\documentclass{amsart}
\usepackage{a4}

\usepackage{hyperref}
\usepackage[italian, english]{babel}

\usepackage{MnSymbol} 
\usepackage{tikz-cd} 

\theoremstyle{definition}
\newtheorem{definition}{Definition}[section]
\newtheorem{theorem}[definition]{Theorem}
\newtheorem{lemma}[definition]{Lemma}
\newtheorem{proposition}[definition]{Proposition}
\newtheorem{corollary}[definition]{Corollary}
\newtheorem{remark}[definition]{Remark}
\newtheorem{code}[definition]{Code}
\newtheorem*{castelnuovo}{Castelnuovo's conjecture}

\newtheorem*{classical}{The classical Enriques-Fano threefold}
\newtheorem*{bayle13}{The Enriques-Fano threefold of genus $\mathbf{13}$ found by Bayle and Sano}
\newtheorem*{pro13}{The Prokhorov-Enriques-Fano threefold of genus $\mathbf{13}$}
\newtheorem*{pro17}{The Prokhorov-Enriques-Fano threefold of genus $\mathbf{17}$}

\usepackage{xcolor} 
\usepackage[font={small,it}]{caption}
\usepackage{enumerate} 

\begin{document}

\title[On Enriques-Fano threefolds and a conjecture of Castelnuovo]{On Enriques-Fano threefolds and a conjecture of Castelnuovo}
\author[V. Martello]{Vincenzo Martello}
\address{Dipartimento di Matematica, Universit\`{a} della Calabria, Arcavacata di Rende (CS)}
\email{vincenzomartello93@gmail.com}


\begin{abstract}
Let $W\subset \mathbb{P}^{13}$ be the image of the rational map defined by the linear system of the sextic surfaces of $\mathbb{P}^3$ having double points along the edges of a tetrahedron. Let $\mathcal{L}$ be the linear system of the hyperplane sections of $W$. It is known that a general $S\in \mathcal{L}$ is an Enriques surface. The aim of this paper is to study the sublinear system $\mathcal{L}_{\bullet}\subset \mathcal{L}$ of the hyperplane sections of $W$ having a triple point at a general point $w \in W$. We will show that a general element of $\mathcal{L}_{\bullet}$ 
is birational to an elliptic ruled surface and that the image of $W$ via the rational map defined by $\mathcal{L}_{\bullet}$ is a cubic Del Pezzo surface $\Delta\subset \mathbb{P}^3$ with $4$ nodes. Interestingly, this fact appears to be related to a conjecture of Castelnuovo.
\end{abstract}

\maketitle

\section{Introduction}

Let $\mathcal{L}$ be an $r$-dimensional linear system of surfaces of $\mathbb{P}^3$ such that the desingularization $\widetilde{S}$ of the general element $S\in \mathcal{L}$ has zero geometric genus $p_g(\widetilde{S})=0$ and zero arithmetic genus $p_a(\widetilde{S})=0$. 
It follows that $\widetilde{S}$ is \textit{regular}, i.e. it has zero irregularity $q(\widetilde{S})=0$. We also recall that a variety is said to be \textit{irregular} if it has positive irregularity; moreover, we will say that a singular variety $X$ has \textit{regular} (respectively, \textit{irregular}) \textit{desingularization} if, for each resolution of singularities $f : \widetilde{X} \to X$, we have $q(\widetilde{X})=0$ (respectively, $q(\widetilde{X})>0$).
What happens if we force the surfaces of $\mathcal{L}$ to have a triple point at a general point of $\mathbb{P}^3$? Castelnuovo \textit{conjectured} in \cite[pp. 187-188]{Ca37} that one gets an $(r-10)$-dimensional sublinear system $\mathcal{L}_{\bullet}\subset \mathcal{L}$ whose general element satisfies one of the following three properties: 
\begin{itemize}
\item[(A)] it is an irreducible surface whose desingularization is an \textit{irregular} surface with zero geometric genus and with arithmetic genus equal to $-1$; 
\item[(B)] it is \textit{reducible} in two rational surfaces intersecting along a rational curve; 
\item[(C)] it has the \textit{same genera} as a general element of $\mathcal{L}$. 
\end{itemize}
Castelnuovo also observed that the first property is impossible if $r>19$. Moreover, he stated that, if $13\le r \le 19$ and if the first property occurs, the image of $\mathbb{P}^3$ via the rational map
defined by $\mathcal{L}_{\bullet}$ is a Del Pezzo surface in $\mathbb{P}^{r-10}$ of degree $3 \le r-10 \le 9$. We will apply the arguments of Castelnuovo to (rational) regular smooth irreducible threefolds
(see \S~\ref{sec: conjecture}). A natural adaptation to (rational) 
threefolds with isolated singularities and with regular desingularization
follows, as we will see in \S~\ref{sec: conjecture singular}. 

An example of such threefolds is the \textit{classical Enriques-Fano threefold}, which was found by Fano in \cite[\S 8]{Fa38} and which is the image of the $13$-dimensional linear system of the sextic surfaces of $\mathbb{P}^3$ having double points along the six edges of a tetrahedron. 
Let $W\subset \mathbb{P}^{13}$ be the classical Enriques-Fano threefold and let $\mathcal{L}$ be the $13$-dimensional linear system of its hyperplane sections. It is known that $W$ is rational, that $W$ has only eight singular points, and that a general $S\in \mathcal{L}$ is an Enriques surface, which is a smooth irreducible surface with $p_g(S) = p_a(S) = 0$.
We will study the sublinear system $\mathcal{L}_{\bullet}\subset \mathcal{L}$ of the hyperplane sections of the classical Enriques-Fano threefold $W$ having a triple point at a general point $w \in W$ (see \S~\ref{sec: classical EF3-fold}). We will find that the general element of this sublinear system satisfies the property (A) conjectured by Castelnuovo. In particular, we will prove the following result.

\begin{theorem}\label{thm:MAINthmCONJ}
Let $(W,\mathcal{L})$ be the classical Enriques-Fano threefold. Let $\mathcal{L}_{\bullet}\subset \mathcal{L}$ be the sublinear system of the hyperplane sections of $W$ having a triple point at a general point $w \in W$. Then
\begin{itemize}
\item[(i)] a general $S_{\bullet}\in \mathcal{L}_{\bullet}$ is birational to an elliptic ruled surface;
\item[(ii)] the image of $W$ via the rational map defined by $\mathcal{L}_{\bullet}$ is a Cayley cubic surface, 
that is a cubic Del Pezzo surface in $\mathbb{P}^{3=13-10}$ with four singular points.  
\end{itemize}
\end{theorem}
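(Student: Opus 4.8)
The plan is to work on a convenient birational model of $W$, namely $\mathbb{P}^3$ itself, via the birational map $\varphi\colon\mathbb{P}^3\dashrightarrow W\subset\mathbb{P}^{13}$ defined by the linear system $\mathcal{S}$ of sextic surfaces of $\mathbb{P}^3$ double along the six edges of the reference tetrahedron $T$. Under this correspondence, $\mathcal{L}_{\bullet}$ pulls back to the sublinear system $\mathcal{S}_{\bullet}\subset\mathcal{S}$ of those sextics that, in addition to being double along the six edges, acquire a triple point at a general point $p\in\mathbb{P}^3$. First I would set up coordinates so that $T$ is the coordinate tetrahedron $\{x_0x_1x_2x_3=0\}$ and describe $\mathcal{S}$ explicitly: its general member is $\sum c_{ijk\ell}\,x_i x_j x_k x_\ell$ where the degree-$6$ monomials are constrained to vanish to order $2$ along each edge $\{x_i=x_j=0\}$; a short combinatorial count recovers $\dim\mathcal{S}=13$. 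Then imposing a triple point at $p$ is $\binom{3+2}{3}=10$ linear conditions, and one must check these are independent for general $p$, giving $\dim\mathcal{S}_{\bullet}=3$, consistent with the map to $\mathbb{P}^{3}$ asserted in (ii). I expect this independence to follow from a direct tangent-space/Jacobian computation or from a dimension count of the image, but it is the first technical point that needs care.

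For part (i), the strategy is to compute the invariants of the desingularization $\widetilde{S_{\bullet}}$ of a general $S_\bullet\in\mathcal{S}_{\bullet}$. A general such sextic has, besides the triple point at $p$ and its prescribed double curves along the edges of $T$, no further unassigned singularities; after blowing up $p$ and resolving the singularities along the edges (following the standard resolution of the classical Enriques-Fano picture, which already computes $K_{\widetilde{S}}$ for general $S\in\mathcal{L}$), one writes the canonical class of $\widetilde{S_\bullet}$ as the pullback of $K_S$ corrected by the discrepancies. The point is that forcing the extra triple point subtracts enough from the (already trivial, for Enriques) canonical bundle that $K_{\widetilde{S_\bullet}}$ becomes anti-effective with $K^2=0$; computing $p_g=0$, $p_a=-1$, hence $q=1$, and then invoking the Enriques–Kodaira classification (a surface with $q=1$, $p_g=0$, $K^2=0$ and $\kappa=-\infty$, or more precisely ruled with these invariants) forces it to be birationally ruled over an elliptic curve. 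Identifying the base curve as elliptic is the crux: I would exhibit the elliptic pencil directly, e.g. by projecting $S_\bullet$ from the triple point $p$ (or from the tangent cone at $p$) and showing the fibers are rational while the base of the fibration is an elliptic curve — the natural candidate being a plane cubic cut out by residual linear conditions. This identification of the ruling is the main obstacle, since one must rule out the base being $\mathbb{P}^1$ (which would contradict $q=1$) and confirm it has genus exactly one.

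For part (ii), I would analyze the rational map $\phi_{\bullet}\colon\mathbb{P}^3\dashrightarrow\mathbb{P}^3$ defined by $\mathcal{S}_{\bullet}$ and show its image $\Delta$ is a cubic surface. Since $\dim\mathcal{S}_\bullet=3$ the image lies in $\mathbb{P}^3$; to get $\deg\Delta=3$ I would compute the degree of $\phi_\bullet$ times $\deg\Delta$ as an intersection number: three general members of $\mathcal{S}_\bullet$ meet, away from the fixed locus (the edges of $T$, counted with multiplicity two, plus the fat point $3p$), in a number of points that one reads off by Bézout, $6^3=216$, minus the contributions of the base locus $(2\cdot\text{edges})$ and of $3p$; the residual intersection number should come out to $3\cdot(\deg\phi_\bullet)$. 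One then argues $\phi_\bullet$ is not birational — indeed generically finite of some degree $d>1$ — and concludes $\deg\Delta=3$; alternatively, and perhaps more cleanly, one observes that the elliptic fibration from part (i) shows that a general fiber of $\phi_\bullet$ is positive-dimensional on each $S_\bullet$, which already pins down the picture, and then a parametric computation exhibits $\Delta$ explicitly. To see that $\Delta$ is the Cayley cubic specifically, I would identify its four nodes: they should arise from the four faces (or four vertices) of the tetrahedron $T$, by the $S_4$-symmetry of the whole configuration, and a cubic surface with exactly four nodes and this symmetry is projectively the Cayley cubic $\sum 1/y_i=0$. Finally one checks $\Delta$ is a (singular) Del Pezzo, i.e. $-K_\Delta$ is ample, which is automatic for a cubic surface in $\mathbb{P}^3$ with only rational double points. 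I expect the explicit identification with the Cayley cubic — rather than merely "a $4$-nodal cubic" — to require either the symmetry argument above or an honest parametrization, and to be the second genuinely delicate point.
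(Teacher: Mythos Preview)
Your overall setup --- pulling $\mathcal{L}_\bullet$ back to $\mathbb{P}^3$ and working with the sublinear system $\mathcal{S}_\bullet\subset\mathcal{S}$ of sextics triple at a general $p$ --- is exactly what the paper does. But there are two genuine gaps.

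For (ii), your primary approach cannot work: $\phi_\bullet$ is \emph{not} generically finite. The image $\Delta$ is a surface, so a general fibre of $\nu_\bullet\colon\mathbb{P}^3\dashrightarrow\mathbb{P}^{r_\bullet}$ is a curve, and three general members of $\mathcal{S}_\bullet$ meet, outside the base locus, along a curve rather than at finitely many points; the count ``$6^3$ minus base contributions equals $(\deg\phi_\bullet)\cdot\deg\Delta$'' is therefore meaningless. You half-recognize this in your ``alternatively'' clause, but that remains a sketch. The paper's route is different and is the key technical idea you are missing: one first shows that the general fibre of $\nu_\bullet$ is a nodal plane cubic $\gamma_\alpha$ (on a plane $\alpha\ni p$), so $\Delta$ is a surface; then one restricts $\mathcal{S}_\bullet$ to a general plane $\alpha$ through $p$, obtaining the planar linear system of sextics triple at $p$ and double at the six vertices of the complete quadrilateral $\alpha\cap T$, with $\nu_\bullet|_\alpha$ generically $1{:}1$ onto $\Delta$. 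Three successive standard quadratic Cremona transformations convert this planar system into the system of cubics through the six vertices of a complete quadrilateral, whose image is by definition the Cayley cubic. This single computation simultaneously delivers $r_\bullet=3$, $\deg\Delta=3$, and the identification of $\Delta$; the four nodes are then traced back to the four faces of $T$.

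Second, you have missed part of the base locus of $\mathcal{S}_\bullet$: besides the six edges of $T$ and the fat point at $p$, there are three further simple base lines $r_1,r_2,r_3$ through $p$, each meeting a pair of opposite edges (forced by B\'ezout on the line, which otherwise would meet a sextic in $3+2+2>6$ points). These lines enter the resolution in (i): the paper's explicit smooth model $\Sigma_\bullet''$ is obtained by blowing up the vertices, $p$, the six points $r_i\cap(\text{edges})$, and then the strict transforms of the edges \emph{and} of the $r_i$; without them your proposed desingularization is incomplete. Once this is done, the computation of $p_g=0$, $q=1$, $p_a=-1$ via adjunction on the blow-up and the appeal to Enriques--Kodaira are as you outline. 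For the elliptic ruling, however, the paper does not project from $p$: it uses $\nu_\bullet$ itself. A general $\Sigma_\bullet$ maps onto a general hyperplane section $\Gamma$ of the Cayley cubic $\Delta$, which is a smooth elliptic plane cubic, and the fibres over $\Gamma$ are the rational curves $\gamma_\alpha$; this exhibits $\Sigma_\bullet''\to\Gamma$ as the required ruling over an elliptic curve.
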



More generally, an Enriques-Fano threefold is a normal threefold $W$ endowed with a complete linear system $\mathcal{L}$ of ample Cartier divisors such that the general element $S\in \mathcal{L}$ is an Enriques surface and such that $W$ is not a generalized cone over $S$. Every Enriques-Fano threefold is known to be singular with isolated singular points (see \cite{Ch96} and \cite[Lemma 3.2]{CoMu85}) and to have regular desingularization (see \cite[Lemma 4.1]{CDGK20}); nevertheless, the classification of these objects still remains an open question. 

In \S~\ref{sec:EFGENUSgreater13} we will show that the Enriques-Fano threefold found by Bayle in \cite[\S 6.3.2]{Ba94} (and also by Sano in \cite[Theorem 1.1 No.14]{Sa95}) coincides with the classical Enriques-Fano threefold: we will do this thanks to a computational analysis with the software Macaulay2. 
Finally, let $(W,\mathcal{L})$ be the Enriques-Fano threefold found by Prokhorov in \cite[Proposition 3.2]{Pro07}, which we will refer to as \textit{Prokhorov-Enriques-Fano threefold of genus 17}: we will see that also by imposing a triple point at the general element of $\mathcal{L}$, one obtains a surface whose desingularization has $q=1$, $p_g=0$ and $p_a=-1$. Indeed, in \S~\ref{sec: generalization}, we will prove the following result. 

\begin{corollary}\label{cor:castelnuovop17}
Let $(W,\mathcal{L})$ be the Prokhorov-Enriques-Fano threefold of genus $17$. Let $\mathcal{L}_{\bullet}\subset \mathcal{L}$ be the sublinear system given by the elements of $\mathcal{L}$ with a triple point at a general point $w\in W$. Then a general element $S_{\bullet}\in \mathcal{L}_{\bullet}$ is birational to an elliptic ruled surface.
\end{corollary}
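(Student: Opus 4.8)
The plan is to run the same analysis used for Theorem~\ref{thm:MAINthmCONJ} — and sketched in general in \S~\ref{sec: conjecture singular} — but now on the Prokhorov--Enriques--Fano threefold of genus $17$. Concretely, I would first recall an explicit projective model of $(W,\mathcal{L})$ from \cite[Proposition 3.2]{Pro07}, identify the nature and position of the (isolated) singularities of $W$, and pick a general smooth point $w\in W$ in the smooth locus. The linear system $\mathcal{L}$ has $\dim\mathcal{L}=r$ with $r=17$ in Prokhorov's normalization (so the projective space is $\mathbb{P}^{17}$ and $\mathcal{L}$ is the hyperplane-section system); imposing a triple point at $w$ means imposing the vanishing of all first and second partials, i.e. $\binom{2+3-1}{3}=\binom{4}{3}=10$ conditions on sections. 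The first step is therefore to check that these $10$ conditions are independent on $\mathcal{L}$, so that $\dim\mathcal{L}_{\bullet}=r-10=7$, exactly as in the numerology Castelnuovo predicts for $13\le r\le 19$.

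Next I would compute the birational invariants of a general $S_{\bullet}\in\mathcal{L}_{\bullet}$. Blow up $W$ at $w$ to resolve the imposed triple point: let $\pi:\widetilde{W}\to W$ be the blow-up with exceptional divisor $E\cong\mathbb{P}^2$, and let $\widetilde{S}_{\bullet}$ be the strict transform of $S_{\bullet}$, so that $\pi^*S_{\bullet}=\widetilde{S}_{\bullet}+3E$ (locally, after possibly further resolving the singularities of $S_{\bullet}$ itself, but a general such surface should have a triple point as its worst singularity over $w$, with a smooth or mildly singular strict transform whose desingularization $\widetilde{S}_{\bullet}$ we track). The triple curve/plane section $\widetilde{S}_{\bullet}\cap E$ is a plane cubic $C\subset E\cong\mathbb{P}^2$, which is elliptic for general $w$. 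Adjunction on $\widetilde{W}$ gives $K_{\widetilde{W}}=\pi^*K_W+2E$, and since a general $S\in\mathcal{L}$ is an Enriques surface we have $K_W+S\sim_{\mathbb{Q}}$ (something with $\kappa=0$ restricted to $S$, in fact $2K_W\sim -2S$ mod torsion by the Enriques adjunction $K_S=(K_W+S)|_S=0$). Plugging in $\widetilde{S}_{\bullet}\sim\pi^*S-3E$ and using $K_{\widetilde{S}_{\bullet}}=(K_{\widetilde{W}}+\widetilde{S}_{\bullet})|_{\widetilde{S}_{\bullet}}$ yields $K_{\widetilde{S}_{\bullet}}=(\pi^*K_W+2E+\pi^*S-3E)|_{\widetilde{S}_{\bullet}}=(\pi^*(K_W+S)-E)|_{\widetilde{S}_{\bullet}}$, so $K_{\widetilde{S}_{\bullet}}=-C$ up to numerical equivalence (as $(K_W+S)|_S$ is numerically trivial), an anti-effective class equal to minus an elliptic curve. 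This forces $p_g(\widetilde{S}_{\bullet})=h^0(K_{\widetilde{S}_{\bullet}})=0$ and $p_a(\widetilde{S}_{\bullet})=-C\cdot(K+C)/2-\ldots$; carefully $\chi(\mathcal{O}_{\widetilde{S}_{\bullet}})=1+p_a$, and a short computation with the exact sequence $0\to\mathcal{O}_{\widetilde{S}_{\bullet}}(K)\to\mathcal{O}_{\widetilde{S}_{\bullet}}\to\mathcal{O}_C\to 0$ (using $-K_{\widetilde{S}_{\bullet}}=C$) gives $\chi(\mathcal{O}_{\widetilde{S}_{\bullet}})=\chi(\mathcal{O}_C)=0$, hence $p_a=-1$, and then $q=1-\chi+p_g=0-0+? $ — one reads off $q(\widetilde{S}_{\bullet})=1$. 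So $\widetilde{S}_{\bullet}$ is a minimal-model-candidate for a surface with $q=1$, $p_g=0$; with $K^2\le 0$ (it is $C^2<0$ on the blown-down model after contracting the extra $(-1)$-curves) one concludes $\kappa=-\infty$, and the only irregular surface of negative Kodaira dimension is a ruled surface over a curve of genus $q=1$, i.e. an elliptic ruled surface. That is exactly the stated conclusion.

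The last thing to pin down — and this is where I expect the real work — is the passage from the numerical computation to the \emph{geometric} statement that $\widetilde{S}_{\bullet}$ is actually irreducible and birationally an elliptic ruled surface, rather than, say, reducible (Castelnuovo's alternative (B)) or having the Enriques genera (alternative (C)). For the classical genus-$13$ case this was settled in \S~\ref{sec: classical EF3-fold}, and the corollary should follow by transporting that argument: one needs to verify that the general $S_{\bullet}$ through the imposed triple point is irreducible (ruling out (B)) and that the elliptic curve $C=E\cap\widetilde{S}_{\bullet}$ genuinely moves / is not contracted to a point (ruling out (C), where the surface would keep $q=p_g=0$), which amounts to showing the image of $W$ under the rational map $\varphi_{\mathcal{L}_{\bullet}}:W\dashrightarrow\mathbb{P}^{7}$ has dimension $2$ and that the fibers of $\varphi_{\mathcal{L}_{\bullet}}|_{S_{\bullet}}$ are the rulings, i.e. rational curves. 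Here I would either run the same Macaulay2 computation already used in \S~\ref{sec:EFGENUSgreater13} to handle the Bayle--Sano model (computing $\mathcal{L}_{\bullet}$ explicitly, its dimension, the image, and the invariants of $S_{\bullet}$), or invoke Prokhorov's explicit description of $(W,\mathcal{L})$ to exhibit the linear system concretely. The main obstacle is precisely making sure that generality of $w$ guarantees $S_{\bullet}$ behaves as expected — that no unexpected base locus or extra singularity of $S_{\bullet}$ perturbs the adjunction bookkeeping above — which for the genus-$17$ threefold I would confirm computationally, mirroring the treatment of the genus-$13$ case.
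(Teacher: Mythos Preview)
Your approach is genuinely different from the paper's, and in one important respect it overshoots what the corollary asserts while in another it contains a computational slip.

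The paper does \emph{not} redo the analysis of \S\ref{sec: classical EF3-fold} on $W_P^{17}$. Instead it reduces directly to Theorem~\ref{thm:MAINthmCONJ} via two facts: every Enriques surface is birational to some sextic $\Sigma\in\mathcal{S}$ (i.e.\ to a hyperplane section of the classical Enriques--Fano threefold), and a general $S\in\mathcal{L}$ on $W_P^{17}$ is a \emph{general} Enriques surface (this is the content of the citation to \cite[Proposition 4.7]{CDGK20}). Consequently a general $S_\bullet\in\mathcal{L}_\bullet$ is birational to a hyperplane section of the classical Enriques--Fano threefold with a triple point at a general point, and Theorem~\ref{thm:MAINthmCONJ}(i) finishes. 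That is the whole argument; no direct computation on $W_P^{17}$ is performed, and in particular the paper explicitly leaves $\dim\mathcal{L}_\bullet=7$ as an \emph{open question} (see the end of \S\ref{sec: generalization}), so your ``first step'' is not available and is in any case not needed for the corollary as stated.

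Your direct approach could in principle be made to work, and would yield more (the dimension of $\mathcal{L}_\bullet$, the image surface in $\mathbb{P}^7$), but as written it has a gap: the exact sequence $0\to\mathcal{O}_{\widetilde S_\bullet}(K)\to\mathcal{O}_{\widetilde S_\bullet}\to\mathcal{O}_C\to 0$ does not compute $\chi(\mathcal{O}_{\widetilde S_\bullet})$. Taking Euler characteristics and using Serre duality $\chi(K)=\chi(\mathcal{O})$ only returns the tautology $\chi(\mathcal{O}_C)=0$; you would need a genuinely different computation (e.g.\ comparing $\chi(\mathcal{O}_{\widetilde W}(-\pi^*S))$ with $\chi(\mathcal{O}_{\widetilde W}(-\pi^*S+3E))$, as in the spirit of Remark~\ref{rem:classicalcase} and Theorem~\ref{thm:propertiesSigma*}) to extract $q=1$. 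Also, minor: $\binom{4}{3}=4$, not $10$; the correct count of conditions for a triple point in a threefold is $\binom{3+2}{3}=10$. The remaining verifications you flag (irreducibility of $S_\bullet$, smoothness of $\widetilde S_\bullet$, that the image of $\phi_{\mathcal{L}_\bullet}$ is a surface) are exactly the laborious part the paper's birational reduction sidesteps.
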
 

In Appendix~\ref{app:code} we will collect the input codes used in Macaulay2.
We will work over the field $\mathbb{C}$ of the complex numbers, but, for the computational analysis, we will work over a finite field: we will choose $\mathbb{F}_n := \mathbb{Z}/n\mathbb{Z}$ with $n=10000019$.

\subsection*{Acknowledgment}
The results of this paper are contained in my PhD-thesis. I would like to thank my main advisors C. Ciliberto and C. Galati and my co-advisor A.L. Knutsen for our stimulating conversations and for providing me very useful suggestions.
I would also like to acknowledge PhD-funding from the Department of Mathematics and Computer Science of the University of Calabria. 

%

\section{Castelnuovo's conjecture for smooth threefolds}\label{sec: conjecture}

In \cite[pp.187-188]{Ca37}, Castelnuovo proposed some ideas about certain irreducible threefolds and particular linear systems of surfaces on them. In order to explain these ideas, 
we start by talking about the
link between the irregularity of a surface contained in a threefold and the one of the threefold itself, which was studied in \cite[\S 4]{CE1906}.

\begin{proposition}\label{prop:C-E qW=qS}
Let $W$ be a smooth irreducible threefold endowed with an $r$-dimensional linear system $\mathcal{L}$ such that $r\ge 2$ and such that the general element is 
an irreducible surface. If the divisors of $\mathcal{L}$ are big and nef, then $W$ has the same irregularity as a general surface $S\in \mathcal{L}$.
\end{proposition}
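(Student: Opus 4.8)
The plan is to read off the irregularity $q(W)=h^{1}(W,\mathcal{O}_{W})$ directly from that of a general $S\in\mathcal{L}$ by means of the structure sequence of $S$ inside $W$. By hypothesis a general $S$ is reduced and irreducible, hence a prime Cartier divisor on the smooth threefold $W$, and by Bertini's theorem it is smooth (away from the base locus of $\mathcal{L}$, which is immaterial for the identity below). Tensoring the short exact sequence
\[
0\longrightarrow \mathcal{O}_{W}(-S)\longrightarrow \mathcal{O}_{W}\longrightarrow \mathcal{O}_{S}\longrightarrow 0
\]
and passing to cohomology, one extracts the exact strand
\[
H^{1}(W,\mathcal{O}_{W}(-S))\longrightarrow H^{1}(W,\mathcal{O}_{W})\longrightarrow H^{1}(S,\mathcal{O}_{S})\longrightarrow H^{2}(W,\mathcal{O}_{W}(-S)),
\]
so everything reduces to proving $H^{1}(W,\mathcal{O}_{W}(-S))=H^{2}(W,\mathcal{O}_{W}(-S))=0$.

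For these two vanishings I would use Serre duality on the smooth projective threefold $W$, which gives $H^{i}(W,\mathcal{O}_{W}(-S))\cong H^{3-i}(W,\omega_{W}\otimes\mathcal{O}_{W}(S))^{\vee}$. Since the divisor $S$ is big and nef and we work over $\mathbb{C}$, the Kawamata--Viehweg vanishing theorem yields $H^{j}(W,\omega_{W}\otimes\mathcal{O}_{W}(S))=0$ for every $j>0$; taking $j=2$ and $j=1$ gives exactly $H^{1}(W,\mathcal{O}_{W}(-S))=0$ and $H^{2}(W,\mathcal{O}_{W}(-S))=0$. Feeding this into the exact strand produces an isomorphism $H^{1}(W,\mathcal{O}_{W})\cong H^{1}(S,\mathcal{O}_{S})$; since $S$ is smooth the right-hand side is $q(S)$, and we conclude $q(W)=q(S)$.

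I expect the only genuine point of care to be the legitimacy of the vanishing step, not any calculation: Kawamata--Viehweg needs characteristic zero (fine, as $W$ is complex) and bigness together with nefness of $S$ (which is precisely the hypothesis), so no base-point-freeness of $\mathcal{L}$ is needed. If instead one prefers to stay close to the classical argument of \cite[\S 4]{CE1906} --- natural here, the section being a revisitation of Castelnuovo's ideas --- one can argue on holomorphic $1$-forms: the restriction map $H^{0}(W,\Omega^{1}_{W})\to H^{0}(S,\Omega^{1}_{S})$ is injective because the members of $\mathcal{L}$ sweep out $W$, and surjective because a $1$-form on a general $S$ spreads out over the (at least $2$-dimensional, whence the hypothesis $r\ge 2$) family $\mathcal{L}$ to a global $1$-form on $W$ restricting to it; then $q=h^{0}(\Omega^{1})$ on both sides by Hodge theory. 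I would nonetheless present the cohomological proof as the main one, since it is shorter and avoids the delicate spreading-out step, which is the part most likely to hide a subtlety.
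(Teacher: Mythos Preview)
Your main argument is correct and is exactly the paper's proof: the structure sequence $0\to\mathcal{O}_W(-S)\to\mathcal{O}_W\to\mathcal{O}_S\to 0$ together with Kawamata--Viehweg vanishing for the big and nef divisor $S$ gives $h^{1}(\mathcal{O}_W(-S))=h^{2}(\mathcal{O}_W(-S))=0$, hence $q(W)=q(S)$. The paper states the vanishing directly for $\mathcal{O}_W(-S)$ without spelling out the Serre-duality step and does not mention the alternative $1$-forms argument, but the substance is identical.
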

\begin{proof}
Let $S$ be a general element of $\mathcal{L}$ and let us take the 
following 
exact sequence
$$0\to \mathcal O_W(-S)\to  \mathcal O_W\to \mathcal O_S\to 0.$$
Since $S$ is a big and nef divisor, then we have that
$h^{i=1,2}(\mathcal O_W(-S) )=0$ by the Kawamata-Viehweg vanishing theorem and therefore we obtain $q(W) = q(S)$.
\end{proof}

\begin{remark}\label{rem:basecurves}
Let $W$ be a smooth irreducible threefold and let $\mathcal{L}$ be a linear system on $W$ such that $\dim \mathcal{L}\ge 2$ and such that the general element is a smooth irreducible surface. Let us suppose that $\mathcal{L}$ has \textit{base curves}. One can take appropriate blow-ups of $W$ along these curves 
until one gets 
a birational morphism $\overline{bl} : \overline{W} \to W$ such that the strict transform $\overline{\mathcal{L}}$ of $\mathcal{L}$ has no base curves. Let $\overline{S}$ be an 
element of $ \overline{\mathcal{L}}$. It follows that $\overline{S}$ is a nef divisor. Furthemore $\overline{S}$ is the strict transform of an element $S\in\mathcal{L}$. If $S$ is a big divisor, then $\overline{S}$ is big too.
\end{remark}

Some consequences of Proposition~\ref{prop:C-E qW=qS} are stated in \cite[\S 6]{CE1906} for the $3$-dimensional projective space $W = \mathbb{P}^{3}$. We will adapt them for any regular smooth irreducible threefold. 

\begin{proposition}\label{prop:C-E Sregular}
Let $W$ be a regular smooth irreducible threefold endowed with an $r$-dimensional linear system $\mathcal{L}$ such that $r\ge 3$ and such that the general element is a smooth irreducible surface. If the intersection of two general surfaces of $\mathcal{L}$, outside the base locus, is an irreducible curve, then a general element $S\in \mathcal{L}$ is a regular surface.
\end{proposition}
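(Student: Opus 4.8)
The plan is to combine Proposition~\ref{prop:C-E qW=qS} with a transversality/irreducibility argument applied to the intersection of two general members of $\mathcal{L}$. Since $W$ is regular, we have $q(W)=0$; so if we can arrange that the hypotheses of Proposition~\ref{prop:C-E qW=qS} hold for $\mathcal{L}$ (or for a suitable modification of it), we immediately get $q(S)=q(W)=0$, which is exactly the assertion. The subtlety is that Proposition~\ref{prop:C-E qW=qS} requires the members of $\mathcal{L}$ to be big and nef, whereas here $\mathcal{L}$ is only assumed to have $r\ge 3$, a smooth irreducible general member, and the property that two general members meet in an irreducible curve outside the base locus. So the first step is to reduce to the big and nef case. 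If $\mathcal{L}$ has base curves, I would invoke Remark~\ref{rem:basecurves}: pass to a model $\overline{bl}:\overline W\to W$ on which the strict transform $\overline{\mathcal{L}}$ is base-point-free in codimension one, hence nef, and note that $\overline W$ is still smooth irreducible and, being birational to $W$, still regular; moreover $\overline S\cong S$ for general $S$ (the general member avoids the centers of blow-up away from the base curves, and blowing up along a curve contained in $S$ replaces $S$ by an isomorphic strict transform when the curve is a base curve of the pencil), so it suffices to prove $q(\overline S)=0$.

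Next I would establish that $\overline{S}$ (equivalently a general member of $\overline{\mathcal{L}}$) is big. The idea is that $\dim\overline{\mathcal{L}}=r\ge 3$ and the general member is irreducible, so the linear system is not composed with a pencil; combined with nefness this should force $\overline S$ to be big — concretely, $\overline S^{3}>0$ or at least $\overline S$ is not numerically trivial on any surface, which for a nef divisor giving an irreducible non-pencil system yields bigness. Here the hypothesis on the intersection of two general surfaces is what does the work: two general members of $\overline{\mathcal{L}}$ meet properly in a curve (not a divisor), and that curve is irreducible; this rules out $\overline S$ being composed with a pencil and gives the positivity needed for bigness. Once $\overline S$ is big and nef, Proposition~\ref{prop:C-E qW=qS} applied to $(\overline W,\overline{\mathcal{L}})$ gives $q(\overline S)=q(\overline W)=0$, and pulling back to $S$ finishes the proof.

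I expect the main obstacle to be the clean deduction of bigness of the general member from the combination ``$r\ge 3$ + general member irreducible + two general members meet in an irreducible curve''. One has to be careful that the intersection being an \emph{irreducible} curve (rather than merely a curve) is genuinely used: it prevents $\overline{\mathcal{L}}$ from being composed with a pencil and, after the base-curve blow-ups, pins down that the moving part has top self-intersection positive. A secondary technical point is checking that the passage to $\overline W$ in Remark~\ref{rem:basecurves} really does preserve the isomorphism class of the general surface and the ``irreducible curve of intersection'' hypothesis; this should be routine, since the general $\overline S$ meets the general $\overline S'$ along the strict transform of the curve $S\cap S'$ plus possibly exceptional components, and genericity lets us ignore the latter. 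Modulo these points, the argument is a short reduction to the already-proven Proposition~\ref{prop:C-E qW=qS}.
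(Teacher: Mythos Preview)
Your plan is correct and is essentially the paper's own argument, just recast contrapositively: the paper assumes a general $S$ is irregular, invokes Proposition~\ref{prop:C-E qW=qS} to conclude that $S$ cannot be big, and then shows (by analyzing the image $\Delta$ of $\phi_{\mathcal L}$, which must then be a surface of degree $\ge 2$ in $\mathbb{P}^r$ since $r\ge 3$) that two general members meet in $\deg\Delta\ge 2$ fibres of $\phi_{\mathcal L}$, contradicting the irreducibility hypothesis. This is precisely the ``clean deduction of bigness'' you flag as the main obstacle, so to finish your direct version you need only run that same analysis of $\Delta$.
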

\begin{proof}
We may assume that the base locus of $\mathcal{L}$ is empty or at worst a finite set. Indeed, if this were not the case, the proof could proceed with the pair $(\overline{W}, \overline{\mathcal{L}})$ of Remark~\ref{rem:basecurves} instead of the pair $(W,\mathcal{L})$. This is possible since a general $\overline{S}\in \overline{\mathcal{L}}$ is a smooth surface isomorphic to a general $S\in \mathcal{S}$ and such that $q(\overline{S})=q(S)$.

Let us now fix a general $S\in\mathcal{L}$, which is a nef divisor,
and
let us suppose that $S$ is an irregular surface. Let $\Delta\subseteq \mathbb{P}^{r}$ be the image of the rational map $\phi_{\mathcal{L}} : W \dashrightarrow \mathbb{P}^{r}$ defined by $\mathcal{L}$. Since $r\ge 3>1$ and $S$ is 
irreducible 
by hypothesis, then $\dim \Delta >1$. 
Furthermore, we have that $\dim \Delta < 3$, since $S$ is not a big divisor (see Proposition~\ref{prop:C-E qW=qS}). 
So $\Delta$ is a surface and the general fibre of $\phi_{\mathcal{L}}$ is a curve. 
Let $S'$ be another general element of $\mathcal{L}$. The intersection curve $S\cap S'$ is sent by $\phi_{\mathcal{L}}$ to the intersection of two general hyperplane sections of $\Delta$, which is a set of $d:=\deg \Delta$ points of $\Delta$. We observe that $\Delta$ cannot be a plane, since $r\ge 3>2$. Hence we have that $S\cap S'$ is a reducible curve given by $d\ge 2$ fibres of $\phi_{\mathcal{L}}$. Since this is a contradiction with the hypothesis, then $S$ must be regular.
\end{proof}


We recall that a one-dimensional linear system on a variety is called \textit{pencil}. 
Let us
extend the use of this term.
Let $S$ be a smooth surface and let $B$ be a smooth curve of genus $b\ge 0$. A surjective rational map $f : S \dashrightarrow B$ with connected fibres is called \textit{pencil of genus} $b$ of curves on $S$. All the curves of such a pencil are linearly equivalent if and only if $b = 0$. In this case we will refer to it as \textit{rational pencil}. If $b>0$, we will talk about \textit{irrational pencil} and, in this case, $f$ is a morphism (see \cite[p.114]{BPV84}). In particular, an irrational pencil of genus one is called \textit{elliptic pencil}.


\begin{definition}\label{def:congruence}
A \textit{congruence of curves} of a threefold $W$ is a two-dimensional irreducible family $\mathcal{V}$ of curves
contained in $W$ such that only one curve of the family passes through a general point of $W$.
\end{definition}

\begin{proposition}\label{prop:C-E L*}
Let $W$ be a regular smooth irreducible threefold endowed with an $r_{\bullet}$-dimensional linear system $\mathcal{L}_{\bullet}$, where $r_{\bullet}\ge 3$, such that the general element is an irregular smooth irreducible surface. Then two general elements $S_{\bullet}$ and $S_{\bullet}'$ of $\mathcal{L}_{\bullet}$ intersect each other (outside the base locus) along a reducible curve. In particular, on a fixed $S_{\bullet}$, the components of such a curve are fibres of a pencil of genus $b$, where $0\le b \le q(S_{\bullet})$. Furthermore, by moving the surface $S_{\bullet}$, these component curves give a congruence $\mathcal{V}$ of curves of $W$.
\end{proposition}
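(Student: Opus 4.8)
The plan is to deduce the reducibility of the intersection from the contrapositive of Proposition~\ref{prop:C-E qW=qS}, reusing the fibre-counting argument already carried out in the proof of Proposition~\ref{prop:C-E Sregular}, and then to read off both the pencil on a fixed surface and the global congruence directly from the fibration $\phi_{\mathcal{L}_{\bullet}}$. First I would pass, exactly as in the proof of Proposition~\ref{prop:C-E Sregular}, to the pair $(\overline{W},\overline{\mathcal{L}}_{\bullet})$ of Remark~\ref{rem:basecurves}, so that one may assume that the base locus of $\mathcal{L}_{\bullet}$ is at worst a finite set and that $S_{\bullet}$ is a nef divisor; this changes neither a general $S_{\bullet}$ up to isomorphism nor its irregularity $q(S_{\bullet})$. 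Since $W$ is regular while $S_{\bullet}$ is irregular, Proposition~\ref{prop:C-E qW=qS} prevents $S_{\bullet}$ from being big, so the image $\Delta\subseteq\mathbb{P}^{r_{\bullet}}$ of $\phi_{\mathcal{L}_{\bullet}}$ has $\dim\Delta<3$; on the other hand $\dim\Delta>1$, because otherwise, $\Delta$ being nondegenerate in $\mathbb{P}^{r_{\bullet}}$ with $r_{\bullet}\ge 3$, the general $S_{\bullet}=\phi_{\mathcal{L}_{\bullet}}^{-1}(H)$ would split into $\deg\Delta\ge 2$ fibres, contradicting its irreducibility. Hence $\Delta$ is a nondegenerate surface in $\mathbb{P}^{r_{\bullet}}$, in particular not a plane, so $d:=\deg\Delta\ge 2$ and the general fibre of $\phi_{\mathcal{L}_{\bullet}}$ is a curve.

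Next, arguing as in the proof of Proposition~\ref{prop:C-E Sregular}, two general members $S_{\bullet}$ and $S_{\bullet}'$ satisfy that $\phi_{\mathcal{L}_{\bullet}}(S_{\bullet}\cap S_{\bullet}')$ equals the intersection of two general hyperplane sections of $\Delta$, that is a set of $d\ge 2$ reduced points; therefore, outside the base locus, $S_{\bullet}\cap S_{\bullet}'$ is the union of $d\ge 2$ fibres of $\phi_{\mathcal{L}_{\bullet}}$, and in particular it is reducible. To obtain the pencil I would take a resolution on which $\phi_{\mathcal{L}_{\bullet}}$ becomes a morphism together with its Stein factorization $W\dashrightarrow\widehat{\Delta}\to\Delta$; restricting to (the smooth model of) a fixed general $S_{\bullet}$ and, if necessary, Stein-factorizing once more, one gets a pencil $f:S_{\bullet}\dashrightarrow C$ of genus $b:=g(C)$ whose fibres over the $d$ points of $\phi_{\mathcal{L}_{\bullet}}(S_{\bullet}\cap S_{\bullet}')$ are precisely the component curves of $S_{\bullet}\cap S_{\bullet}'$. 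For the bound on $b$: if $b=0$ there is nothing to prove, while if $b>0$ the pencil is irrational, so $f$ is a morphism with $f_{*}\mathcal{O}_{S_{\bullet}}=\mathcal{O}_{C}$, and the low-degree exact sequence of the Leray spectral sequence gives an injection $H^{1}(C,\mathcal{O}_{C})\hookrightarrow H^{1}(S_{\bullet},\mathcal{O}_{S_{\bullet}})$, whence $b\le q(S_{\bullet})$.

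Finally, to exhibit the congruence, note that the fibres of $\phi_{\mathcal{L}_{\bullet}}$ (equivalently, of $W\dashrightarrow\widehat{\Delta}$) form a family $\mathcal{V}$ parametrized by the irreducible surface $\widehat{\Delta}$, hence a two-dimensional irreducible family of curves of $W$, and through a general point $w\in W$ there passes exactly one member of $\mathcal{V}$, namely the fibre over $\phi_{\mathcal{L}_{\bullet}}(w)$; so $\mathcal{V}$ is a congruence in the sense of Definition~\ref{def:congruence}. Since $r_{\bullet}\ge 3$, the members of $\mathcal{L}_{\bullet}$ through a general $w\in W$ form a positive-dimensional subsystem, so $w$ lies on $S_{\bullet}\cap S_{\bullet}'$ for suitable general $S_{\bullet},S_{\bullet}'$, and the curve of $\mathcal{V}$ through $w$ is one of the component curves of that intersection; thus, letting $S_{\bullet}$ vary in $\mathcal{L}_{\bullet}$, the component curves so obtained are exactly the general members of $\mathcal{V}$, which proves the last assertion.

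I expect the main technical obstacle to be the honest construction of the pencil $f$ on $S_{\bullet}$ — keeping track of the two Stein factorizations so that its fibres are genuinely the components of $S_{\bullet}\cap S_{\bullet}'$ and so that $f$ is a bona fide pencil (connected fibres) — together with the verification that its base genus is bounded by $q(S_{\bullet})$ and not by some larger invariant. By contrast, the reducibility of the intersection is essentially the same fibre count as in Proposition~\ref{prop:C-E Sregular}, and the remaining assertions are then formal.
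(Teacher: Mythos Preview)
Your argument is correct and follows essentially the same route as the paper: reduce the base locus, use Proposition~\ref{prop:C-E qW=qS} to force the image $\Delta$ to be a surface, split $S_\bullet\cap S_\bullet'$ into $\deg\Delta\ge 2$ fibres of $\phi_{\mathcal{L}_\bullet}$, and read off both the pencil and the congruence from that fibration. The only differences are in the two technical steps you yourself flag: the paper goes further and arranges $\mathcal{L}_\bullet$ to be base-point free (so $\phi_\bullet$ is an honest morphism), then obtains the pencil on $S_\bullet$ simply by factoring $\phi_\bullet|_{S_\bullet}:S_\bullet\to\Gamma$ through the normalization $B\to\Gamma$ of the image curve (rather than via Stein factorization), and bounds $b=g(B)$ by the pullback injection $H^0(\Omega^1_B)\hookrightarrow H^0(\Omega^1_{S_\bullet})$ rather than via the Leray sequence.
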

\begin{proof}
We may assume that $\mathcal{L}_{\bullet}$ is base point free. Indeed, if $\mathcal{L}_{\bullet}$ had base curves, then we could take the pair $(\overline{W}, \overline{\mathcal{L}}_{\bullet})$ as in Remark~\ref{rem:basecurves}, where $\overline{\mathcal{L}}_{\bullet}$ has no base curves. If $\overline{\mathcal{L}}_{\bullet}$ still had a finite set of base points, then we could blow-up such points until we get a birational morphism $\widetilde{bl}: \widetilde{W} \to \overline{W}$ such that the strict transform $\widetilde{\mathcal{L}}_{\bullet}$ of $\overline{\mathcal{L}}_{\bullet}$ is base point free. Thus, we could continue the proof by denoting the pair $(\widetilde{W}, \widetilde{\mathcal{L}}_{\bullet})$ by $(W,\mathcal{L}_{\bullet})$: this is possible since a general surface $\widetilde{S}_{\bullet}\in\widetilde{\mathcal{L}}_{\bullet}$ is birational to a general surface $S_{\bullet}\in \mathcal{L}_{\bullet}$ and they have the same irregularity.

Therefore, since 
the divisors of $\mathcal{L}_{\bullet}$ are 
not big (see Proposition~\ref{prop:C-E qW=qS}), then the image of the morphism $\phi_{\bullet} : W \to \mathbb{P}^{r_{\bullet}}$ defined by $\mathcal L_{\bullet}$ is not a threefold. Moreover, since $r_{\bullet}\ge 3>1$ and the elements of $\mathcal{L}_{\bullet}$ are generically irreducible, then $\phi_{\bullet} (W)$ is not even a curve. The image of $W$ via $\phi_{\bullet}$ is thus a surface $\Delta$ and a general $S_{\bullet} \in \mathcal L_{\bullet}$ is sent via $\phi_{\bullet}$ to a curve $\Gamma$, which is a general hyperplane section of $\Delta$. 
Since $S_{\bullet}$ is smooth, the morphism $\phi_{\bullet}|_{S_{\bullet}} : S_{\bullet} \to \Gamma$ factorizes via the normalization $n: B \to \Gamma$ of $\Gamma$, i.e. there exists a morphism $\psi : S_{\bullet} \to B$ such that $\phi_{\bullet}|_{S_{\bullet}} = n \circ \psi$. Furthermore,
the fibres of $\psi : S_{\bullet} \to B$ are generically equal to the ones of $\phi_{\bullet}|_{S_{\bullet}} : S_{\bullet} \to \Gamma$.
By Proposition~\ref{prop:C-E Sregular} we have that the curves on $S_{\bullet}$ given by the intersection with other general elements of $\mathcal{L}_{\bullet}$ are reducible; in particular the components of such curves are fibres of the map $\phi_{\bullet} |_{S_{\bullet}} : S_{\bullet} \to \Gamma$.
We observe that $0\le b:=p_g(B) = p_g(\Gamma)\le q(S_{\bullet})$, since we have the injection $H^0( \Omega_{\Gamma}^{1} ) \hookrightarrow  H^0(\Omega_{S_{\bullet}}^{1})$.
Finally, by moving the surface $S_{\bullet}$, we obtain that the fibres of the morphism $\phi_{\bullet} : W \to \Delta \subset \mathbb{P}^{r_{\bullet}}$ give a two dimensional family $\mathcal{V}$ such that through a general point $w\in W$ only one curve of the family passes, that is $\phi_{\bullet}^{-1}(\phi_{\bullet} (w))$.
\end{proof}

If we take $W=\mathbb{P}^{3}$ as in \cite[\S 6]{CE1906}, or more in general a \textit{rational} smooth irreducible threefold, instead of any regular smooth irreducible threefold, we obtain an additional property. Let us see which one.

\begin{remark}\label{rem:rationality congruence if W rational}
Let $(W,\mathcal{L}_{\bullet})$ be a pair given by a threefold and a linear system satisfying the hypothesis of Proposition~\ref{prop:C-E L*}. If $W$ is rational, the congruence $\mathcal{V}$ of curves of $W$ is parametrized by a rational surface $R$. Let us explain why.
Through a general point $w\in W$ only one curve $\gamma_{w}\in \mathcal{V}$ passes (see Definition~\ref{def:congruence}). If $R$ is the surface parametrizing the curves of $\mathcal{V}$, let $r_{w}$ be the point of $R$ corresponding to the curve $\gamma_{w}$. We have a dominant rational map $W \dashrightarrow R$ such that $w \mapsto r_{w}$. Since $W$ is rational, then $R$ is unirational, and so, as consequence of the Castelnuovo Rationality criterion, $R$ is rational.
\end{remark}

\begin{castelnuovo}
Let us take a rational smooth irreducible threefold $W$ and an $r$-dimensional linear system $\mathcal{L}$ on $W$ such that a general $S \in \mathcal{L}$ is a smooth irreducible surface with zero geometric genus $p_{g}(S) = 0$ and zero arithmetic genus $p_{a}(S)=0$.
Let $\mathcal{L}_{\bullet}$ be the sublinear system of $\mathcal{L}$ given by the surfaces of $\mathcal{L}$ having a triple point at a general point $w \in W$. 
Then the linear system $\mathcal{L}_{\bullet}$ has dimension $r-10$ and one of the following conditions occurs:
\begin{enumerate}[(A)]
\item\label{castelnuovoA} a general element $S_{\bullet}\in\mathcal{L}_{\bullet}$ is an irreducible surface which has irregular desingularization $\widetilde{S}_{\bullet}$ with $q(\widetilde{S}_{\bullet}) = 1$, $p_g(\widetilde{S}_{\bullet}) = 0$ and $p_a(\widetilde{S}_{\bullet}) = -1$;
\item\label{castelnuovoB} the surfaces $S_{\bullet}\in\mathcal{L}_{\bullet}$ are reducible in the union $S_{\bullet} = F_{\bullet} \cup M_{\bullet}$ of two rational surfaces passing through the point $w$, where the surface $M_{\bullet}$ changes by moving $S_{\bullet}$, the surface $F_{\bullet}$ is fixed and $F_{\bullet} \cap M_{\bullet}$ is a rational curve; 
\item\label{castelnuovoC} the surfaces $S_{\bullet}\in\mathcal{L}_{\bullet}$ have the same genera as a general $S \in \mathcal{L}$. 
\end{enumerate}
\end{castelnuovo}

Let us suppose that case (\ref{castelnuovoA}) of Castelnuovo's conjecture occurs.
Let us consider the blow-ups necessary to obtain a birational morphism $bl: \widetilde{W} \to W$ such that the strict transform $\widetilde{S}_{\bullet}$ of $S_{\bullet}$ is a smooth irreducible surface 
moving
in an $r$-dimensional base point free linear system, given by the strict transform $\widetilde{\mathcal{L}}_{\bullet}$ of $\mathcal{L}_{\bullet}$.
If $r\ge 13$, then $r_{\bullet} := \dim \mathcal{L}_{\bullet} = r-10 \ge 3$ and we can apply Proposition~\ref{prop:C-E L*} to the pair $(\widetilde{W}, \widetilde{\mathcal{L}}_{\bullet})$. Thus, the intersection of two general surfaces of $\widetilde{\mathcal{L}}_{\bullet}$ is the union of some elements of a congruence 
of curves of $\widetilde{W}$. 
These curves are fibres of a pencil of genus $b$ on a general surface $\widetilde{S}_{\bullet}\in \widetilde{\mathcal{L}}_{\bullet}$, where $0\le b \le q(\widetilde{S}_{\bullet})=1$. 
In particular, if $\widetilde{\phi}_{\bullet} : \widetilde{W} \to \mathbb{P}^{r_{\bullet}}$ is the morphism defined by $\widetilde{\mathcal{L}}_{\bullet}$, 
we have that $b:=p_g(\Gamma)$ where $\Gamma := \widetilde{\phi}_{\bullet} (S_{\bullet})$.
Furthermore,
$\Delta := \widetilde{\phi}_{\bullet} (\widetilde{W})$
is a rational surface of $\mathbb{P}^{r_{\bullet}}$ with general hyperplane section $\Gamma$ (see Remark~\ref{rem:rationality congruence if W rational}). 

\begin{remark}\label{rem:caseEllipticGamma}
If case (\ref{castelnuovoA}) of Castelnuovo's conjecture occurs, if $r\ge 13$ and if $\Gamma$ is an elliptic curve, then $\Delta \subset \mathbb{P}^{r_{\bullet}}$ is a Del Pezzo surface (see \cite[VI, Exercise (1)]{Beau78}). In this case $\Delta \subset \mathbb{P}^{r_{\bullet}}$ is represented on the projective plane $\mathbb{P}^2$ by a linear system $\mathcal{D}$ of elliptic curves with $\dim \mathcal{D}\le 9$. Since the linear system $\mathcal{L}_{\bullet}$ is in birational correspondence with the linear system $\mathcal{D}$, we have $\dim \mathcal{D} = \dim \mathcal{L}_{\bullet} = r-10\le 9$, which implies $r \le 19$. 
\end{remark}

\section{Castelnuovo's conjecture for singular threefolds}\label{sec: conjecture singular}

Interestingly, Castelnuovo's conjecture, its consequences and preliminary results can be adapted to singular threefolds.
Let us see which ones and how.
Let $W$ be an 
irreducible threefold with \textit{isolated singularities} and let $\mathcal{L}$ be an $r$-dimensional linear system on $W$, where $r\ge 2$, such that the general element $S\in\mathcal{L}$ is a smooth irreducible surface disjoint from the singular points of $W$. Let us take a resolution $f: \widehat{W} \to W$ of the singularities of $W$. Since $f$ is an isomorphism outside the singular points of $W$, we have that the surface $f^{-1}(S)$ is isomorphic to $S$. Furthermore, $f^{-1}(S)$ moves in the linear system $\widehat{\mathcal{L}}:=f^* \mathcal{L}$, which still has $\dim \widehat{\mathcal{L}} = r$. 
Consequently we have a smooth irreducible threefold $\widehat{W}$ endowed with an $r$-dimensional linear system $\widehat{\mathcal{L}}$, where $r\ge 2$, such that the general element $\widehat{S}\in \widehat{\mathcal{L}}$ is a smooth irreducible surface.
If in addition $W$ is rational and $p_g(S) = p_a(S) = 0$, then $\widehat{W}$ is rational too and $p_g(\widehat{S}) = p_a(\widehat{S}) = 0$. Let $w$ be a general point of $W$: since we may assume that $w$ is not a singular point of $W$, then $\widehat{w}:=f^{-1}(w)$ is still a point of $\widehat{W}$. Furthermore, if $\mathcal{L}_{\bullet}$ is the sublinear system of $\mathcal{L}$ given by the surfaces of $\mathcal{L}$ having a triple point at $w \in W$, then $\widehat{\mathcal{L}}_{\bullet} := f^* \mathcal{L}_{\bullet}$ is the sublinear system of $\widehat{\mathcal{L}}$ given by the surfaces of $\widehat{\mathcal{L}}$ having a triple point at $\widehat{w}\in \widehat{W}$.
Thus, we can adapt Castelnuovo's conjecture to a 
rational irreducible threefold $W$ with isolated singularities endowed with an $r$-dimensional linear system $\mathcal{L}$ whose general element is a smooth irreducible surface disjoint from the singular points of $W$, since we can birationally work with the pair $(\widehat{W}, \widehat{\mathcal{L}})$ defined as above.
For completeness, let us state the following results.

\begin{theorem}\label{thm: generalization C-E}
Let $W$ be an 
irreducible threefold with isolated singularities and let $\mathcal{L}$ be an $r$-dimensional linear system on $W$, where $r\ge 2$, such that the general element is a smooth irreducible surface disjoint from the singular points of $W$. If the elements of $\mathcal{L}$ are big and nef divisors, then a desingularization of $W$ has the same irregularity as a general surface $S\in \mathcal{L}$.
\end{theorem}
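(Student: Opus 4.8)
The plan is to reduce the statement to the smooth case already handled by Proposition~\ref{prop:C-E qW=qS}. Since $W$ has only isolated singularities, the discussion immediately preceding the theorem tells us exactly what to do: choose a resolution of singularities $f\colon \widehat{W}\to W$, which is an isomorphism over the complement of the finitely many singular points. First I would set $\widehat{\mathcal{L}} := f^{*}\mathcal{L}$ and observe that $\dim\widehat{\mathcal{L}} = r$ and that a general $\widehat{S}\in\widehat{\mathcal{L}}$ equals $f^{-1}(S)$ for a general $S\in\mathcal{L}$; since $S$ is disjoint from $\mathrm{Sing}(W)$ and $f$ is an isomorphism away from $\mathrm{Sing}(W)$, the map $f$ restricts to an isomorphism $\widehat{S}\xrightarrow{\ \sim\ }S$. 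In particular $\widehat{S}$ is again a smooth irreducible surface with $q(\widehat{S}) = q(S)$.

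Next I would check that the hypothesis "big and nef" is inherited by $\widehat{\mathcal{L}}$. Nefness is clear: the pullback of a nef divisor under any morphism is nef. Bigness is preserved under pullback by a birational (in particular generically finite of degree one) morphism, since the top self-intersection, or equivalently the volume, is unchanged; alternatively, $h^{0}(\widehat{W}, m\widehat{S}) \ge h^{0}(W, mS)$ for all $m$ because $f_{*}\mathcal{O}_{\widehat{W}} = \mathcal{O}_{W}$ (as $W$ is normal, being irreducible with isolated singularities in a context where normality is assumed), so the linear system $|mS|$ pulls back injectively. Hence $\widehat{S}$ is a big and nef Cartier divisor on the smooth irreducible threefold $\widehat{W}$.

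Now the proof concludes by applying Proposition~\ref{prop:C-E qW=qS} to the pair $(\widehat{W}, \widehat{\mathcal{L}})$: since $r\ge 2$, a general $\widehat{S}\in\widehat{\mathcal{L}}$ is a big and nef divisor which is an irreducible surface, so $q(\widehat{W}) = q(\widehat{S})$. Combining with $q(\widehat{S}) = q(S)$ gives $q(\widehat{W}) = q(S)$. Finally, since the irregularity of a desingularization is a birational invariant of $W$ (any two resolutions of $W$ are birationally equivalent smooth projective threefolds, and $q$ is a birational invariant in this setting), the statement "a desingularization of $W$ has the same irregularity as $S$" is independent of the chosen resolution, which is exactly what "$W$ has regular (or here, arbitrary) desingularization" means in the terminology fixed in the introduction.

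The main obstacle, such as it is, is purely bookkeeping: one must be careful that the chosen resolution $f$ genuinely restricts to an isomorphism on a general $S$ — this uses in an essential way the hypothesis that the general surface avoids $\mathrm{Sing}(W)$ — and that bigness descends/ascends correctly along $f$. Neither is deep, but both need to be stated cleanly. There is no serious content beyond Proposition~\ref{prop:C-E qW=qS} and the birational invariance of $q$; the theorem is essentially a formal transfer of the smooth result through a resolution of singularities.
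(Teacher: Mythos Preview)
Your proposal is correct and follows exactly the paper's approach: the paper's proof is the single line ``Let us apply Proposition~\ref{prop:C-E qW=qS} to the pair $(\widehat{W},\widehat{\mathcal{L}})$, constructed as above,'' where ``constructed as above'' refers to precisely the resolution-and-pullback construction you spell out. You have simply supplied the details (preservation of big and nef under $f^{*}$, the isomorphism $\widehat{S}\cong S$, birational invariance of $q$) that the paper leaves implicit.
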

\begin{proof}
Let us apply Proposition~\ref{prop:C-E qW=qS} to the pair $(\widehat{W},\widehat{\mathcal{L}})$, constructed as above.
\end{proof}

\begin{theorem}\label{thm: generalization cor C-E}
Let $W$ be an 
irreducible threefold with isolated singularities and let $\mathcal{L}$ be an $r$-dimensional linear system on $W$, where $r\ge 3$, such that the general element is a smooth irreducible surface disjoint from the singular points of $W$. If $W$ has regular desingularization and if the intersection of two general surfaces of $\mathcal{L}$ (outside the base locus) is an irreducible curve, then a general element $S\in \mathcal{L}$ is a regular surface.
\end{theorem}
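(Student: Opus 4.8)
The plan is to reduce to the smooth situation of Proposition~\ref{prop:C-E Sregular} via a resolution of singularities, in complete analogy with the proof of Theorem~\ref{thm: generalization C-E}. First I would fix a resolution $f:\widehat{W}\to W$ and form the pair $(\widehat{W},\widehat{\mathcal{L}})$ with $\widehat{\mathcal{L}}:=f^{*}\mathcal{L}$, exactly as constructed in the paragraphs preceding Theorem~\ref{thm: generalization C-E}. Then $\widehat{W}$ is a smooth irreducible threefold, $\dim\widehat{\mathcal{L}}=r\ge 3$, and a general $\widehat{S}\in\widehat{\mathcal{L}}$ equals $f^{-1}(S)$, which is isomorphic to the general $S\in\mathcal{L}$ and hence a smooth irreducible surface.

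Next I would verify the two remaining hypotheses of Proposition~\ref{prop:C-E Sregular} for the pair $(\widehat{W},\widehat{\mathcal{L}})$. The assumption that $W$ has regular desingularization gives exactly $q(\widehat{W})=0$, so $\widehat{W}$ is regular (and since the irregularity of a smooth projective variety is a birational invariant, the choice of resolution is immaterial). For the condition on the intersection curve: since $f$ is an isomorphism away from the finitely many singular points of $W$, and two general members $S,S'\in\mathcal{L}$ are disjoint from those points by hypothesis, the curve $\widehat{S}\cap\widehat{S}'$ coincides, outside the base locus of $\widehat{\mathcal{L}}$, with the isomorphic image of $S\cap S'$ outside the base locus of $\mathcal{L}$; the latter is irreducible by hypothesis, hence so is the former.

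Then Proposition~\ref{prop:C-E Sregular} applies to $(\widehat{W},\widehat{\mathcal{L}})$ and yields that a general $\widehat{S}\in\widehat{\mathcal{L}}$ is regular, i.e. $q(\widehat{S})=0$. Since $\widehat{S}\cong S$, we conclude $q(S)=0$, so the general $S\in\mathcal{L}$ is a regular surface.

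I do not expect a genuine obstacle here. The delicate points in the smooth case — base curves and base points of the linear system — are already absorbed inside the proof of Proposition~\ref{prop:C-E Sregular} (through Remark~\ref{rem:basecurves}), so nothing further needs to be arranged on $\widehat{W}$. The only items requiring a little care are bookkeeping ones: ensuring that the general surface of $\mathcal{L}$ avoids the exceptional locus of $f$, which is guaranteed by the disjointness-from-singular-points hypothesis, and checking that irreducibility of the intersection curve is unaffected by passing from $W$ to $\widehat{W}$, which is immediate since $f$ restricts to an isomorphism over the open set containing both $S$ and $S'$.
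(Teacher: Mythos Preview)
Your proposal is correct and is exactly the paper's approach: the paper's proof is the single line ``apply Proposition~\ref{prop:C-E Sregular} to the pair $(\widehat{W},\widehat{\mathcal{L}})$, constructed as above,'' and your write-up simply unpacks the verification of its hypotheses in the manner the paper leaves implicit.
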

\begin{proof}
Let us apply Proposition~\ref{prop:C-E Sregular} to the pair $(\widehat{W},\widehat{\mathcal{L}})$, constructed as above.
\end{proof}

\begin{theorem}\label{thm: generalization L*}
Let $W$ be an 
irreducible threefold with isolated singularities and let $\mathcal{L}_{\bullet}$ be an $r_{\bullet}$-dimensional linear system on $W$, where $r_{\bullet}\ge 3$, such that the general element is an irregular smooth irreducible surface disjoint from the singular points of $W$. If $W$ has regular desingularization, then two general elements $S_{\bullet}$ and $S_{\bullet}'$ of $\mathcal{L}_{\bullet}$ intersect each other (outside the base locus) along a reducible curve. In particular on a fixed $S_{\bullet}$, the components of such a curve are fibres of a pencil of genus $b$ with $0\le b\le q(S_{\bullet})$. Furthermore, by moving the surface $S_{\bullet}$, these component curves give a congruence 
of curves of $W$.
\end{theorem}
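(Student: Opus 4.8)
The plan is to reduce to the smooth case of Proposition~\ref{prop:C-E L*}, following verbatim the device already used to deduce Theorems~\ref{thm: generalization C-E} and~\ref{thm: generalization cor C-E}. Concretely, I would fix a resolution of singularities $f : \widehat{W} \to W$ and pass to the pair $(\widehat{W}, \widehat{\mathcal{L}}_{\bullet})$ with $\widehat{\mathcal{L}}_{\bullet} := f^{*}\mathcal{L}_{\bullet}$, exactly as in the discussion preceding this group of statements. The points to record are: (i) $f$ is an isomorphism over the complement of the finite singular locus of $W$, so a general $S_{\bullet}\in\mathcal{L}_{\bullet}$ --- which by hypothesis avoids $\mathrm{Sing}(W)$ --- has $f^{-1}(S_{\bullet})\cong S_{\bullet}$, and these surfaces move in $\widehat{\mathcal{L}}_{\bullet}$, a linear system of the same dimension $r_{\bullet}\ge 3$; (ii) the assumption that $W$ has regular desingularization means exactly $q(\widehat{W}) = 0$, so $\widehat{W}$ is a regular smooth irreducible threefold; (iii) a general element of $\widehat{\mathcal{L}}_{\bullet}$ is an irregular smooth irreducible surface. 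Hence the hypotheses of Proposition~\ref{prop:C-E L*} are met by $(\widehat{W}, \widehat{\mathcal{L}}_{\bullet})$, and one may invoke it directly; note that the reduction to a base point free system is already carried out inside the proof of that proposition, so no extra blow-ups need to be discussed here.

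Proposition~\ref{prop:C-E L*} then yields that two general $\widehat{S}_{\bullet}, \widehat{S}_{\bullet}'\in\widehat{\mathcal{L}}_{\bullet}$ meet, outside the base locus, along a reducible curve whose components are fibres of a pencil of genus $b$ on $\widehat{S}_{\bullet}$ with $0\le b\le q(\widehat{S}_{\bullet})$, and that moving $\widehat{S}_{\bullet}$ makes these components sweep out a congruence $\widehat{\mathcal{V}}$ of curves of $\widehat{W}$ in the sense of Definition~\ref{def:congruence}. It remains to push these conclusions forward along $f$. Since $f$ restricts to an isomorphism $\widehat{S}_{\bullet}\cong S_{\bullet}$ compatible with the linear systems, we get $q(\widehat{S}_{\bullet}) = q(S_{\bullet})$, the pencil of genus $b$ transfers to one on $S_{\bullet}$ with $0\le b\le q(S_{\bullet})$, and reducibility of $\widehat{S}_{\bullet}\cap\widehat{S}_{\bullet}'$ away from the base locus descends to reducibility of $S_{\bullet}\cap S_{\bullet}'$ away from the base locus of $\mathcal{L}_{\bullet}$. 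Finally, $f(\widehat{\mathcal{V}})$ is again a two-dimensional irreducible family of curves of $W$.

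The one point that needs a word of care is the very last step: checking that $f(\widehat{\mathcal{V}})$ still satisfies the defining property of a congruence, i.e. that through a general point of $W$ passes exactly one of its members. This is harmless, because a general point of $W$ may be taken in the dense open locus over which $f$ is an isomorphism; over that locus the count of curves of $\widehat{\mathcal{V}}$ through a general point of $\widehat{W}$ transfers unchanged, while the members of $\widehat{\mathcal{V}}$ meeting the exceptional locus of $f$ form at most a proper subfamily and hence do not affect the generic assertion. Beyond this routine bookkeeping the argument requires no new ideas, which is why I expect the proof to be as short as those of Theorems~\ref{thm: generalization C-E} and~\ref{thm: generalization cor C-E}: simply apply Proposition~\ref{prop:C-E L*} to the pair $(\widehat{W},\widehat{\mathcal{L}}_{\bullet})$ constructed above.
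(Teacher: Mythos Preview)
Your proposal is correct and follows exactly the paper's approach: the paper's proof is the single sentence ``Let us apply Proposition~\ref{prop:C-E L*} to the pair $(\widehat{W},\widehat{\mathcal{L}}_{\bullet})$, constructed as above,'' and your write-up simply unpacks the routine verifications (isomorphism off the singular locus, regularity of $\widehat{W}$, transfer of the pencil and congruence under $f$) that justify this reduction.
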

\begin{proof}
Let us apply Proposition~\ref{prop:C-E L*} to the pair $(\widehat{W},\widehat{\mathcal{L}}_{\bullet})$, constructed as above.
\end{proof}

We will study a particular type of pair 
$(W, \mathcal{L})$ with the properties seen above.
Let us recall that an \textit{Enriques surface} is a smooth, irreducible surface $S$ with zero irregularity $q(S)=0$ and non-trivial canonical divisor $K_{S}$ such that $2K_{S} \sim 0$.

\begin{definition}\label{def:EF}
A pair $(W,\mathcal{L})$, or simply $W$, is called \textit{Enriques-Fano threefold} if
\begin{itemize}
\item[(i)] $W$ is a normal threefold;
\item[(ii)] $\mathcal{L}$ is a complete linear system of ample Cartier divisors on $W$ such that the general element $S\in \mathcal{L}$ is an Enriques surface;
\item[(iii)] $W$ is not a \textit{generalized cone} over $S$, i.e., $W$ is not obtained by contraction of the negative section on the $\mathbb{P}^1$-bundle $\mathbb{P}(\mathcal{O}_{S}\oplus \mathcal{O}_{S}(S))$ over $S$.
\end{itemize}
\end{definition}

We define the \textit{genus} and the \textit{degree} of an Enriques-Fano threefold $(W,\mathcal{L})$ to be, respectively, the values $p:=\frac{S^{3}}{2}+1$ and $\deg (W) := S^3$, where $S\in \mathcal{L}$. 
Hence $\deg(W) = 2p-2$. 
The linear system $\mathcal{L}$ defines a rational map $\phi_{\mathcal{L}} : W \dashrightarrow \mathbb{P}^{p}$, where $\dim \mathcal{L} = p\ge 2$. Furthermore, the genus $p$ of an Enriques-Fano threefold $(W,\mathcal{L})$ is at most $17$ and the bound is sharp (see \cite{KLM11} and \cite{Pro07}). 

\begin{definition}\label{def:terminal canonical sing}
Let $W$ be a normal variety such that its canonical divisor $K_{W}$ is $\mathbb{Q}$-Cartier.
Let 
$f : \widetilde{W} \to W$ be a resolution of the singularities of $W$ and let $\{E_{i}\}_{i\in I}$ be the family of all irreducible exceptional divisors. Since we have
that
$K_{\widetilde{W}} = f^{*}\left(K_{W}\right) + \sum_{i\in I} a_{i} E_{i}$ with $a_i \in \mathbb{Q},$
we say that the singularities of $W$ are \textit{terminal} if $a_{i} > 0$ for all $i$, and 
they are \textit{canonical} if $a_{i} \ge 0$ for all $i$.
\end{definition}

It is known that every Enriques-Fano threefold $(W,\mathcal{L})$ is singular with isolated 
singular points (see \cite[Lemma 3.2]{CoMu85}): moreover 
$K_{W}$ is $2$-Cartier and these singularities are canonical (see \cite{Ch96}). Since a general $S\in \mathcal{L}$ is a Cartier divisor on $W$ and it is an Enriques surface, then $S$ is a smooth surface with zero geometric genus and zero arithmetic genus which is disjoint from the singular points of $W$. Furthermore, $W$ has regular desingularization (see \cite[Lemma 4.1]{CDGK20}).
Although the classification of Enriques-Fano threefolds $(W,\mathcal{L})$ still accounts for an open question, examples have been found by several authors.
Fano claimed that Enriques-Fano threefolds exist only for $p=4,6,7,9,13$ (see \cite{Fa38}),
but his classification is incomplete.
Under the assumption that the singularities of $W$ are terminal cyclic quotients, the Enriques-Fano threefolds were classified by Bayle in \cite{Ba94} and, in a similar and independent way, by Sano in \cite{Sa95}. If $W$ appears in the papers of Bayle and Sano, it has genus $2\le p \le 10$ or $p=13$. 
More generally, if an Enriques-Fano threefold has terminal singularities, then it admits a $\mathbb{Q}$\textit{-smoothing}, i.e., it appears as central fibre of a small deformation over the $1$-parameter unit disk such that a general fibre only has cyclic quotient terminal singularities (see \cite[Main Theorem 2]{Mi99}). Hence every Enriques-Fano threefold with only terminal singularities is a limit of the ones contained in the papers of Bayle and Sano. Thus, to complete the classification, 
the case of non-terminal canonical singularities ought to be considered.
To date, only a few examples of Enriques-Fano threefolds with non-terminal canonical singularities are known: one of genus $p=9$ found by Knutsen, Lopez and Mu\~{n}oz in \cite{KLM11} and another one of genus $17$ found by Prokhorov in \cite{Pro07}. 
Finally there is an Enriques-Fano threefold of genus $13$, which was mentioned very briefly by Prokhorov (see \cite[Remark 3.3]{Pro07}).

\section{Enriques-Fano threefolds of genus greater than or equal to 13}\label{sec:EFGENUSgreater13}

In order to adapt the ideas of Castelnuovo to the Enriques-Fano threefolds $(W,\mathcal{L})$, we are interested in the ones of genus $ p= \dim \mathcal{L} \ge 13$. Let us list the known Enriques-Fano threefolds of genus $13 \le p \le 17$.

\begin{classical}
Let $\mathcal{S}$ be the linear system of the sextic surfaces of $\mathbb{P}^{3}$ having double points along the six edges of a fixed tetrahedron $T\subset \mathbb{P}^{3}$. 
Up to a change of coordinates, one can consider the tetrahedron $T=\{s_0s_1s_2s_3=0\}$ in $\mathbb{P}^3_{\left[ s_0:s_1:s_2:s_3 \right]}$. In this case $\mathcal{S}$ has equation 
$$\lambda_0s_1^2s_2^2s_3^2+ \lambda_1s_0^2s_2^2s_3^2+\lambda_2s_0^2s_1^2s_3^2+\lambda_3s_0^2s_1^2s_2^2+ s_0s_1s_2s_3Q(s_0,s_1,s_2,s_3)=0,$$
where $\lambda_0,\lambda_1,\lambda_2,\lambda_3\in \mathbb{C}$ and $Q(s_0,s_1,s_2,s_3)=\sum_{i\le j} q_{ij}s_is_j$ is a quadratic form (see \cite[p.635]{GH}). Then $\dim \mathcal{S} = 13$ and a general element of $\mathcal{S}$ has triple points at the vertices of $T$.
The rational map $\nu_{\mathcal{S}} : \mathbb{P}^3 \dashrightarrow \mathbb{P}^{13}$ defined by $\mathcal{S}$ is birational onto the image, which we will denote by $W_F^{13}$; indeed, the linear system $\mathcal{S}$ is very ample outside the tetrahedron $T$, since it contains a sublinear system whose fixed part is given by the tetrahedron $T$ and whose movable part is given by the quadric surfaces of $\mathbb{P}^3$.
If $\mathcal{L}$ denotes the linear system of the hyperplane sections of $W_F^{13}\subset \mathbb{P}^{13}$, then $(W_{F}^{13},\mathcal{L})$ is an Enriques-Fano threefold of genus $p=13$ (see \cite[\S 8]{Fa38}). The threefold $W_{F}^{13}$ is rational by construction and it has eight quadruple points whose tangent cone is a cone over a Veronese surface. 
We will refer to this Enriques-Fano threefold $(W_{F}^{13},\mathcal{L})$ as the \textit{classical Enriques-Fano threefold}.
\end{classical}

\begin{bayle13}
Let us consider the smooth Fano threefold $X = \mathbb{P}^{1}\times \mathbb{P}^{1}\times \mathbb{P}^{1}$ and the involution $\sigma : X \to  X$ given by 
$$\left[x_0 : x_1\right] \times \left[y_0 : y_1\right] \times \left[z_0 : z_1\right] \mapsto \left[x_0 : -x_1\right] \times \left[y_0 : -y_1\right] \times \left[z_0 : -z_1\right],$$
which has the following eight fixed points
$$p_1' = \left[0 : 1\right] \times \left[1 : 0\right] \times \left[1 : 0\right], \quad
p_1 = \left[1 : 0\right] \times \left[0 : 1\right] \times \left[0 : 1\right],$$
$$p_2' = \left[0 : 1\right] \times \left[0 : 1\right] \times \left[0 : 1\right], \quad 
p_2 = \left[1 : 0\right] \times \left[1 : 0\right] \times \left[1 : 0\right],$$ 
$$p_3 = \left[0 : 1\right] \times \left[1 : 0\right] \times \left[0 : 1\right], \quad
p_3' = \left[1 : 0\right] \times \left[0 : 1\right] \times \left[1 : 0\right],$$ 
$$p_4 = \left[0 : 1\right] \times \left[0 : 1\right] \times \left[1 : 0\right], \quad 
p_4' = \left[1 : 0\right] \times \left[1 : 0\right] \times \left[0 : 1\right].$$ 
Let us take the quotient morphism $\pi : X \to X/ \sigma =: W_{BS}^{13}$, which is defined by the sublinear system of $|-K_X|$ given by the $\sigma$-invariant elements. The threefold $W_{BS}^{13}$ is an Enriques-Fano threefold of genus $13$ and it is endowed by a linear system $\mathcal{L}$ defining an embedding $\phi_{\mathcal{L}} : W_{BS}^{13} \hookrightarrow \mathbb{P}^{13}$ (see \cite[\S 6.3.2]{Ba94} and \cite[Theorem 1.1 No.14]{Sa95}). Furthermore, $W_{BS}^{13}$ has eight singular points, given by the images via $\pi$ of the eight fixed points of $\sigma$. Thanks to the use of Macaulay2 it turns out that the embedding of $W_{BS}^{13}$ in $\mathbb{P}^{13}$ is the classical Enriques-Fano threefold (see Code~\ref{code:bayle13} of Appendix~\ref{app:code}). 
\end{bayle13}

\begin{pro17} 
Let $P$ be the octic Del Pezzo surface given by the image of the anticanonical embedding of $\mathbb{P}^{1}\times \mathbb{P}^{1}$ in $\mathbb{P}^{8}$, which is defined by the linear system of the divisors of bidegree $(2,2)$, i.e.
$$\left[u_0 : u_1\right] \times \left[v_0 : v_1\right]  \mapsto  \left[y_{0,0}:y_{0,1}:y_{0,2},y_{1,0}:y_{1,1}:y_{1,2}:y_{2,0}:y_{2,1}:y_{2,2}\right]$$
where $y_{i,j}:=u_0^{i}u_1^{2-i}v_0^{j}v_1^{2-j}$. Let us consider $\mathbb{P}^{8}$ as the hyperplane $\{x=0\}$ in the $9$-dimensional projective space $\mathbb{P}^9_{\left[y_{0,0}:\dots:y_{2,2}:x\right]}$ and let us take the cone $V$ over $P$ with vertex at $v:=\left[0:\dots :0:1\right]$. The cone $V$ is a Fano threefold with anticanonical divisor $-K_V = 2M$, where $M$ is the class of the hyperplane sections of $V$ (see \cite[Lemma 3.1]{Pro07}). Let $\tau : V \to V$ be the involution defined by $\tau(x)=-x$ and $\tau(y_{i,j})=(-1)^{i+j}y_{i,j}$. This involution has five fixed points $v$, $v_{0,0}$, $v_{0,2}$, $v_{2,0}$, $v_{2,2}$, where $v_{i,j}:=\{x=0,y_{k,l}=0|(k,l)\ne (i,j)\}$. There exists a linear system $\mathcal{Q}$ of quadric sections of $V$ such that $\mathcal{Q}$ is base point free and each member of $\mathcal{Q}$ is $\tau$-invariant:
it is given by the restriction on $V$ of the quadric hypersurfaces of $\mathbb{P}^{9}$ of type
$q_{1}(y_{0,0},y_{0,2},y_{1,1},y_{2,0},y_{2,2})+q_2(y_{0,1},y_{1,0},y_{1,2},y_{2,1},x)=0,$
where $q_1$ and $q_2$ are quadratic homogeneous forms. 
In particular a general member $\widetilde{S}\in \mathcal{Q}$ is smooth, it does not contain any of $v$, $v_{0,0}$, $v_{0,2}$, $v_{2,0}$, $v_{2,2}$ and the action of $\tau$ on $\widetilde{S}$ is fixed point free. Since $\mathcal{Q} \subset |2M|=|-K_V|$, then $\widetilde{S}$ is a K3 surface. Let $\pi : V \to  V/\tau =: W_{P}^{17}$ be the quotient morphism and let $S:= \pi(\widetilde{S})= \widetilde{S} /\tau$. Then $S$ is a smooth Enriques surface and if we set $\mathcal{L}:=|\mathcal{O}_{W_{P}^{17}}(S)|$, we have that $(W_{P}^{17},\mathcal{L})$ is an Enriques-Fano threefold of genus 17 (see \cite[Proposition 3.2]{Pro07}). The threefold $W_{P}^{17}$ is unirational by construction and it has five singular points, given by the images via $\pi$ of the five fixed points of $\tau$. Understanding whether $W_P^{17}$ is rational remains an open question. We will refer to this Enriques-Fano threefold $(W_{P}^{17},\mathcal{L})$ as the \textit{Prokhorov-Enriques-Fano threefold of genus 17}.
\end{pro17}

\begin{pro13} 
Let $S_6 \subset \mathbb{P}^{6}$ be the sextic Del Pezzo surface, which is the image of $\mathbb{P}^{2}$ via the rational map defined by the linear system of the cubic plane curves passing through three fixed points in general position. Let us consider $\mathbb{P}^{6}$ as a hyperplane $H$ in a $7$-dimensional projective space $\mathbb{P}^7$ and let us take the cone $V$ over $S_6$ with vertex at a general point $v\in\mathbb{P}^7$. Prokhorov claimed that one can construct an Enriques-Fano threefold $(W_{P}^{13},\mathcal{L})$ of genus $13$, in a similar way as for the Prokhorov-Enriques-Fano threefold of genus $17$ (see \cite[Remark 3.3]{Pro07}). 
We will refer to this object as the \textit{Prokhorov-Enriques-Fano threefold of genus 13}.
\end{pro13}

In \S~\ref{sec: classical EF3-fold} we will apply the ideas of Castelnuovo to the classical Enriques-Fano threefold $(W_F^{13},\mathcal{L})$: we will see that case (\ref{castelnuovoA}) of Castelnuovo's conjecture occurs for $(W_F^{13},\mathcal{L})$ (see Theorem~\ref{thm:MAINthmCONJ}). 
This will lead to the claim that case (\ref{castelnuovoA}) of Castelnuovo's conjecture also occurs for the Prokhorov-Enriques-Fano threefold of genus $17$ (see Corollary~\ref{cor:castelnuovop17}). 
We observe that it actually makes sense to ask ourselves about the link between the arguments of Castelnuovo and the Prokhorov-Enriques-Fano threefolds, since the Remark~\ref{rem:rationality congruence if W rational} also holds for a unirational variety.

\section{Castelnuovo's conjecture for the classical Enriques-Fano threefold} \label{sec: classical EF3-fold}

Let us consider the classical Enriques-Fano threefold $(W=W_F^{13},\mathcal{L})$. We aim to study the sublinear system $\mathcal{L}_{\bullet}\subset \mathcal{L}$ of the hyperplane sections of $W$ with triple point at a general point $w\in W$.

We recall that $W$ is the image of $\mathbb{P}^{3}$ via the birational map $\nu_{\mathcal{S}} : \mathbb{P}^{3} \dashrightarrow W\subset \mathbb{P}^{13}$, defined by the linear system $\mathcal{S}$ of the sextic surfaces having double points along the edges of a tetrahedron $T$ (see \cite[\S 8]{Fa38}).
Let us fix the tetrahedron $T\subset \mathbb{P}^3$ with vertices $v_0$, $v_1$, $v_2$, $v_3$ and faces $f_0$, $f_1$, $f_2$, $f_3$, such that $f_i$ is the face opposite to the vertex $v_i$, and let us denote the edges by $l_{ij}:=f_i\cap f_j$, for $0\le i<j\le 3$.
By definition, for each surface $S\in \mathcal{L}$ there is a unique sextic surface $\Sigma\in \mathcal{S}$ such that $S = \nu_{\mathcal{S}}(\Sigma)$. Hence, if we take a surface $S_{\bullet}\in \mathcal{L}_{\bullet}\subset \mathcal{L}$, there must exist a unique sextic surface $\Sigma_{\bullet}\in \mathcal{S}$ such that $S_{\bullet} = \nu_{\mathcal{S}}(\Sigma_{\bullet})$. This surface $\Sigma_{\bullet}$ is a particular surface of $\mathcal{S}$, which has triple point at the point $p\in \mathbb{P}^{3}$ such that $w = \nu_{\mathcal{S}}(p)$.
We can so represent the linear system $\mathcal{L}_{\bullet}$ on $W$ via the sublinear system $\mathcal{S}_{\bullet}\subset \mathcal{S}$ on $\mathbb{P}^{3}$ given by the sextic surfaces of $\mathbb{P}^{3}$ double along the six edges of the tetrahedron $T$
and 
triple at the point $p \in \mathbb{P}^{3}$ such that $\nu_{\mathcal{S}}(p) = w$. Since $w$ is a general point of $W$, we may consider $p$ as a general point of $\mathbb{P}^{3}$. A priori 
we have that $r_{\bullet} := \dim \mathcal{S}_{\bullet} \ge \dim \mathcal{S} - 10 = 13-10 = 3$ and that the linear system $\mathcal{S}_{\bullet}$ defines a rational map $\nu_{\bullet} : \mathbb{P}^{3} \dashrightarrow \mathbb{P}^{r_{\bullet}}$.

Let us take the plane $\left\langle p,l_{0i} \right\rangle$ generated by the point $p$ and the edge $l_{0i}$, for a fixed index $1\le i\le 3$. If $1\le j<k\le 3$ with $j,k\ne i$, then the edges $l_{0i}$ and $l_{jk}$ are disjoint lines of $\mathbb{P}^{3}$; so the plane $\left\langle p,l_{0i}\right\rangle$ and the line $l_{jk}$ intersect at a point, outside $l_{0i}$. Let $r_i$ be the line joining this point and the point $p$, i.e. $r_i:= \left\langle p,\left\langle p,l_{0i}\right\rangle\cap l_{jk} \right\rangle$.

\begin{proposition}\label{prop:ri contained in base locus}
Let $\mathcal{S}_{\bullet}$ be the linear system on $\mathbb{P}^{3}$ given by the sextic surfaces of $\mathbb{P}^{3}$ double along the six edges of the tetrahedron $T$
and 
triple at the general point $p \in \mathbb{P}^{3}$. Then the three lines $r_1$, $r_2$, $r_3$
are contained in the base locus of $\mathcal{S}_{\bullet}$. 
\end{proposition}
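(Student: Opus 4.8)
The plan is to show that each line $r_i$ ($i=1,2,3$) is forced to lie in every member of $\mathcal{S}_{\bullet}$ by a purely local multiplicity count along the line, using Bézout applied to the restriction of a general $\Sigma_{\bullet}\in\mathcal{S}_{\bullet}$ to the plane $\langle p, l_{0i}\rangle$. Fix $i$, and let $\{j,k\}=\{1,2,3\}\setminus\{i\}$. Set $\pi:=\langle p, l_{0i}\rangle$ and let $a:=\pi\cap l_{jk}$, so that $r_i=\langle p,a\rangle\subset\pi$. First I would record which of the combinatorial data of $T$ lie in this plane: the edge $l_{0i}$ lies in $\pi$; the point $a$ lies in $\pi$ and on the edge $l_{jk}$; and moreover $\pi$ meets the remaining edges $l_{0j}$, $l_{0k}$, $l_{ij}$, $l_{ik}$ each in one further point (these are the intersections of $\pi$ with the faces containing those edges). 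A general $\Sigma_{\bullet}$ has multiplicity $\geq 2$ along all six edges of $T$ and multiplicity $\geq 3$ at $p$.

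Next I would consider the plane sextic curve $C:=\Sigma_{\bullet}\cap\pi$, assuming first that $\pi\not\subset\Sigma_{\bullet}$. Then $C$ has degree $6$. Along the line $l_{0i}\subset\pi$, the double locus of $\Sigma_{\bullet}$ forces $\mathrm{mult}_q C\geq 2$ for every $q\in l_{0i}$, which already says $l_{0i}$ is a component of $C$ with multiplicity $\geq 2$; that accounts for degree $\geq 4$ of $C$. The residual curve $C'$ has degree $\leq 2$. Now I count the remaining forced multiplicities of $C$ (equivalently, constraints on $C'$): at the point $p$ we need $\mathrm{mult}_p C\geq 3$, so $\mathrm{mult}_p C'\geq 3-2=1$ (using $\mathrm{mult}_p(l_{0i}^2)=2$ if $p\notin l_{0i}$, which holds as $p$ is general, actually $=0$ — I would be careful here: since $p\notin l_{0i}$, the component $l_{0i}^2$ contributes $0$ to the multiplicity at $p$, forcing $\mathrm{mult}_p C'\geq 3$, so $C'$ is a conic singular at $p$, hence $C'$ is a pair of lines through $p$, or a double line through $p$). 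At the point $a=\pi\cap l_{jk}$ we need $\mathrm{mult}_a C\geq 2$; since $a\notin l_{0i}$ (the edges $l_{0i}$ and $l_{jk}$ are skew), this forces $\mathrm{mult}_a C'\geq 2$, i.e. $C'$ is singular at $a$ as well. A conic singular at two distinct points $p\neq a$ must be the double line $2\cdot\langle p,a\rangle=2r_i$. Therefore $r_i\subset C\subset\Sigma_{\bullet}$.

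Then I would dispatch the degenerate case: if $\pi\subset\Sigma_{\bullet}$ for the general $\Sigma_{\bullet}\in\mathcal{S}_{\bullet}$, then certainly $r_i\subset\pi\subset\Sigma_{\bullet}$ and there is nothing to prove; more carefully, one checks that $\pi$ is not in the base locus of $\mathcal{S}$ (since $\mathcal{S}$ is very ample off $T$ and $\pi\not\subset T$), so the generic member does not contain $\pi$, and the count above applies. Combining the two cases, $r_i$ lies in every member of $\mathcal{S}_{\bullet}$, i.e. $r_i$ is in the base locus of $\mathcal{S}_{\bullet}$; since $i\in\{1,2,3\}$ was arbitrary, all three lines $r_1,r_2,r_3$ are in the base locus.

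The main obstacle I anticipate is the bookkeeping of multiplicities at $p$: one must be sure that the triple point of $\Sigma_{\bullet}$ at $p$ is \emph{not} absorbed by the fixed double line $l_{0i}^2$ inside $C$ — this is exactly where the generality of $p$ (so that $p\notin l_{0i}$, and indeed $p$ lies on no face of $T$) is essential. A secondary point requiring a line of justification is that the conic $C'$ is genuinely singular at both $p$ and $a$ (not merely passing through them with the residual multiplicity coming for free from $l_{0i}^2$); once that is established, the classification of conics singular at two points forces $C'=2r_i$ immediately. I would also double-check that $p$, $a$, and the line $r_i$ are well defined and distinct for general $p$, which again follows from $l_{0i}$ and $l_{jk}$ being skew.
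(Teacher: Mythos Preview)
Your overall strategy---restricting $\Sigma_{\bullet}$ to the plane $\pi=\langle p,l_{0i}\rangle$ and analysing the resulting sextic curve $C$---is sound in principle, but there is a degree miscount that breaks the argument as written. A line counted with multiplicity $2$ contributes degree $2$, not $4$; hence the residual $C'=C-2\,l_{0i}$ is a plane \emph{quartic}, not a conic. Your conclusion that ``$C'$ is a conic singular at the two points $p$ and $a$, hence $C'=2r_i$'' therefore does not follow. The repair is easy: with $\deg C'=4$, $\mathrm{mult}_p C'\ge 3$ and $\mathrm{mult}_a C'\ge 2$, B\'ezout on the line $r_i=\langle p,a\rangle$ inside $\pi$ gives $r_i\cdot C'\ge 5>4$, so $r_i\subset C'\subset\Sigma_{\bullet}$, which is all the proposition requires.

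Once you make this correction, however, you will see that the restriction to $\pi$ and the splitting off of $2\,l_{0i}$ are unnecessary. The paper's proof applies B\'ezout directly to the line $r_i$ and the sextic surface $\Sigma_{\bullet}$ in $\mathbb{P}^3$: since $r_i$ lies in $\pi$, it meets $l_{0i}$ in a point, so $r_i$ passes through the triple point $p$ and through \emph{two} double points of $\Sigma_{\bullet}$ (one on $l_{0i}$, one on $l_{jk}$), giving $r_i\cdot\Sigma_{\bullet}\ge 3+2+2=7>6$ and hence $r_i\subset\Sigma_{\bullet}$. Your corrected plane argument is exactly this same count with the $l_{0i}$ contribution bookkept via the component $2\,l_{0i}$ rather than via the point $r_i\cap l_{0i}$; the direct line--surface B\'ezout is shorter and also avoids the side discussion of whether $\pi\subset\Sigma_{\bullet}$.
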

\begin{proof}
Assume the contrary. Let us take a surface $\Sigma_{\bullet}\in \mathcal{S}_{\bullet}$ and let us fix $1\le i,j,k \le 3$ with $j<k$ and $j,k\ne i$. By Bezout's Theorem, $\Sigma_{\bullet}\cap r_i$ is given by $6$ points. Since $r_i$ is a line of the plane $\left\langle p, l_{0i} \right\rangle$, then it intersects the line $l_{0i}$ at a point. Thus, $r_i$ is a line joining the triple point $p$ of $\Sigma_{\bullet}$ and two particular double points of $\Sigma_{\bullet}$, each one lying in one of the two opposite disjoint edges $l_{0i}$ and $l_{jk}$. Then $\Sigma_{\bullet}\cap r_i$ contains at least $3+2+2 = 7$ points, counted with multiplicity. This is a contradiction, so $r_i \subset \Sigma_{\bullet}$. 
\end{proof}

Let us denote by $\mathcal{A}$ the two-dimensional linear system of the planes of $\mathbb{P}^{3}$ passing through the point $p$. On a general plane $\alpha\in\mathcal{A}$ we can construct a cubic plane curve $\gamma_{\alpha}$ with node at $p$ and passing through the six points given by the intersection of 
$\alpha$ with the six edges of the tetrahedron $T$. Let us denote these six points by $A_{ij} := \alpha \cap l_{ij}$ for $0\le i<j\le 3$.

\begin{lemma}\label{lem:unique gamma_alpha}
On a general plane $\alpha \subset \mathbb{P}^{3}$ passing through the point $p$, there is a unique cubic plane curve $\gamma_{\alpha}$, defined as above.
\end{lemma}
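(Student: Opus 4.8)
The plan is to count conditions against the dimension of the linear system of cubics on the plane $\alpha$. A plane cubic curve depends on $9$ parameters (the $\mathbb{P}^9$ of ternary cubic forms). Passing through a fixed point is $1$ linear condition, and having a node (a double point) at a fixed point $p$ is $3$ linear conditions (the vanishing of the cubic form together with its two partial derivatives at $p$, i.e.\ vanishing to order $2$). So a node at $p$ plus passage through the six points $A_{ij} = \alpha \cap l_{ij}$ imposes $3 + 6 = 9$ conditions on a $9$-dimensional space, leaving a system of expected dimension $\ge 0$, i.e.\ at least one such cubic $\gamma_\alpha$. Uniqueness is exactly the statement that these $9$ conditions are independent for $\alpha$ general, so that the resulting linear system is $0$-dimensional.

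First I would argue independence by producing, for a \emph{general} plane $\alpha$ through $p$, a configuration in which the seven "fat" conditions (the length-$3$ scheme at $p$ and the six reduced points $A_{ij}$) are in sufficiently general position. The key geometric observation is that since $p \in \mathbb{P}^3$ is general, the six intersection points $A_{ij}$ are six general points of $\alpha$, and none of them coincides with $p$ nor lies on a line through $p$ forced by the node condition; more precisely, no conic through $p$ with a node there (a pair of lines through $p$) passes through more than... — one must check the Cayley–Bacharach-type position: a cubic through a length-$3$ scheme at $p$ and through the six $A_{ij}$ is unique provided the six points $A_{ij}$ together with the "doubled" point $p$ do not lie on a conic, and no five of the $A_{ij}$ together with the tangent data are collinear, etc. Concretely, it suffices to exhibit one plane $\alpha_0$ through $p$ (not necessarily general, but with $p$ still general in $\mathbb{P}^3$) for which the cubic is unique; then by semicontinuity of $h^0$ of the twisted ideal sheaf, uniqueness holds for general $\alpha$.

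An efficient way to carry this out is to work in coordinates on $\alpha \cong \mathbb{P}^2$: place $p$ at a coordinate point, write the general nodal cubic through $p$ as a $5$-dimensional linear system (the node condition cuts the $\mathbb{P}^9$ down to a $\mathbb{P}^5$ of cubics singular at $p$), and then impose passage through the six points $A_{ij}$. One checks that for $\alpha$ general the six points impose independent conditions on this $\mathbb{P}^5$ — equivalently, that no nonzero cubic singular at $p$ vanishes at five of the $A_{ij}$ unless it vanishes at all six and is unique — by noting that a cubic singular at $p$ and vanishing at five general points would have to contain, by Bézout applied to the lines $\langle p, A_{ij}\rangle$, more intersection with those lines than its degree allows, unless it splits off such a line; a short case analysis on how the cubic can degenerate (into line plus conic through $p$, or three concurrent lines at $p$) rules out every possibility for the six $A_{ij}$ in general position. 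Hence the system of nodal cubics through the $A_{ij}$ has dimension exactly $5 - 6 + (\text{one relation forced by the node and the special position of the } A_{ij}) = 0$, giving a unique $\gamma_\alpha$.

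The main obstacle I expect is precisely verifying that the six points $A_{ij}$, cut out on $\alpha$ by the six edges of the tetrahedron, are "general enough" relative to $p$ — they are not six arbitrary points, since they are constrained to lie on $\alpha$ by the fixed tetrahedral configuration, and one must confirm that for $\alpha$ varying in the pencil of planes through the general point $p$ they do not accidentally fall into a special position (e.g.\ all six on a conic through $p$, which would make $\gamma_\alpha$ non-unique). The cleanest route around this is the degeneration/semicontinuity argument above: exhibit one explicit plane (or do the linear-algebra rank computation once, possibly echoing the Macaulay2 verification the paper already employs elsewhere) where the $9$ conditions have full rank $9$, and conclude that the locus of $\alpha$ for which rank drops is a proper closed subset of $\mathcal{A}$, hence avoided by the general $\alpha$.
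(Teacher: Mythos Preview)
Your proposal is a strategy outline rather than a proof, and it contains a genuine gap together with some errors that would prevent it from going through as written.

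First, two factual slips. The linear system of plane cubics singular at $p$ has dimension $9-3=6$, so it is a $\mathbb{P}^6$, not a $\mathbb{P}^5$; your final arithmetic ``$5-6+1=0$'' is therefore meaningless. More importantly, your claim that ``the six intersection points $A_{ij}$ are six general points of $\alpha$'' is false: the four faces of the tetrahedron $T$ cut $\alpha$ in four lines, and the $A_{ij}$ are precisely the six vertices of this complete quadrilateral. In particular there are four collinear triples among them. You partly acknowledge this later, but your sketched B\'ezout/degeneration case analysis on lines $\langle p, A_{ij}\rangle$ is written as if the points were generic and would not survive contact with this special configuration.

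The actual gap is that you never verify independence. Your fallback---semicontinuity plus exhibiting one plane where the rank is full---is legitimate in principle, but you do not carry it out (and appealing to a possible Macaulay2 check is not a proof). So as it stands nothing establishes that $\dim\mathfrak{g}=0$.

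The paper's argument is quite different and handles the special geometry directly. It passes to the blow-up $\widetilde{\alpha}$ of $\alpha$ at $p$ and the six $A_{ij}$, then chooses the unique conic $\delta$ through $p,A_{02},A_{13},A_{03},A_{12}$ (these five points are in general position because $p$ is general, even though the full set of $A_{ij}$ is not). One computes $\widetilde{\gamma}_\alpha\cdot\widetilde{\delta}=0$, obtaining an exact sequence
\[
0\to \mathcal{O}_{\widetilde{\alpha}}(\ell-e_p-e_{01}-e_{23})\to \mathcal{O}_{\widetilde{\alpha}}(\widetilde{\gamma}_\alpha)\to \mathcal{O}_{\widetilde{\delta}}\to 0,
\]
and since $p,A_{01},A_{23}$ are not collinear (again by generality of $p$) the left-hand $h^0$ vanishes, giving $h^0(\widetilde{\gamma}_\alpha)\le h^0(\mathcal{O}_{\widetilde{\delta}})=1$. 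The point is that the paper isolates exactly which non-degeneracy conditions are needed---two of them, both involving $p$ and both guaranteed by its generality---rather than trying to argue that the whole configuration is ``general enough''.
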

\begin{proof}
Let $\mathfrak{g}$ be the linear system of the cubic plane curves on $\alpha$ passing through the six points $\{A_{ij}|0\le i<j \le 3\}$ and having a node at $p$. If the six fixed points had been general, we would have imposed $\frac{2\cdot 3}{2}+\sum_{i=1}^{6} 1 = 9$ independent conditions. In our case the points
$\{A_{ij}|0\le i<j \le 3\}$ are not in general position: indeed, they are the vertices of a complete quadrilateral whose edges are the intersection of the plane $\alpha$ with the four faces of the tetrahedron $T$. Hence $\dim \mathfrak{g}\ge \binom{3+2}{2}- 9 -1= 0$. 
We want to show that the equality holds. 
In order to do it, we take the blow-up $bl : \widetilde{\alpha} \to \alpha$ of the plane $\alpha$ at the points $\{A_{ij}|0\le i<j \le 3\}\cup p$, by denoting the exceptional divisors by $e_{ij} = bl^{-1}(A_{ij})$ and $e_p = bl^{-1}(p)$, for $1\le i<j\le 3$. If we denote by $\ell$ the strict transform of a general line of $\alpha$, then the strict transform of a general $\gamma_{\alpha} \in \mathfrak{g}$ is
$\widetilde{\gamma}_{\alpha} \sim 3\ell - 2e_p -\sum_{0\le i<j \le 3}e_{ij}.$
By the generality of the point $p\in \mathbb{P}^{3}$, we may assume that the five points $p$, $A_{02}$, $A_{13}$, $A_{03}$, $A_{12}$ are in general position, since no three of them are collinear. So we can consider the unique irreducible conic $\delta$ passing through $p$, $A_{02}$, $A_{13}$, $A_{03}$, $A_{12}$, as in Figure~\ref{fig:quadrilateralwithconic}, which has strict transform
$\widetilde{\delta} \sim 2\ell - e_p-e_{02}-e_{13}-e_{03}-e_{12}.$
\begin{figure}[h]
\centering
\includegraphics[scale=0.5]{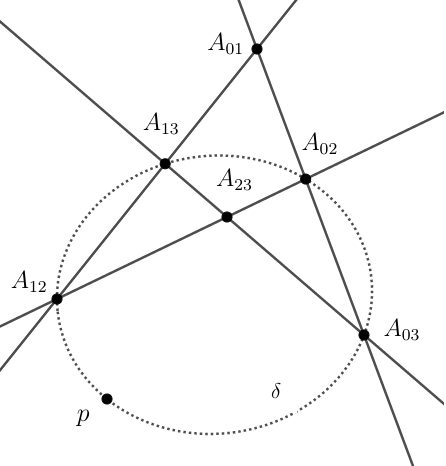}
\caption{\label{fig:quadrilateralwithconic}\scriptsize{The complete quadrilateral on $\alpha$ with vertices at the points $\{A_{ij}|0\le i<j \le 3\}$ and the conic $\delta$ uniquely determined by the points $p, A_{02}, A_{13}, A_{03}, A_{12}$.}}
\end{figure}
Since $\widetilde{\gamma}_{\alpha}\cdot \widetilde{\delta} = 0$, then we have the following exact sequence
$$0 \to \mathcal{O}_{\widetilde{\alpha}}(l-e_p-e_{01}-e_{23}) \to \mathcal{O}_{\widetilde{\alpha}}(\widetilde{\gamma}_{\alpha}) \to \mathcal{O}_{\widetilde{\delta}} \to 0.$$
Obviously $h^0 (\widetilde{\alpha} , \mathcal{O}_{\widetilde{\alpha}}(l-e_p-e_{01}-e_{23}) ) = 0$, since the three points $p, A_{01}, A_{23}$ are not collinear, by the generality of the point $p$ again.
Hence
$h^{0}(\widetilde{\alpha}, \mathcal{O}_{\widetilde{\alpha}}(\widetilde{\gamma}_{\alpha})) \le h^0(\mathcal{O}_{\widetilde{\delta}}) = 1$
and $\dim \mathfrak{g}= h^{0}(\widetilde{\alpha}, \mathcal{O}_{\widetilde{\alpha}}(\widetilde{\gamma}_{\alpha})) -1=0$.
\end{proof}

\begin{lemma}\label{lem:contraction of congruence}
Let $\mathcal{S}_{\bullet}$ be the linear system on $\mathbb{P}^{3}$ given by the sextic surfaces of $\mathbb{P}^{3}$ double along the six edges of the tetrahedron $T$
and 
triple at the general point $p \in \mathbb{P}^{3}$. Then the rational map $\nu_{\bullet} : \mathbb{P}^{3} \dashrightarrow \mathbb{P}^{r_{\bullet}}$ defined by $\mathcal{S}_{\bullet}$ contracts the cubic plane curves $\gamma_{\alpha}$, constructed as above. 
\end{lemma}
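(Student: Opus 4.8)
The plan is to show that for a general plane $\alpha \in \mathcal{A}$, the cubic curve $\gamma_\alpha$ is mapped by $\nu_\bullet$ to a single point, by estimating the intersection number of $\gamma_\alpha$ with a general member $\Sigma_\bullet \in \mathcal{S}_\bullet$ and arguing that it already exceeds the degree $6 = \Sigma_\bullet \cdot \gamma_\alpha$ forced by Bézout, \emph{unless} $\gamma_\alpha$ is contracted. Concretely, I would first note that $\gamma_\alpha$ is a plane cubic lying in the plane $\alpha$ through $p$, so by Bézout $\Sigma_\bullet \cdot \gamma_\alpha = \deg \Sigma_\bullet \cdot \deg \gamma_\alpha = 6 \cdot 3 = 18$ when counted in $\alpha$ via $\Sigma_\bullet \cap \alpha$, which is a plane sextic curve in $\alpha$. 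The key is then to locate the forced local intersection multiplicities of this plane sextic $C_\alpha := \Sigma_\bullet \cap \alpha$ with $\gamma_\alpha$.

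The main input is the description of the singularities that $C_\alpha$ is forced to have. Since $\Sigma_\bullet$ is double along each of the six edges $l_{ij}$ of $T$, the curve $C_\alpha$ has a double point at each of the six points $A_{ij} = \alpha \cap l_{ij}$ (these are the points through which $\gamma_\alpha$ was built to pass). Since $\Sigma_\bullet$ has a triple point at $p$, the curve $C_\alpha$ has (at least) a triple point at $p$, while $\gamma_\alpha$ has only a node at $p$. Now I would compute the contribution of these shared singular points to $C_\alpha \cdot \gamma_\alpha$: at each $A_{ij}$ the local intersection multiplicity is at least $\operatorname{mult}_{A_{ij}}(C_\alpha)\cdot\operatorname{mult}_{A_{ij}}(\gamma_\alpha) = 2\cdot 1 = 2$, contributing at least $6 \cdot 2 = 12$; at $p$ it is at least $\operatorname{mult}_p(C_\alpha)\cdot\operatorname{mult}_p(\gamma_\alpha) = 3 \cdot 2 = 6$. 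So $C_\alpha \cdot \gamma_\alpha \ge 12 + 6 = 18$, and, crucially, \emph{equality is already attained by these imposed points alone}. I would then invoke Proposition~\ref{prop:ri contained in base locus}: the line $r_i$ lies in the base locus of $\mathcal{S}_\bullet$, hence $r_i \subset \Sigma_\bullet$; for a general $\alpha$ through $p$, the plane $\alpha$ meets $l_{0i}$ and $l_{jk}$ at $A_{0i}$ and $A_{jk}$, and the line through these two points together with $p$ — one checks this is exactly the trace of $r_i$ on $\alpha$ when $\alpha$ is chosen generically through $p$ — is a component of $C_\alpha$ that passes through $p$, $A_{0i}$ and $A_{jk}$, all of which lie on $\gamma_\alpha$. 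This gives at least three more points of $\gamma_\alpha \cap \Sigma_\bullet$ beyond what Bézout in $\alpha$ allows, pushing $\Sigma_\bullet \cdot \gamma_\alpha$ strictly above $18$. Since $\gamma_\alpha$ is irreducible (being a nodal plane cubic, assuming $\alpha$ general — this should be recorded, or follows from Lemma~\ref{lem:unique gamma_alpha} together with the uniqueness forcing irreducibility), the only way $\Sigma_\bullet \cdot \gamma_\alpha > \deg \Sigma_\bullet \cdot \deg \gamma_\alpha$ is $\gamma_\alpha \subset \Sigma_\bullet$. As this holds for every $\Sigma_\bullet \in \mathcal{S}_\bullet$, the curve $\gamma_\alpha$ is in the base locus of $\mathcal{S}_\bullet$, and hence $\nu_\bullet$ contracts $\gamma_\alpha$.

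The step I expect to be the main obstacle is the careful bookkeeping of the intersection multiplicities — in particular verifying that the multiplicities at the $A_{ij}$ and at $p$ are the ones stated (no tangency forcing larger numbers, no drop caused by a shared branch), and pinning down exactly how the base lines $r_1, r_2, r_3$ of Proposition~\ref{prop:ri contained in base locus} meet a general plane $\alpha \ni p$ so that their traces genuinely produce \emph{excess} intersection with $\gamma_\alpha$ rather than intersection that was already counted among the $18$. A clean way to handle the latter is to pass to the blow-up $\widetilde{\alpha}$ introduced in the proof of Lemma~\ref{lem:unique gamma_alpha}: there $\widetilde{\gamma}_\alpha \sim 3\ell - 2e_p - \sum_{0\le i<j\le 3} e_{ij}$, and I would express the strict transform $\widetilde{C}_\alpha$ of $C_\alpha$ (or of the relevant components of it, e.g.\ the traces of the $r_i$) in the same basis, then compute $\widetilde{\gamma}_\alpha \cdot \widetilde{C}_\alpha$ on $\widetilde{\alpha}$; showing this number is negative, or equivalently that $\widetilde{C}_\alpha$ and $\widetilde{\gamma}_\alpha$ still share a point after separating the imposed singularities, forces $\gamma_\alpha \subset \Sigma_\bullet$. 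Alternatively, and perhaps more cleanly, one observes directly that $\gamma_\alpha$ passes through the three extra points $\langle p, A_{0i}\rangle \cap l_{jk}$-type points cut out on $\gamma_\alpha$ by the base lines, already accounted above, so the excess is manifest.
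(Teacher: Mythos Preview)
Your computation that the Bézout number $\Sigma_\bullet \cdot \gamma_\alpha = 18$ is entirely absorbed by the imposed base points ($6$ at $p$ from $\operatorname{mult}_p(\Sigma_\bullet)\cdot\operatorname{mult}_p(\gamma_\alpha)=3\cdot 2$, and $12$ at the six $A_{ij}$ from $2\cdot 1$ each) is exactly the content of the paper's proof, and at that point you are done: since the free part of $\mathcal{S}_\bullet|_{\gamma_\alpha}$ has degree $18-18=0$, the map $\nu_\bullet$ is constant on $\gamma_\alpha$, i.e.\ $\gamma_\alpha$ is contracted to a point. That is precisely what ``contracted'' means here.

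The remainder of your proposal goes astray. You attempt to push the intersection number \emph{strictly above} $18$ in order to force $\gamma_\alpha \subset \Sigma_\bullet$ for every $\Sigma_\bullet$, and hence conclude that $\gamma_\alpha$ lies in the base locus of $\mathcal{S}_\bullet$. But this is a different (and false) statement: the paper's Corollary~\ref{cor:onlybasecurves edgesT and ri} shows that the only base curves of $\mathcal{S}_\bullet$ are the edges of $T$ and the three lines $r_1,r_2,r_3$, so $\gamma_\alpha$ is \emph{not} in the base locus. Being contracted by the map defined by a linear system is not the same as lying in its base locus.

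Your mechanism for producing excess intersection is also geometrically incorrect. The line $r_i$ is, by definition, contained in the specific plane $\langle p, l_{0i}\rangle$; for a \emph{general} plane $\alpha$ through $p$ one has $r_i \cap \alpha = \{p\}$, so $r_i$ leaves no linear trace in $\alpha$ and contributes nothing beyond what is already counted at $p$. Relatedly, for general $\alpha$ the points $p$, $A_{0i}$, $A_{jk}$ are \emph{not} collinear (this non-collinearity is in fact used in the proof of Lemma~\ref{lem:unique gamma_alpha}). So the ``excess'' you propose does not exist; the equality $18=18$ is sharp, and it is exactly that equality --- not any strict inequality --- which yields the contraction.
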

\begin{proof}
By Bezout's Theorem, a general element $\Sigma_{\bullet}\in\mathcal{S}_{\bullet}$ intersects a cubic plane curve $\gamma_{\alpha}$ in $6\cdot 3 = 18$ points. Furthermore, $\Sigma_{\bullet}$ and $\gamma_{\alpha}$ have in common, in the base locus of $\mathcal{S}_{\bullet}$, the point $p$ (which is a triple point for $\Sigma_{\bullet}$ and a node for $\gamma_{\alpha}$) and the six points $\{A_{ij}|0\le i<j \le 3\}$ (which are nodes for $\Sigma_{\bullet}$ and simple points for $\gamma_{\alpha}$). Hence, outside the base locus, we have that $\Sigma_{\bullet}\cap \gamma_{\alpha}$ is given by $6\cdot 3 - \sum_{i=1}^{6}2\cdot 1 - 3\cdot 2 = 0$ points. So $\gamma_{\alpha}$ is contracted to a point by $\nu_{\bullet}: \mathbb{P}^3 \dashrightarrow \mathbb{P}^{r_{\bullet}}$.
\end{proof}

\begin{remark}\label{rem:generalfibreM2}
Thanks to a computational analysis via Macaulay2, one can see that the general fibre of the rational map $\nu_{\bullet} : \mathbb{P}^{3} \dashrightarrow \mathbb{P}^{r_{\bullet}}$ defined by $\mathcal{S}_{\bullet}$ is a cubic plane curve $\gamma_{\alpha}$ (see Code~\ref{code:castelnuovoClassical} of Appendix).
\end{remark}

\begin{proposition}
The cubic plane curves $\gamma_{\alpha}$, defined as above, give a congruence $\mathcal{V}$ of curves of $\mathbb{P}^3$.
\end{proposition}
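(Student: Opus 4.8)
The plan is to verify the two defining properties of a congruence (Definition~\ref{def:congruence}): that the curves $\gamma_{\alpha}$ form a two-dimensional irreducible family, and that exactly one of them passes through a general point of $\mathbb{P}^3$. First I would observe that the curves $\gamma_{\alpha}$ are naturally parametrized by the planes $\alpha$ through $p$, since by Lemma~\ref{lem:unique gamma_alpha} each such general plane $\alpha$ determines a unique cubic $\gamma_{\alpha}$. The family of planes through the fixed point $p$ is the linear system $\mathcal{A}$, which is a $\mathbb{P}^2$; hence the family $\{\gamma_{\alpha}\}$ is parametrized by (a dense open subset of) $\mathcal{A}\cong\mathbb{P}^2$, so it is two-dimensional and irreducible.

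Next I would check the incidence condition. Let $x\in\mathbb{P}^3$ be a general point (in particular $x\neq p$ and $x$ not on any edge of $T$). Any curve $\gamma_{\alpha}$ passing through $x$ must lie in a plane $\alpha$ containing both $p$ and $x$; since $p\neq x$, such a plane contains the line $\overline{px}$, and these planes form a pencil (a $\mathbb{P}^1$ inside $\mathcal{A}$). So at this stage it is \emph{not} immediate that only one $\gamma_{\alpha}$ passes through $x$ — this is the main obstacle. To resolve it, I would invoke Lemma~\ref{lem:contraction of congruence} together with Remark~\ref{rem:generalfibreM2}: the rational map $\nu_{\bullet}:\mathbb{P}^3\dashrightarrow\mathbb{P}^{r_{\bullet}}$ contracts each $\gamma_{\alpha}$, and by the computational analysis its general fibre is \emph{exactly} a single curve of the form $\gamma_{\alpha}$. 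Thus for general $x$, the fibre $\nu_{\bullet}^{-1}(\nu_{\bullet}(x))$ is one cubic $\gamma_{\alpha}$ passing through $x$; and any other $\gamma_{\beta}$ through $x$ would be contracted to the same point $\nu_{\bullet}(x)$, hence would be contained in that fibre, forcing $\gamma_{\beta}=\gamma_{\alpha}$ by Remark~\ref{rem:generalfibreM2}. This gives uniqueness.

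Putting the two parts together: the family $\mathcal{V}:=\{\gamma_{\alpha}\}_{\alpha\in\mathcal{A}}$ is a two-dimensional irreducible family of curves in $\mathbb{P}^3$ through a general point of which passes exactly one member, so $\mathcal{V}$ is a congruence of curves of $\mathbb{P}^3$ in the sense of Definition~\ref{def:congruence}. I expect the only genuinely delicate point to be the uniqueness step, and the cleanest route is the one above, leveraging the identification of the general fibre of $\nu_{\bullet}$ already recorded in Remark~\ref{rem:generalfibreM2}; an alternative, purely geometric argument would require showing directly that among the pencil of planes through $\overline{px}$ only one yields a cubic $\gamma_{\alpha}$ actually containing $x$, which amounts to the same computation in disguise.
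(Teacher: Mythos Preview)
Your proposal is correct and follows essentially the same approach as the paper: the two-dimensionality and irreducibility of $\mathcal{V}$ come from the parametrization by $\mathcal{A}\cong\mathbb{P}^2$ via Lemma~\ref{lem:unique gamma_alpha}, and uniqueness through a general point is obtained by identifying the $\gamma_{\alpha}$ with the general fibres of $\nu_{\bullet}$ via Lemma~\ref{lem:contraction of congruence} and Remark~\ref{rem:generalfibreM2}. Your discussion of the pencil of planes through $\overline{px}$ is a helpful aside but not needed, since the fibre argument already gives uniqueness directly.
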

\begin{proof}
By Lemma~\ref{lem:unique gamma_alpha} we have that the set of the cubic plane curves $\gamma_{\alpha}$ is a $2$-dimensional family $\mathcal{V}$. In particular $\mathcal{V}$ is birationally parametrized by the same projective plane $\mathbb{P}^{2}$ parametrizing the planes passing through $p$. It remains to show that, given a general point $p' \in \mathbb{P}^{3}$, there is a unique curve of $\mathcal{V}$ passing through it. By Lemma~\ref{lem:contraction of congruence} and Remark~\ref{rem:generalfibreM2} we have that the curves of $\mathcal{V}$ are the general fibres of the rational map $\nu_{\bullet}: \mathbb{P}^{3} \dashrightarrow \mathbb{P}^{r_{\bullet}}$ defined by $\mathcal{S}_{\bullet}$. Hence $\nu_{\bullet}^{-1}(\nu_{\bullet}(p'))$ is the unique curve of $\mathcal{V}$ passing through $p'$.
\end{proof}

\begin{corollary}\label{cor: imageS* = surface}
Let $\mathcal{S}_{\bullet}$ be the linear system on $\mathbb{P}^{3}$ given by the sextic surfaces of $\mathbb{P}^{3}$ double along the six edges of the tetrahedron $T$
and 
triple at the general point $p \in \mathbb{P}^{3}$. Then the image of $\mathbb{P}^{3}$ via the rational map $\nu_{\bullet} : \mathbb{P}^{3} \dashrightarrow \mathbb{P}^{r_{\bullet}}$ defined by $\mathcal{S}_{\bullet}$ is a surface $\Delta \subset \mathbb{P}^{r_{\bullet}}$.
\end{corollary}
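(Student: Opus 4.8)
The plan is to combine the results already established about the rational map $\nu_{\bullet}:\mathbb{P}^3\dashrightarrow \mathbb{P}^{r_{\bullet}}$ defined by $\mathcal{S}_{\bullet}$, namely that its general fibre is a curve, together with the dimension bound $r_{\bullet}\geq 3$ coming from the naive count $\dim \mathcal{S}_{\bullet}\geq \dim\mathcal{S}-10=3$. Once we know the general fibre is a curve (so $\dim\Delta\leq 2$), it suffices to rule out $\dim\Delta\leq 1$ in order to conclude that $\Delta$ is a surface.

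First I would invoke the preceding Proposition together with Remark~\ref{rem:generalfibreM2} and Lemma~\ref{lem:contraction of congruence}: the cubic plane curves $\gamma_{\alpha}$ sweep out a congruence $\mathcal{V}$ of curves of $\mathbb{P}^3$, and the general fibre of $\nu_{\bullet}$ is precisely such a curve $\gamma_{\alpha}$. Since through a general point of $\mathbb{P}^3$ there passes exactly one curve of $\mathcal{V}$, the general fibre is one-dimensional, so $\Delta=\overline{\nu_{\bullet}(\mathbb{P}^3)}$ has dimension at most $2$. To finish, I would exclude the possibility that $\Delta$ is a curve: if $\Delta$ were a curve in $\mathbb{P}^{r_{\bullet}}$ with $r_{\bullet}\geq 3$, then the general fibre of $\nu_{\bullet}$ would be a surface (a threefold mapping onto a curve has two-dimensional general fibres), contradicting the fact just established that the general fibre is the curve $\gamma_{\alpha}$. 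Since $r_{\bullet}\geq 3$ also forces $\Delta$ not to be a point, we are left with $\dim\Delta=2$, i.e.\ $\Delta$ is a surface in $\mathbb{P}^{r_{\bullet}}$.

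The main obstacle, such as it is, is purely bookkeeping: one must be careful that the dimension count $r_{\bullet}\geq 3$ is only an inequality a priori, and that the statement of the corollary does not assert $r_{\bullet}=3$ — that refinement comes later when $\Delta$ is identified with a cubic Del Pezzo surface in $\mathbb{P}^3$. Here all we need is $r_{\bullet}\geq 3>1$ to rule out the degenerate targets, and the identification of the general fibre with $\gamma_{\alpha}$ (Lemma~\ref{lem:contraction of congruence} and Remark~\ref{rem:generalfibreM2}) to pin the fibre dimension at exactly one. Thus the proof is a two-line argument: general fibre one-dimensional $\Rightarrow \dim\Delta\leq 2$; $r_{\bullet}\geq 3 \Rightarrow \dim\Delta\neq 0$ and, combined with the fibre being a curve, $\dim\Delta\neq 1$; hence $\dim\Delta=2$.
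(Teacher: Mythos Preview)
Your proposal is correct and follows essentially the same approach as the paper: both invoke Lemma~\ref{lem:contraction of congruence} and Remark~\ref{rem:generalfibreM2} to identify the general fibre of $\nu_{\bullet}$ as a cubic plane curve and conclude $\dim\Delta=2$. The paper is simply terser, applying the fibre-dimension formula $\dim\Delta=3-1=2$ directly rather than bounding from above and below as you do; your extra invocation of $r_{\bullet}\ge 3$ is harmless but not needed once the general fibre is known to be exactly one-dimensional.
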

\begin{proof}
Let $\Delta$ be the image of $\mathbb{P}^{3}$ via $\nu_{\bullet}$. By Lemma~\ref{lem:contraction of congruence} and Remark~\ref{rem:generalfibreM2}, the general fibre of $\nu_{\bullet}$ is a cubic plane curve, so we have $\dim \Delta = 3-1 = 2$.
\end{proof}

Let us now pay attention to a particular surface of $\mathbb{P}^{3}$. Let us consider the linear system $\mathfrak{c}$ on $\mathbb{P}^{2}$ given by the cubic plane curves passing through the six vertices of a complete quadrilateral. The image of $\mathbb{P}^{2}$ via the rational map defined by $\mathfrak{c}$ is a special Del Pezzo surface of degree $3$, which is called \textit{Cayley cubic surface} (see \cite[\S 9.2.2]{Dolg12}). This surface has four singular points whose tangent cone is a quadric cone: we will refer to these singularities as \textit{nodes}. The four nodes of the Cayley cubic surface are given by the image of the four edges of the fixed complete quadrilateral.

\begin{theorem}\label{thm: image Cayley cubic}
Let $\mathcal{S}_{\bullet}$ be the linear system on $\mathbb{P}^{3}$ given by the sextic surfaces of $\mathbb{P}^{3}$ double along the six edges of the tetrahedron $T$
and 
triple at the general point $p \in \mathbb{P}^{3}$. Then the image of $\mathbb{P}^{3}$ via the rational map $\nu_{\bullet} : \mathbb{P}^{3} \dashrightarrow \mathbb{P}^{r_{\bullet}}$ defined by $\mathcal{S}_{\bullet}$ is a Cayley cubic surface $\Delta \subset \mathbb{P}^{3}$. Thus, $r_{\bullet} = 3$.
\end{theorem}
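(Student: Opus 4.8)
The plan is to identify the image surface $\Delta$ explicitly by reducing the three-dimensional picture on $\mathbb{P}^3$ to a two-dimensional one on a generic plane $\alpha$ through $p$. Since by Corollary~\ref{cor: imageS* = surface} the image $\Delta$ is a surface and by Lemma~\ref{lem:contraction of congruence} the fibres of $\nu_\bullet$ are the cubic curves $\gamma_\alpha$, it suffices to understand how $\mathcal{S}_\bullet$ restricts to a general plane $\alpha$ through $p$. First I would restrict: a general $\Sigma_\bullet \in \mathcal{S}_\bullet$ meets $\alpha$ in a plane sextic curve which is singular at $p$ (triple point) and at the six points $A_{ij} = \alpha \cap l_{ij}$ (double points), so the restricted linear system $\mathcal{S}_\bullet|_\alpha$ is contained in the system of plane sextics with a triple point at $p$ and nodes at the $A_{ij}$. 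I would then observe that $\gamma_\alpha$, the unique cubic of Lemma~\ref{lem:unique gamma_alpha}, is a fixed component: indeed $\gamma_\alpha$ passes through $p$ (with a node) and through all six $A_{ij}$, and a Bézout count like the one in Lemma~\ref{lem:contraction of congruence} forces every $\Sigma_\bullet$ to contain $\gamma_\alpha$. So $\mathcal{S}_\bullet|_\alpha = \gamma_\alpha + \mathfrak{m}$, where $\mathfrak{m}$ is a residual system of plane \emph{cubics}; and the passage from $\Sigma_\bullet$ to its residual cubic is precisely the restriction of $\nu_\bullet$ composed with the contraction of the fibre $\gamma_\alpha$ to a point, so $\Delta$ is the image of $\alpha \cong \mathbb{P}^2$ under the rational map given by $\mathfrak{m}$.

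Next I would pin down $\mathfrak{m}$. The residual cubics pass through $p$ and through the six vertices $A_{ij}$ of the complete quadrilateral cut on $\alpha$ by the four faces of $T$ — but passing through $p$ is automatic: the triple point of $\Sigma_\bullet$ at $p$ means the sextic has multiplicity $3$ at $p$, and after removing $\gamma_\alpha$ (which has multiplicity $1$ at $p$, being nodal there — one must check the cubic is not tangent to a sextic branch in a way that absorbs more) the residual cubic passes through $p$ simply, and in fact the node of $\gamma_\alpha$ at $p$ together with the triple point accounts for the incidence so that $p$ imposes no further condition on $\mathfrak{m}$ beyond what the $A_{ij}$ already give. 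The upshot is that $\mathfrak{m}$ is exactly the linear system $\mathfrak{c}$ of plane cubics through the six vertices of a complete quadrilateral: $\dim \mathfrak{c} = \binom{5}{2} - 1 - 6 = 3$, which matches the computed $r_\bullet \ge 3$ and forces $r_\bullet = 3$. By the definition recalled just before the statement, the image of $\mathbb{P}^2$ under $\mathfrak{c}$ is the Cayley cubic surface in $\mathbb{P}^3$, so $\Delta$ is a Cayley cubic and $r_\bullet = 3$.

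The main obstacle I expect is the bookkeeping in the residuation step: verifying that after stripping the fixed cubic $\gamma_\alpha$ from $\mathcal{S}_\bullet|_\alpha$ the residual system is \emph{all} of $\mathfrak{c}$ and not a proper subsystem, and that the multiplicities at $p$ and at the $A_{ij}$ subtract off exactly as claimed (no extra base conditions, no extra tangency). One has to be careful that $\mathcal{S}_\bullet|_\alpha$ is genuinely three-dimensional — equivalently that restriction to $\alpha$ does not drop the dimension — which should follow because $\alpha$ is general through $p$ and the lines $r_1, r_2, r_3$ of Proposition~\ref{prop:ri contained in base locus} meet a general such $\alpha$ in points already accounted for by the $A_{ij}$-nodes, so no component of the base locus of $\mathcal{S}_\bullet$ forces $\mathcal{S}_\bullet|_\alpha$ to be smaller than expected. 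The remaining verification — that the six $A_{ij}$ are indeed the vertices of a complete quadrilateral, so that the classical identification of $\mathfrak{c}$'s image with the Cayley cubic applies — was already established inside the proof of Lemma~\ref{lem:unique gamma_alpha}, and the four nodes of $\Delta$ correspond to the images of the four faces of $T$ (equivalently the four edges of the quadrilateral), matching the description recalled before the theorem. A dimension count then closes the argument: $\dim \Delta = 2$, $\Delta \subset \mathbb{P}^{r_\bullet}$ has degree $3$, and since the Cayley cubic spans $\mathbb{P}^3$ we conclude $r_\bullet = 3$.
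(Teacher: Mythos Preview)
Your plan to restrict to a general plane $\alpha$ through $p$ is the same as the paper's, but the key step --- that $\gamma_\alpha$ is a fixed component of $\mathfrak{s}:=\mathcal{S}_\bullet|_\alpha$ --- is false, and this breaks the argument. The B\'ezout count you invoke shows only that the plane sextic $\Sigma_\bullet|_\alpha$ and the cubic $\gamma_\alpha$ meet in $6\cdot 3=18$ points, and the assigned multiplicities ($3\cdot 2$ at $p$ plus $6\cdot(2\cdot 1)$ at the $A_{ij}$) account for all of them; this is perfectly compatible with $\gamma_\alpha\not\subset\Sigma_\bullet|_\alpha$. Indeed, since $\gamma_\alpha$ is contracted by $\nu_\bullet$ to a single point of $\Delta$, only the codimension-one subsystem of $\mathcal{S}_\bullet$ corresponding to hyperplanes through that point contains $\gamma_\alpha$; a general $\Sigma_\bullet$ does not. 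A cleaner obstruction is the self-intersection: on the blow-up of $\alpha$ at $p$ and the $A_{ij}$ the strict transform of $\mathfrak{s}$ has class $6\ell-3e_p-2\sum e_{ij}$, whose square is $36-9-24=3$, matching $\deg\Delta=3$ with no room for a fixed part.

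There is also an internal inconsistency in your residuation. A node has multiplicity $2$, so if $\gamma_\alpha$ were split off, the residual cubic would be forced to pass through $p$ (with multiplicity $\ge 3-2=1$). Hence $\mathfrak{m}$ would sit inside the system of cubics through $p$ \emph{and} the six $A_{ij}$, which has dimension at most $9-7=2$; this contradicts $r_\bullet=3$ and cannot equal the $3$-dimensional system $\mathfrak{c}$ of cubics through a complete quadrilateral. The paper avoids this by never claiming a fixed component: it shows $\nu_\bullet|_\alpha$ is generically $1{:}1$ and then applies three standard quadratic (Cremona) transformations centred at $\{p,A_{12},A_{03}\}$, $\{p',B_{23},B_{01}\}$, $\{p'',C_{13},C_{02}\}$ to carry $\mathfrak{s}$ birationally to $\mathfrak{c}$; the sequence drops the degree $6\to 5\to 4\to 3$ while tracking the quadrilateral, and the Cayley-cubic identification then follows from the classical description recalled before the theorem.
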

\begin{proof}
Let us take a general element $\alpha\in \mathcal{A}$, i.e. a general plane passing through $p$. If we restrict the linear system $\mathcal{S}_{\bullet}$ to this plane, we obtain the linear system $\mathfrak{s}$ on $\alpha$ of the sextic plane curves with triple point at $p$ and nodes at the six points $\{A_{ij}|0\le i<j \le 3\}$. The plane $\alpha$ and a general fibre of $\nu_{\bullet}$ intersect, outside the base locus of $\mathcal{S}_{\bullet}$, at a single point: indeed, the general fibre of $\nu_{\bullet}$ is a cubic plane curve $\gamma_{\alpha'}$ contained in a plane $\alpha' \in \mathcal{A}$, where $\alpha'\neq \alpha $; so we have that $\alpha$ intersects $\gamma_{\alpha'}$, outside the base locus of $\mathcal{S}_{\bullet}$, at $1\cdot 3 - 1\cdot 2 = 1$ point. Then the linear system $\mathfrak{s}$ defines the rational map $\nu_{\bullet}|_{\alpha} : \alpha \cong \mathbb{P}^{2} \dashrightarrow \mathbb{P}^{r_{\bullet}}$ which is generically $1:1$.
In the following we will see that, by applying three quadratic transformations, we obtain, from $\mathfrak{s}$, the linear system $\mathfrak{c}$ of the cubic plane curves passing through the six vertices of a complete quadrilateral. Thus, the image of $\alpha$ via $\nu_{\bullet}|_{\alpha}$ is the image of $\mathbb{P}^{2}$ via the rational map defined by $\mathfrak{c}$, that is a Cayley cubic surface. 
By Corollary~\ref{cor: imageS* = surface}, this is the image $\Delta$ of $\mathbb{P}^{3}$ via $\nu_{\bullet} : \mathbb{P}^3 \dashrightarrow \mathbb{P}^{r_{\bullet}}$. Hence $r_{\bullet} = 3$.

Let us recall that the four faces of the tetrahedron $T$ intersect the plane $\alpha$ along four lines: the line $\left\langle A_{01}, A_{02}, A_{03} \right\rangle$ passing through $A_{01}$, $A_{02}$, $A_{03}$, the line $\left\langle A_{01}, A_{12}, A_{13} \right\rangle$ passing through $A_{01}$, $A_{12}$, $A_{13}$, the line $\left\langle A_{02}, A_{12}, A_{23} \right\rangle$ passing through $A_{02}$, $A_{12}$, $A_{23}$ and the line $\left\langle A_{03}, A_{13}, A_{23} \right\rangle$ passing through $A_{03}$, $A_{13}$, $A_{23}$. These four lines are the edges of a complete quadrilateral $Q_A$ with six vertices at the points $\{A_{ij}|0\le i<j \le 3\}$.
Hence $\mathfrak{s}$ is the linear system of the sextic plane curves triple at $p$ e double at the six vertices of $Q_A$. 
Let us consider the quadratic trasformation $q_{p,A_{12},A_{03}} : \mathbb{P}^{2} \dashrightarrow \mathbb{P}^{2}$ given by the linear system of the conics passing through the three points $p,A_{12},A_{03}$. Let $B_{23}$, $B_{13}$, $B_{01}$, $B_{02}$ be the images of the points $A_{23}$, $A_{13}$, $A_{01}$, $A_{02}$. We have that each of the lines $\left\langle p, A_{12} \right\rangle$, $\left\langle p, A_{03} \right\rangle$ and $\left\langle A_{12}, A_{03} \right\rangle$ is contracted by $q_{p,A_{12},A_{03}}$ to a point, denoted respectively by $B_{03}$, $B_{12}$ and $p'$. Furthermore, the four edges of the complete quadrilateral $Q_A$ are sent to the four edges of a new complete quadrilateral $Q_B$ with six vertices at the points $\{B_{ij}|0\le i<j \le 3\}$: in particular we have that
$$q_{p,A_{12},A_{03}}(\left\langle A_{01}, A_{02}, A_{03}\right\rangle ) = \left\langle B_{01}, B_{02}, B_{03}\right\rangle ,$$
$$q_{p,A_{12},A_{03}}(\left\langle A_{01}, A_{12}, A_{13}\right\rangle ) = \left\langle B_{01}, B_{12}, B_{13}\right\rangle ,$$
$$q_{p,A_{12},A_{03}}(\left\langle A_{02}, A_{12}, A_{23}\right\rangle ) = \left\langle B_{02}, B_{12}, B_{23}\right\rangle ,$$
$$q_{p,A_{12},A_{03}}(\left\langle A_{03}, A_{13}, A_{23}\right\rangle ) = \left\langle B_{03}, B_{13}, B_{23}\right\rangle .$$
Thus, the linear system $\mathfrak{s}$ of the sextic plane curves triple at the point $p$ and double at the six points $\{A_{ij}|0\le i<j \le 3\}$ is transformed in the linear system $\mathfrak{q}_{5}$ of the quintic plane curves double at $p'$, $B_{23}$, $B_{13}$, $B_{01}$, $B_{02}$ and passing through $B_{12}$ and $B_{03}$.
Let us consider the quadratic trasformation $q_{p',B_{23},B_{01}} : \mathbb{P}^{2} \dashrightarrow \mathbb{P}^{2}$ given by the linear system of the conics passing through the three points $p'$, $B_{23}$, $B_{01}$. Let $C_{13}$, $C_{12}$, $C_{02}$, $C_{03}$ be the images of the points $B_{13}$, $B_{12}$, $B_{02}$, $B_{03}$. We have that each of the lines $\left\langle p', B_{23} \right\rangle$, $\left\langle p', B_{01} \right\rangle$ and $\left\langle B_{23}, B_{01} \right\rangle$ is contracted by $q_{p',B_{23},B_{01}}$ to a point, denoted respectively by $C_{01}$, $C_{23}$ and $p''$. Furthermore, the four edges of the complete quadrilateral $Q_B$ are sent to the four edges of a new complete quadrilateral $Q_C$ with six vertices at the points $\{C_{ij}|0\le i<j \le 3\}$, in the following way:
$$q_{p',B_{23},B_{01}}(\left\langle B_{01}, B_{02}, B_{03}\right\rangle ) = \left\langle C_{01}, C_{02}, C_{03}\right\rangle ,$$ 
$$q_{p',B_{23},B_{01}}(\left\langle B_{01}, B_{12}, B_{13}\right\rangle ) = \left\langle C_{01}, C_{12}, C_{13}\right\rangle ,$$
$$q_{p',B_{23},B_{01}}(\left\langle B_{02}, B_{12}, B_{23}\right\rangle ) = \left\langle C_{02}, C_{12}, C_{23}\right\rangle ,$$
$$q_{p',B_{23},B_{01}}(\left\langle B_{03}, B_{13}, B_{23}\right\rangle ) = \left\langle C_{03}, C_{13}, C_{23}\right\rangle .$$
As a result, the linear system $\mathfrak{q}_5$ of the quintic plane curves double at $p'$, $B_{23}$, $B_{13}$, $B_{01}$, $B_{02}$ and passing through $B_{12}$ and $B_{03}$ is transformed in the linear system $\mathfrak{q}_{4}$ of the quartic plane curves double at $C_{13}$ and $C_{02}$ and passing through $p''$, $C_{23}$, $C_{12}$, $C_{01}$, $C_{03}$.
Let us consider the quadratic trasformation $q_{p'',C_{13},C_{02}} : \mathbb{P}^{2} \dashrightarrow \mathbb{P}^{2}$ given by the linear system of the conics passing through the three points $p''$, $C_{13}$, $C_{02}$. Let $D_{23}$, $D_{12}$ $D_{01}$, $D_{03}$ be the images of the points $C_{23}$, $C_{12}$, $C_{01}$, $C_{03}$. We have that each of the lines $\left\langle p'', C_{13} \right\rangle$, $\left\langle p'', C_{02} \right\rangle$ and $\left\langle C_{13}, C_{02} \right\rangle$ are contracted by $q_{p'',C_{13},C_{02}}$ to a point, denoted respectively with $D_{02}$, $D_{13}$ and $p'''$. Furthermore, the four edges of the complete quadrilateral $Q_C$ are sent to the four edges of a new complete quadrilateral $Q_D$ with six vertices the points $\{D_{ij}|0\le i<j \le 3\}$:
$$q_{p'',C_{13},C_{02}}(\left\langle C_{01}, C_{02}, C_{03}\right\rangle ) = \left\langle D_{01}, D_{02}, D_{03}\right\rangle ,$$
$$q_{p'',C_{13},C_{02}}(\left\langle C_{01}, C_{12}, C_{13}\right\rangle ) = \left\langle D_{01}, D_{12}, D_{13}\right\rangle ,$$
$$q_{p'',C_{13},C_{02}}(\left\langle C_{02}, C_{12}, C_{23}\right\rangle ) = \left\langle D_{02}, D_{12}, D_{23}\right\rangle ,$$
$$q_{p'',C_{13},C_{02}}(\left\langle C_{03}, C_{13}, C_{23}\right\rangle ) = \left\langle D_{03}, D_{13}, D_{23}\right\rangle .$$
Then the linear system $\mathfrak{q}_4$ of the quartic plane curves double at $C_{13}$ and $C_{02}$ and passing through $p'''$, $C_{23}$, $C_{12}$, $C_{01}$, $C_{03}$ is transformed in the linear system $\mathfrak{c}$ of the cubic plane curves passing through $\{D_{ij}|0\le i<j \le 3\}$, which are the six vertices of a complete quadrilateral $Q_D$.
\end{proof}

\begin{corollary}\label{cor:onlybasecurves edgesT and ri}
Let $\mathcal{S}_{\bullet}$ be the linear system on $\mathbb{P}^{3}$ given by the sextic surfaces of $\mathbb{P}^{3}$ double along the six edges of the tetrahedron $T$
and 
triple at the general point $p \in \mathbb{P}^{3}$. The only base curves of $\mathcal{S}_{\bullet}$ are the six edges of $T$ and the lines $r_1$, $r_2$, $r_3$.
\end{corollary}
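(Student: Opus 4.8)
The plan is to bound the base locus of $\mathcal{S}_{\bullet}$ from above, knowing from Proposition~\ref{prop:ri contained in base locus} that the six edges of $T$ and the lines $r_1, r_2, r_3$ are contained in it. First I would recall that $\mathcal{S}_{\bullet}$ is a sublinear system of $\mathcal{S}$, whose base locus is exactly the six edges of $T$ (this is implicit in the fact that $\nu_{\mathcal{S}}$ is birational and very ample off $T$, as explained in the description of the classical Enriques-Fano threefold). So the base curves of $\mathcal{S}_{\bullet}$ consist of the six edges of $T$ together with whatever new curves are forced by imposing the triple point at $p$. The key is to show that the only such new base curves are $r_1, r_2, r_3$.

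**The main argument via the restriction to a general plane.** The natural tool is the analysis already carried out in the proof of Theorem~\ref{thm: image Cayley cubic}. There we saw that restricting $\mathcal{S}_{\bullet}$ to a general plane $\alpha \in \mathcal{A}$ through $p$ gives the linear system $\mathfrak{s}$ of sextic plane curves triple at $p$ and double at the six vertices $\{A_{ij}\}$ of the complete quadrilateral $Q_A$; and three quadratic transformations identify $\mathfrak{s}$ with the linear system $\mathfrak{c}$ of cubics through the six vertices of a complete quadrilateral. The linear system $\mathfrak{c}$ has \emph{no base curves} (its only base points are the six vertices). Tracing this back through the three quadratic transformations, the base curves of $\mathfrak{s}$ on $\alpha$ are precisely the lines contracted by the composed Cremona map: these are the four edges of $Q_A$ (each edge of the quadrilateral is a fixed component of $\mathfrak{s}$ — indeed a sextic double at three collinear points automatically contains the line through them, with multiplicity, once one also accounts for the triple point appropriately), plus possibly a few more lines introduced at intermediate steps. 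The four edges of $Q_A$ are exactly $\alpha \cap f_0$, $\alpha \cap f_1$, $\alpha \cap f_2$, $\alpha \cap f_3$, the traces of the four faces of $T$. Now a base curve $C$ of $\mathcal{S}_{\bullet}$ in $\mathbb{P}^3$ meets the general plane $\alpha$ in $(\deg C)$ points, each of which must be a base point of $\mathfrak{s}$; a base curve lying in a face $f_i$ of $T$ would meet $\alpha$ along $\alpha\cap f_i$, which is accounted for, so I expect the faces themselves are \emph{not} base (they contain the edges, which are the genuine base curves), and one checks that the only curves of $\mathbb{P}^3$ sweeping out points of $Q_A$ as $\alpha$ varies are the six edges $l_{ij}$ and the three lines $r_i$.

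**Filling in the bookkeeping.** More precisely, I would argue as follows. Let $C$ be an irreducible base curve of $\mathcal{S}_{\bullet}$ which is not an edge of $T$. For general $\alpha \in \mathcal{A}$ the intersection $C \cap \alpha$ is a set of points each forced to be a base point of $\mathfrak{s} = \mathcal{S}_{\bullet}|_\alpha$; but the base points of $\mathfrak{s}$ are precisely the six vertices $A_{ij} = \alpha \cap l_{ij}$ together with $p$ itself (this follows from the identification with $\mathfrak{c}$, which has the six vertices of $Q_D$ as its only base points). Since $p$ is fixed and $A_{ij} \in l_{ij}$, either $C$ passes through $p$ or $C$ meets every edge $l_{ij}$ — and in fact $C$ must, for general $\alpha$, have its intersection with $\alpha$ landing on the $A_{ij}$. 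A curve through $p$ meeting two disjoint edges $l_{0i}$ and $l_{jk}$ at points collinear with $p$ is forced (by the planarity of $\langle p, l_{0i}\rangle$) to be the line $r_i$; a curve not through $p$ that meets all six edges must be one of the $l_{ij}$ themselves. This yields the six edges and $r_1, r_2, r_3$ as the complete list. \textbf{The main obstacle} I anticipate is the precise combinatorial bookkeeping of which lines get contracted at the three intermediate quadratic transformations in the proof of Theorem~\ref{thm: image Cayley cubic}: one must verify that no spurious base line is introduced at an intermediate stage that would trace back to a genuine base curve of $\mathcal{S}_{\bullet}$ in $\mathbb{P}^3$ other than the nine already found — equivalently, that every fixed component of $\mathfrak{s}$ on $\alpha$ is one of the four traces $\alpha \cap f_i$ and that these are swept out exactly by the edges of $T$. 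Once that is checked, combined with Proposition~\ref{prop:ri contained in base locus} for the reverse inclusion, the corollary follows.
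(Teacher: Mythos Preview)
Your approach is genuinely different from the paper's, and considerably more involved. The paper gives a short degree argument directly in $\mathbb{P}^3$: by Theorem~\ref{thm: image Cayley cubic} we know $\deg\Delta=3$, and by Lemma~\ref{lem:contraction of congruence} and Remark~\ref{rem:generalfibreM2} the general fibre of $\nu_\bullet$ is a plane cubic. Hence two general $\Sigma_\bullet,\Sigma_\bullet'\in\mathcal{S}_\bullet$ meet, outside the base locus, in exactly $\deg\Delta=3$ such fibres, a curve of total degree $9$. Since $\Sigma_\bullet\cdot\Sigma_\bullet'=36$ and the nine known base lines already account for $6\cdot 2^2+3\cdot 1^2=27$, there is no room for any further base curve; otherwise the movable part would have degree $<9$ and $\deg\Delta<3$, a contradiction. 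No Cremona bookkeeping is needed.

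Your route via restriction to a general plane $\alpha\ni p$ can be made to work, but as written it has two problems. First, the claim that the four edges of the quadrilateral $Q_A$ are fixed components of $\mathfrak{s}$ is false: a sextic double at three collinear points meets their line in $3\cdot 2=6$ points, exactly the degree, so generically the line is \emph{not} a component (and one sees directly from the explicit generators of $\mathcal{S}_\bullet$ that no face $f_i$ is fixed). Second, and more seriously, the assertion that the base locus of $\mathfrak{s}$ is precisely $\{p,A_{01},\dots,A_{23}\}$ is exactly what you flag as ``the main obstacle'', and you do not resolve it. Tracing base points back through three quadratic Cremona maps is delicate (infinitely near points can appear), and you would need to check that none of the intermediate fundamental points produces an unassigned base point of $\mathfrak{s}$. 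The cleanest way to close this gap is in fact the planar analogue of the paper's count: since $\nu_\bullet|_\alpha$ is generically $1{:}1$ onto $\Delta$ (proof of Theorem~\ref{thm: image Cayley cubic}), the free self-intersection of $\mathfrak{s}$ equals $\deg\Delta=3$, while $6^2-3^2-6\cdot 2^2=3$, so the assigned base points are all of them. Once that is established, your sweep-out argument (any extra base curve would trace base points of $\mathfrak{s}$ on every $\alpha$, forcing it into $\bigcup l_{ij}\cup\{p\}$) does finish the job. But at that point you have essentially reproduced the paper's degree argument one dimension down, with extra overhead.
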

\begin{proof}
Let $\Delta$ be the image of $\mathbb{P}^{3}$ via the rational map $\nu_{\bullet}$. Two of its general hyperplane sections intersect each other at $\deg \Delta = 3$ points (see Theorem~\ref{thm: image Cayley cubic}). Let us consider the preimages of these two curves: they are two elements $\Sigma_{\bullet}$ and $\Sigma_{\bullet}'$ of $\mathcal{S}_{\bullet}$, intersecting, outside the base locus of $\mathcal{S}_{\bullet}$, along a nonic curve. Indeed, the intersection of $\Sigma_{\bullet}$ and $\Sigma_{\bullet}'$, outside the base locus of $\mathcal{S}_{\bullet}$, is given by the union of $\deg \Delta = 3$ fibres of $\nu_{\bullet}$, which are cubic plane curves (see Lemma~\ref{lem:contraction of congruence} and Remark~\ref{rem:generalfibreM2}).
The base locus of $\mathcal{S}_{\bullet}$ contains the six edges of $T$ and the three lines $r_1,r_2,r_3$ (see Proposition~\ref{prop:ri contained in base locus}). If another curve existed in the base locus of $\mathcal{S}_{\bullet}$, then $\Sigma_{\bullet}$ would intersect $\Sigma_{\bullet}'$, outside it, along a curve of degree less than $9$, and so $\deg \Delta < 3$, which is a contradiction.
\end{proof}

By using the notations of the proof of Theorem~\ref{thm: image Cayley cubic}, we have the following facts.

\begin{proposition}\label{prop:nodesDelta}
Let $\mathcal{S}_{\bullet}$ be the linear system on $\mathbb{P}^{3}$ given by the sextic surfaces of $\mathbb{P}^{3}$ double along the six edges of the tetrahedron $T$ and triple at the general point $p \in \mathbb{P}^{3}$. Let $\Delta \subset \mathbb{P}^3$ be the Cayley cubic surface given by the image of the rational map $\nu_{\bullet} : \mathbb{P}^{3} \dashrightarrow \mathbb{P}^{3}$ defined by $\mathcal{S}_{\bullet}$. Then the four nodes of $\Delta$ are given by the image via $\nu_{\bullet}$ of the four faces of the tetrahedron $T$. 
\end{proposition}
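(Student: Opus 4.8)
The plan is to prove the statement by combining the behaviour of $\mathcal{S}_{\bullet}$ along the faces of $T$ with the explicit birational chain built in the proof of Theorem~\ref{thm: image Cayley cubic}: first I would show that $\nu_{\bullet}$ contracts each face $f_i$ of $T$ to a single point $P_i\in\Delta$, and then I would identify $P_i$ with a node of $\Delta$ by tracking $f_i$ through the three quadratic transformations used there.

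For the first step, fix $i$ and write $\{j,k,l\}=\{0,1,2,3\}\setminus\{i\}$, so that the plane $f_i$ contains exactly the three edges $l_{ij}$, $l_{ik}$, $l_{il}$ of $T$. A sextic surface with double points along these three coplanar lines meets $f_i$ in a plane sextic divisor containing $2l_{ij}+2l_{ik}+2l_{il}$, which already has degree $6$; hence, unless it contains $f_i$, it cuts out on $f_i$ precisely this fixed divisor. Imposing a triple point at the general point $p\notin f_i$ does not affect the restriction to $f_i$, so every member of $\mathcal{S}_{\bullet}$ not containing $f_i$ restricts to the same divisor $2(l_{ij}+l_{ik}+l_{il})$ on $f_i$. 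Since the general element of $\mathcal{S}_{\bullet}$ is an irreducible sextic surface — being the $\nu_{\bullet}$-preimage of a general, hence irreducible, hyperplane section of the irreducible surface $\Delta$ — it does not contain $f_i$; therefore $\nu_{\bullet}$ is constant, with some value $P_i$, on a dense open subset of $f_i$, and $\nu_{\bullet}(f_i)=\{P_i\}$ (that $\nu_{\bullet}$ is a morphism on a dense open of $f_i$ follows from Corollary~\ref{cor:onlybasecurves edgesT and ri}).

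For the second step, take a general plane $\alpha\in\mathcal{A}$ through $p$. By the proof of Theorem~\ref{thm: image Cayley cubic}, $\nu_{\bullet}|_{\alpha} : \alpha\cong\mathbb{P}^{2}\dashrightarrow\Delta\subset\mathbb{P}^{3}$ is the composition of the three quadratic transformations $q_{p,A_{12},A_{03}}$, $q_{p',B_{23},B_{01}}$, $q_{p'',C_{13},C_{02}}$ followed by the map defined by the linear system $\mathfrak{c}$ of cubics through the six vertices of the complete quadrilateral $Q_D$; moreover, the four edges of $Q_A$ — which are exactly the traces $\alpha\cap f_0=\langle A_{01},A_{02},A_{03}\rangle$, $\alpha\cap f_1=\langle A_{01},A_{12},A_{13}\rangle$, $\alpha\cap f_2=\langle A_{02},A_{12},A_{23}\rangle$ and $\alpha\cap f_3=\langle A_{03},A_{13},A_{23}\rangle$ of the four faces of $T$ on $\alpha$ — are carried bijectively onto the four edges of $Q_D$. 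Since the map defined by $\mathfrak{c}$ contracts the four edges of $Q_D$ precisely to the four nodes of the Cayley cubic $\Delta$, each line $\alpha\cap f_i$ is contracted by $\nu_{\bullet}|_{\alpha}$ to a node of $\Delta$; as $\alpha\cap f_i$ meets the dense open subset of $f_i$ on which $\nu_{\bullet}\equiv P_i$, this node is $P_i$. Finally, since the composite correspondence ``face of $T$ $\leftrightarrow$ edge of $Q_A$ $\leftrightarrow$ edge of $Q_D$ $\leftrightarrow$ node of $\Delta$'' is a bijection, the points $P_0,P_1,P_2,P_3$ are distinct and exhaust the four nodes of $\Delta$, which is the claim.

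The main difficulty I expect is the first step, namely making sure that $\nu_{\bullet}$ contracts the whole face $f_i$ to a point, and not merely its section by a general plane through $p$. This rests on the elementary observation that a sextic double along three coplanar lines has no moving part on their common plane, together with the description of the base locus of $\mathcal{S}_{\bullet}$ provided by Corollary~\ref{cor:onlybasecurves edgesT and ri}. Once that is secured, the second step is simply bookkeeping with the quadratic transformations already set up in the proof of Theorem~\ref{thm: image Cayley cubic}.
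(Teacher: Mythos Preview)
Your proof is correct, and its core (step~2) is exactly the paper's argument: restrict to a general plane $\alpha$ through $p$ and track the four edges of $Q_A=\{\alpha\cap f_i\}$ through the three quadratic transformations of Theorem~\ref{thm: image Cayley cubic} until they land on the four nodes of $\Delta$. Your step~1---the direct check that every member of $\mathcal{S}_{\bullet}$ not containing $f_i$ cuts the fixed divisor $2(l_{ij}+l_{ik}+l_{il})$ on $f_i$, so that $\nu_{\bullet}$ contracts the whole face---is not in the paper's terse proof, but it is a welcome addition that cleanly justifies passing from the trace $\alpha\cap f_i$ to the face $f_i$ itself.
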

\begin{proof}
The faces of $T$ intersect a general plane $\alpha\in \mathcal{A}$ along the four edges of the complete quadrilateral $Q_A$. The edges of $Q_A$ are sent by $\mathfrak{s}$ to the edges of $Q_{B}$, which are mapped by $\mathfrak{q}_{5}$ to the edges of $Q_{C}$, which are transformed by $\mathfrak{q}_{4}$ in the edges of $Q_D$, which are finally sent by $\mathfrak{c}$ to the four singular points of $\Delta$. 
\end{proof}

Let us consider the lines $s_i:= \left\langle p,v_i \right\rangle$ joining the point $p\in \mathbb{P}^{3}$ and the vertex $v_i$ of the tetrahedron $T$, for $0\le i\le 3$.

\begin{corollary}
Let $\mathcal{S}_{\bullet}$ be the linear system on $\mathbb{P}^{3}$ given by the sextic surfaces of $\mathbb{P}^{3}$ double along the six edges of the tetrahedron $T$ and triple at the general point $p \in \mathbb{P}^{3}$. Let $\Delta \subset \mathbb{P}^3$ be the Cayley cubic surface given by the image of the rational map $\nu_{\bullet} : \mathbb{P}^{3} \dashrightarrow \mathbb{P}^{3}$ defined by $\mathcal{S}_{\bullet}$. Then the four lines $s_0$, $s_1$, $s_2$, $s_3$ are sent via $\nu_{\bullet}$ to the four nodes of $\Delta\subset\mathbb{P}^{3}$.
\end{corollary}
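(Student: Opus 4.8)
The strategy is to show that each line $s_i$ is contracted by $\nu_{\bullet}$ to a single point and that this point is precisely the node of $\Delta$ given by $\nu_{\bullet}(f_i)$. Since the four faces of $T$ are sent by $\nu_{\bullet}$ to the four (distinct) nodes of $\Delta$ by Proposition~\ref{prop:nodesDelta}, this will prove that $s_0,s_1,s_2,s_3$ are sent to the four nodes.

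First I would prove the contraction by a Bezout count, in the spirit of Lemma~\ref{lem:contraction of congruence}. For a general $\Sigma_{\bullet}\in\mathcal{S}_{\bullet}$ the intersection $\Sigma_{\bullet}\cap s_i$ consists of $6$ points counted with multiplicity. The line $s_i=\left\langle p,v_i\right\rangle$ passes through $p$, which is a triple point of $\Sigma_{\bullet}$, and through the vertex $v_i$, which is a triple point of every member of $\mathcal{S}$, hence of $\mathcal{S}_{\bullet}$; thus $\Sigma_{\bullet}\cap s_i=3p+3v_i$ as divisors on $s_i\cong\mathbb{P}^1$. Both $p$ and $v_i$ lie in the base locus of $\mathcal{S}_{\bullet}$, while $s_i$ is not one of the base curves: by Corollary~\ref{cor:onlybasecurves edgesT and ri} these are the six edges of $T$ (none of which passes through $p$) and the lines $r_1,r_2,r_3$ (none of which, for general $p$, passes through a vertex of $T$), so none of them equals $s_i$. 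Hence the restriction of $\mathcal{S}_{\bullet}$ to $s_i$ has fixed divisor $3p+3v_i$ of degree $6$ and trivial moving part, and $\nu_{\bullet}$ contracts $s_i$ to a point $P_i$.

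To identify $P_i$, I would intersect $s_i$ with the face $f_i$. Since $v_i\notin f_i$ and, for general $p$, also $p\notin f_i$, the line $s_i$ is not contained in $f_i$ and meets it in one point $w_i:=s_i\cap f_i$; in the coordinates with $v_i=\left[0:\cdots:1:\cdots:0\right]$ this $w_i$ is the point of $f_i=\{s_i=0\}$ whose remaining coordinates are those of $p$, hence a general point of $f_i$. A short verification shows $w_i$ avoids the base locus of $\mathcal{S}_{\bullet}$ — only its $i$-th coordinate vanishes, so it lies on no edge of $T$; it is neither a vertex nor the point $p$; and, for general $p$, it lies on none of $r_1,r_2,r_3$. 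Therefore $\nu_{\bullet}$ is defined at $w_i$. On one hand $\nu_{\bullet}(w_i)=P_i$, since $w_i\in s_i$ and $s_i$ is contracted to $P_i$; on the other hand $\nu_{\bullet}(w_i)=\nu_{\bullet}(f_i)$, since $w_i\in f_i\setminus\mathrm{Bs}(\mathcal{S}_{\bullet})$ and $f_i$ is contracted by $\nu_{\bullet}$ to the node $\nu_{\bullet}(f_i)$ (Proposition~\ref{prop:nodesDelta}). Hence $P_i=\nu_{\bullet}(f_i)$ is a node of $\Delta$, and as the Cayley cubic has exactly the four distinct nodes $\nu_{\bullet}(f_0),\dots,\nu_{\bullet}(f_3)$, the lines $s_0,\dots,s_3$ are sent to the four nodes of $\Delta$.

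The argument is short once one observes that $s_i$ meets $f_i$ away from the base locus; the only point requiring care is the base-locus bookkeeping — checking that $s_i$ is not among the base curves and that $w_i$ is not a base point — which is handled by Corollary~\ref{cor:onlybasecurves edgesT and ri} together with the genericity of $p$. I do not expect any serious obstacle.
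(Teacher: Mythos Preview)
Your proposal is correct and follows essentially the same approach as the paper: a Bezout count shows $\Sigma_{\bullet}\cap s_i$ consists only of the base points $3p+3v_i$, so $s_i$ is contracted, and then intersecting $s_i$ with the opposite face $f_i$ identifies the image as the node $\nu_{\bullet}(f_i)$ via Proposition~\ref{prop:nodesDelta}. Your version is slightly more careful than the paper's in explicitly verifying, via Corollary~\ref{cor:onlybasecurves edgesT and ri} and the genericity of $p$, that $s_i$ is not a base curve and that $w_i=s_i\cap f_i$ lies outside the base locus.
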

\begin{proof}
By Bezout's Theorem, a general sextic surface $\Sigma_{\bullet}\in \mathcal{S}_{\bullet}$ intersects each of the four lines at $6$ points. We also observe that $\Sigma_{\bullet}$ and each of these lines have in common, in the base locus of $\mathcal{S}_{\bullet}$, the point $p$ and a vertex of $T$, which are triple points for $\Sigma_{\bullet}$. Hence, outside the base locus, we have that $\Sigma_{\bullet}\cap s_i$ is given by $6 - 3 - 3 = 0$ points, for all $0\le i\le 3$. So the four lines $s_0$, $s_1$, $s_2$, $s_3$ are contracted by $\nu_{\bullet}$ to four points. Let us fix now $0\le i\le 3$. We have that $s_i$ intersects at a point the face of $T$ opposite to the vertex $v_i$. Hence the point to which the line $s_i$ is sent by $\nu_{\bullet}$ is the same point to which 
the face $f_i$
is sent by $\nu_{\bullet}$, that is one of the four nodes of $\Delta$ (see Proposition~\ref{prop:nodesDelta}). 
\end{proof}

The surfaces of the linear system $\mathcal{S}_{\bullet}$ will now be the subject of our analysis. First let us recall some facts about the surfaces of the linear system $\mathcal{S}$.

\begin{definition}\label{def:ordinarysingularities}
A surface of $\mathbb{P}^3$ has \textit{ordinary singularities} if it has at most the following singularities: a curve $\gamma$ of double points (that are generically the transverse intersection of two branches) with at most finitely many pinch points, and with $\gamma$ having at most finitely many triple points as singularities, with three independent tangent lines, which are triple points also for the surface. 
\end{definition}

\begin{remark}\label{rem:classicalcase}
Let us blow-up $\mathbb{P}^3$ at the vertices of $T$, obtaining a smooth threefold $Y'$ and a birational morphism $bl' : Y'\to \mathbb{P}^{3}$ with exceptional divisors
$E_i := (bl')^{-1}(v_i)$ for $0\le i \le 3$. 
If $H$ denotes the pullback on $Y'$ of the hyperplane class on $\mathbb{P}^{3}$,
the strict transform of an element of $\mathcal{S}$ is linearly equivalent to $6H-3\sum_{i=0}^{3}E_i$.
Let us blow-up the strict transforms $\widetilde{l}_{ij}$ of the edges of $T$, for $0\le i<j\le 3$: we obtain a smooth threefold $Y''$ and a birational morphism $bl'' : Y'' \to Y'$ with exceptional divisors 
$F_{ij}:=(bl'')^{-1}(\widetilde{l}_{ij})$.
Let $\Sigma''$ be the strict transform on $Y''$ of a general element $\Sigma \in \mathcal{S}$: it is linearly equivalent to $6H-3\sum_{i=0}^3 \widetilde{E}_i-2\sum_{0\le i < j \le 3} F_{ij}$, where $\widetilde{E}_i$ is the strict transform of $E_i$, for $0\le i\le 3$, and $H$ denotes the pullback $bl''^* H$, by abuse of notation. 
We have that
$\Sigma''$ is smooth, since it is the blow-up of a surface $\Sigma\in\mathcal{S}$ with ordinary singularities along its singular curves (see \cite[p.620-621]{GH}).
Let us take the following exact sequence
$$0 \to \mathcal{O}_{Y''}(K_{Y''})  \to \mathcal{O}_{Y''}(K_{Y''} + \Sigma'') \to \mathcal{O}_{\Sigma''}(K_{\Sigma''}) \to 0,$$
where $K_{Y''}+\Sigma''\sim 2H - \sum_{i=0}^3\widetilde{E}_i - \sum_{0\le i<j \le 3}F_{ij}$ (see \cite[p.187]{GH}).
We have that $h^{i=0,1,2}(Y'' ,\mathcal{O}_{Y''}(K_{Y''})) = 0$ and $h^3(Y'',\mathcal{O}_{Y''}(K_{Y''})) = 1$ by Serre Duality, since $Y''$ is a rational smooth threefold by construction; furthermore, we have that
$$h^0(\Sigma'', \mathcal{O}_{\Sigma''}(K_{\Sigma''})) = p_g(\Sigma'') =0,\, h^1(\Sigma'', \mathcal{O}_{\Sigma''}(K_{\Sigma''})) =  h^1(\Sigma'', \mathcal{O}_{\Sigma''}) = q(\Sigma'') = 0$$ 
and $h^2(\Sigma'' , \mathcal{O}_{\Sigma''}(K_{\Sigma''})) =  h^0(\Sigma'' , \mathcal{O}_{\Sigma''})= 1$ by Serre Duality, since it is known that the desingularization of a sextic surface in $\mathcal{S}$ is birational to an Enriques surface (see \cite[p.275]{CoDo89}).
So we obtain $h^0(Y'',\mathcal{O}_{Y''}(K_Y''+\Sigma'')) = h^0(\Sigma'', \mathcal{O}_{\Sigma''}(K_{\Sigma''})) = 0$, i.e. there are no quadric surfaces of $\mathbb{P}^{3}$ containing the edges of $T$. 
We also have that 
$h^{1}(Y'',\mathcal{O}_{Y''}(K_Y''+\Sigma'')) = h^1(\Sigma'', \mathcal{O}_{\Sigma''}(K_{\Sigma''})) = 0.$ 
\end{remark}

In our case, first we blow-up $\mathbb{P}^{3}$ at the vertices of $T$, at the point $p$ and at the six points $r_i\cap l_{0i}$, $r_i \cap l_{jk}$, for $i,j,k\in\{1,2,3\}$ with $j<k$ and $j,k\ne i$. In this way we obtain a smooth threefold $X'$ and a birational morphism $bl' : X' \to \mathbb{P}^3$ with exceptional divisors
$$E_h = bl'^{-1}(v_h), \quad E_p = bl'^{-1}(p), \quad E_i' = bl'^{-1}(r_i\cap l_{0i}), \quad E_i'' = bl'^{-1}(r_i\cap l_{jk}),$$
where $0\le h\le 3$.
Let us denote by $\widetilde{l}_{0i}$, $\widetilde{l}_{jk}$ and $\widetilde{r}_i$, respectively, the strict transforms of the lines $l_{0i}$, $l_{jk}$ and $r_i$. Then we blow-up $X'$ along these objects. We obtain a smooth threefold $X''$ and a birational morphism $bl'' : X'' \to X'$, with exceptional divisors 
$$F_{0i} = bl''^{-1}(\widetilde{l}_{0i}), \quad F_{jk} = bl''^{-1}(\widetilde{l}_{jk}), \quad R_i = bl''^{-1}(\widetilde{r}_i).$$
Furthermore, let us denote by $\widetilde{E}_h$, $\widetilde{E}_p$, $\widetilde{E}_i'$, $\widetilde{E}_i''$, respectively the strict transforms of $E_h$, $E_p$, $E_i'$, $E_i''$. 
We denote by $H$ the pullback of a general plane of $\mathbb{P}^{3}$ via the birational morphism $bl'\circ bl'' : X'' \to \mathbb{P}^{3}$.
Then the strict transform $\Sigma_{\bullet}''$ of an element $\Sigma_{\bullet} \in \mathcal{S}_{\bullet}$, via the blow-ups $bl'\circ bl'' : X'' \to \mathbb{P}^{3}$, is
$$ \Sigma_{\bullet}'' \sim 6H -3\widetilde{E}_p-\sum_{i=0}^3 3\widetilde{E}_i -\sum_{i=1}^{3}2\widetilde{E}_i' - \sum_{i=1}^{3}2\widetilde{E}_i'' - \sum_{0\le i<j\le 3}2F_{ij} - \sum_{i=1}^{3}R_i.$$

\begin{remark}\label{rem:KX}
The anticanonical divisor of $X''$ is linearly equivalent to the strict transform of a quartic surface of $\mathbb{P}^{3}$ with double points at the vertices of $T$ and at the point $p$ and containing the six edges of $T$ and the three lines $r_1,r_2,r_3$, i.e.
$$ K_{X''} \sim - 4H + 2\widetilde{E}_p+ 2\sum_{i=0}^3\widetilde{E}_i + \sum_{i=1}^{3}2\widetilde{E}_i' + \sum_{i=1}^{3}2\widetilde{E}_i'' + \sum_{0\le i<j\le 3}F_{ij} + \sum_{i=1}^{3}R_i$$
(see \cite[p.187]{GH}). Then we have $K_{X''}+ \Sigma_{\bullet}'' \sim  2H - \widetilde{E}_p -\sum_{i=0}^3\widetilde{E}_i-\sum_{0\le i<j\le 3}F_{ij}.$
Since there are no quadric surfaces of $\mathbb{P}^{3}$ containing the edges of $T$ (see Remark~\ref{rem:classicalcase}), there are also no quadric surfaces of $\mathbb{P}^{3}$ containing the edges of $T$ and the point $p$. So we obtain $h^0(X'', \mathcal{O}_{X''}(K_{X''}+\Sigma_{\bullet}''))=0$.
\end{remark}

\begin{theorem}\label{thm:propertiesSigma*}
Let $\mathcal{S}_{\bullet}$ be the linear system on $\mathbb{P}^{3}$ given by the sextic surfaces of $\mathbb{P}^{3}$ double along the six edges of the tetrahedron $T$ and triple at the general point $p \in \mathbb{P}^{3}$. The strict transform $\Sigma_{\bullet}''$ on $X''$ of a general element $\Sigma_{\bullet}\in \mathcal{S}_{\bullet}$, via the blow-ups described above, is a smooth surface with $p_{g}(\Sigma_{\bullet}'') =0$, $q(\Sigma_{\bullet}'') =1$ and $p_{a}(\Sigma_{\bullet}'') =-1$.
\end{theorem}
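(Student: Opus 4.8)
The plan is to run, on the threefold $X''$, the argument used for the Enriques surfaces in Remark~\ref{rem:classicalcase}, the one new ingredient being the extra condition imposed at the general point $p$. First I would verify that $\Sigma_\bullet''$ is a smooth irreducible surface. By Corollary~\ref{cor:onlybasecurves edgesT and ri} the base curves of $\mathcal{S}_\bullet$ are only the six edges of $T$ and the lines $r_1,r_2,r_3$, so away from them a general $\Sigma_\bullet$ is smooth by Bertini. Along the edges and at the vertices of $T$ a general $\Sigma_\bullet$ has exactly the ordinary singularities of a general member of $\mathcal{S}$ (a double curve with transverse branches, with ordinary triple points at the vertices), hence the blow-ups $bl'\circ bl''$ resolve these as in Remark~\ref{rem:classicalcase}; at $p$ the surface $\Sigma_\bullet$ has an ordinary triple point, whose tangent cone is a cone over a smooth plane cubic through the three tangent directions of $r_1,r_2,r_3$, which a single blow-up of $p$ resolves, and the subsequent blow-ups of the smooth curves $\widetilde r_i$ (which meet the strict transform of $\Sigma_\bullet$ transversally) do not affect smoothness. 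The genuinely local work is at the six points $r_i\cap l_{0i}$ and $r_i\cap l_{jk}$, where a base line crosses a double edge: a coordinate computation in a neighbourhood of such a point should show that blowing up the point, and then the strict transforms of the edge and of $r_i$ in this order, separates the branches and leaves a smooth strict transform. I expect this last check — the local analysis at the six points — to be the main obstacle; it could also be confirmed with Macaulay2 in the spirit of Remark~\ref{rem:generalfibreM2}.

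Granting smoothness, since $X''$ is a smooth rational threefold one has $h^0(K_{X''})=h^1(K_{X''})=h^2(K_{X''})=0$ and $h^3(K_{X''})=1$ by Serre duality. I would then use the adjunction sequence
$$0 \to \mathcal{O}_{X''}(K_{X''}) \to \mathcal{O}_{X''}(K_{X''}+\Sigma_\bullet'') \to \mathcal{O}_{\Sigma_\bullet''}(K_{\Sigma_\bullet''}) \to 0 .$$
Since $h^0(\mathcal{O}_{X''}(K_{X''}+\Sigma_\bullet''))=0$ by Remark~\ref{rem:KX}, the long exact sequence forces $p_g(\Sigma_\bullet'')=h^0(\mathcal{O}_{\Sigma_\bullet''}(K_{\Sigma_\bullet''}))=0$; and, using $h^1(K_{X''})=h^2(K_{X''})=0$ and Serre duality on $\Sigma_\bullet''$, it gives
$$q(\Sigma_\bullet'') = h^1(\mathcal{O}_{\Sigma_\bullet''}) = h^1(\mathcal{O}_{\Sigma_\bullet''}(K_{\Sigma_\bullet''})) = h^1\bigl(X'',\mathcal{O}_{X''}(K_{X''}+\Sigma_\bullet'')\bigr) .$$
It therefore remains to show that this last group is one-dimensional; the arithmetic genus then follows, since $p_a(\Sigma_\bullet'')=\chi(\mathcal{O}_{\Sigma_\bullet''})-1 = p_g(\Sigma_\bullet'')-q(\Sigma_\bullet'') = -1$ once $\Sigma_\bullet''$ is known to be connected.

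For the last step I would exploit that, by Remark~\ref{rem:KX},
$$K_{X''}+\Sigma_\bullet'' \sim 2H - \widetilde E_p - \sum_{i=0}^{3}\widetilde E_i - \sum_{0\le i<j\le 3}F_{ij}$$
differs from the class $D' := 2H - \sum_{i=0}^{3}\widetilde E_i - \sum_{0\le i<j\le 3}F_{ij}$ — the ``classical'' quadric system through the six edges, with no condition at $p$ — only by $\widetilde E_p$. Because $p$ is general (hence off the edges and distinct from the vertices), the classes $H$, $\widetilde E_i$ and $F_{ij}$ all restrict trivially to $\widetilde E_p$, so $\mathcal{O}_{X''}(D')|_{\widetilde E_p}\cong\mathcal{O}_{\widetilde E_p}$ and there is an exact sequence
$$0 \to \mathcal{O}_{X''}(K_{X''}+\Sigma_\bullet'') \to \mathcal{O}_{X''}(D') \to \mathcal{O}_{\widetilde E_p} \to 0 .$$
Pushing $\mathcal{O}_{X''}(D')$ down to $\mathbb{P}^3$ — all the exceptional divisors over $p$, over the six points and over the $\widetilde r_i$ occur in $D'$ with coefficient zero, so the higher direct images vanish and one recovers $\mathcal{I}_C(2)$ for $C$ the union of the six edges of $T$ — one gets $h^0(\mathcal{O}_{X''}(D'))=h^1(\mathcal{O}_{X''}(D'))=0$ exactly as in Remark~\ref{rem:classicalcase}, and similarly $H^2(\mathcal{O}_{X''}(D'))\cong H^1(\mathbb{P}^3,\mathcal{O}_C(2))=0$ (the degree-$12$ line bundle $\mathcal{O}_C(2)$ on the curve $C$, which has arithmetic genus $3$, has no $H^1$). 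Since $\widetilde E_p$ is a rational surface — namely $\mathbb{P}^2$ blown up at the three points $\widetilde r_i\cap\widetilde E_p$ — one has $h^0(\mathcal{O}_{\widetilde E_p})=1$ and $h^1(\mathcal{O}_{\widetilde E_p})=h^2(\mathcal{O}_{\widetilde E_p})=0$, so the long exact sequence collapses to $0\to 0\to 0\to\mathbb{C}\to H^1(X'',\mathcal{O}_{X''}(K_{X''}+\Sigma_\bullet''))\to 0$. Hence $q(\Sigma_\bullet'')=1$; moreover the same sequence gives $H^2(X'',\mathcal{O}_{X''}(K_{X''}+\Sigma_\bullet''))\hookrightarrow H^2(\mathcal{O}_{X''}(D'))=0$, so $h^1(X'',\mathcal{O}_{X''}(-\Sigma_\bullet''))=0$ by Serre duality and $\Sigma_\bullet''$ is connected. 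Combining the three steps yields $p_g(\Sigma_\bullet'')=0$, $q(\Sigma_\bullet'')=1$ and $p_a(\Sigma_\bullet'')=-1$, as claimed.
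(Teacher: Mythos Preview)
Your argument is essentially the paper's own: the same adjunction sequence on $X''$, the same appeal to Remark~\ref{rem:KX} for $p_g=0$, and the same short exact sequence comparing $K_{X''}+\Sigma_\bullet''$ with $D'=2H-\sum_i\widetilde E_i-\sum_{i<j}F_{ij}$ (whose $h^{0}$ and $h^{1}$ vanish by Remark~\ref{rem:classicalcase}) modulo the exceptional divisor over $p$, yielding $q=1$.

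The one noticeable difference is in the smoothness argument at $p$. You assert that the tangent cone is a cone over a \emph{smooth} plane cubic, so that a single blow-up resolves it; the paper instead shows, via a Macaulay2 computation (Code~\ref{code:castelnuovoClassical}), that the linear system of tangent-cone cubics on $E_p$ has base locus exactly the three points $E_p\cap\widetilde r_i$, so that $|\Sigma_\bullet''|\big|_{\widetilde E_p}$ is base-point free and Bertini applies directly on $\widetilde E_p$. Your formulation is not wrong, but it implicitly needs the same computational input to justify smoothness of the general cubic at those three specific tangent directions. As for the six crossing points $r_i\cap l_{0i}$, $r_i\cap l_{jk}$ that you flag as the main obstacle, the paper simply asserts that the edge-and-vertex resolution is ``known'' and focuses only on $p$; your caution here is reasonable, though the paper does not elaborate. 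Your extra step deducing connectedness from $H^2(\mathcal{O}_{X''}(D'))=0$ is a nice piece of rigor that the paper omits.
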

\begin{proof}
It is known that the blow-ups $bl'\circ bl'' : X'' \to \mathbb{P}^3$ solve the singularities of a general $\Sigma_{\bullet}\in \mathcal{S}_{\bullet}\subset \mathcal{S}$ at the vertices of the tetrahedron $T$ and along its edges. In order to obtain the smoothness of the strict transform $\Sigma_{\bullet}''$ on $X''$ of $\Sigma_{\bullet}$, it remains to show that $bl'\circ bl'' : X'' \to \mathbb{P}^3$ also solves the triple point $p$ of $\Sigma_{\bullet}$. By Bertini's Theorem, it is sufficient to prove that the linear system $|\Sigma_{\bullet}''|$ is base point free on $\widetilde{E}_p$. We recall that $\widetilde{E}_p$ is the blow-up of the plane $E_p\cong \mathbb{P}^2$ at the three points $E_p\cap \widetilde{r}_1$, $E_p\cap \widetilde{r}_2$, $E_p\cap \widetilde{r}_3$.
We also recall that $\Sigma_{\bullet}' \cap E_p= \mathbb{P}(TC_p\Sigma_{\bullet})$, where $\Sigma_{\bullet}':=bl''(\Sigma_{\bullet}'')$ and where $TC_p\Sigma_{\bullet}$ denotes the tangent cone to $\Sigma_{\bullet}$ at $p$. Thanks to a computational analysis via Macaulay2, we find that $\mathbb{P}(TC_p\Sigma_{\bullet})$ is a cubic plane curve passing through the points $E_p\cap \widetilde{r}_1$, $E_p\cap \widetilde{r}_2$, $E_p\cap \widetilde{r}_3$ (see Code~\ref{code:castelnuovoClassical} of Appendix~\ref{app:code}).
In particular we have that $|\Sigma_{\bullet}''|$ cuts on $\widetilde{E}_p$ the strict transform via $bl''|_{\widetilde{E}_p} : \widetilde{E}_p \to E_p$ of a linear system of cubic curves on $E_p$ whose base points are only the points $E_p\cap \widetilde{r}_1$, $E_p\cap \widetilde{r}_2$, $E_p\cap \widetilde{r}_3$ (see Code~\ref{code:castelnuovoClassical} of Appendix~\ref{app:code}). Thus, $|\Sigma_{\bullet}''| |_{\widetilde{E}_p}$ is base point free and so $\Sigma_{\bullet}''$ is smooth.
By using the adjunction formula we have the following exact sequence
$$0 \to \mathcal{O}_{X''}(K_{X''})  \to \mathcal{O}_{X''}(K_{X''} + \Sigma_{\bullet}'') \to \mathcal{O}_{\Sigma_{\bullet}''}(K_{\Sigma_{\bullet}''}) \to 0.$$
Since $X''$ is a smooth rational threefold, we have that $h^{i=0,1,2} (X'', \mathcal{O}_{X''}(K_{X''})) = 0$ by Serre Duality.
Then we obtain
$$p_g (\Sigma_{\bullet}'') = h^0 (\Sigma_{\bullet}'', \mathcal{O}_{\Sigma_{\bullet}''}(K_{\Sigma_{\bullet}''})) = h^0(X'', \mathcal{O}_{X''}(K_{X''}+\Sigma_{\bullet}''))= 0$$
(see Remark~\ref{rem:KX}). Furthermore, we have that
$$q (\Sigma_{\bullet}'') = h^1 (\Sigma_{\bullet}'', \mathcal{O}_{\Sigma_{\bullet}''}) = h^1 (\Sigma_{\bullet}'', \mathcal{O}_{\Sigma_{\bullet}''} (K_{\Sigma_{\bullet}''})) = h^1 (X'', \mathcal{O}_{X''}(K_{X''} + \Sigma_{\bullet}'')).$$ 
In order to verify that the last value is equal to $1$, we observe that the strict transform on $X''$ of a quadric surface of $\mathbb{P}^{3}$ containing the edges of $T$ is linearly equivalent to $2H - \sum_{i=0}^3 \widetilde{E}_i - \sum_{0\le i<j \le 3}F_{ij}$.
By Remark~\ref{rem:KX} we have the following exact sequence
$$0 \to \mathcal{O}_{X''}(K_{X''}+\Sigma_{\bullet}'')  \to \mathcal{O}_{X''}(2H - \sum_{i=0}^3 \widetilde{E}_i - \sum_{0\le i<j \le 3}F_{ij}) \to \mathcal{O}_{E_p} \to 0.$$
Since
$h^{i=0,1}(X'', \mathcal{O}_{X''}(2H - \sum_{i=0}^3 \widetilde{E}_i - \sum_{0\le i<j \le 3}F_{ij})) = 0$
(see Remark~\ref{rem:classicalcase}), then 
$h^1 (X'', \mathcal{O}_{X''}(K_{X''} + \Sigma_{\bullet}'')) =h^0(E_p, \mathcal{O}_{E_p})=h^0(\mathbb{P}^{2}, \mathcal{O}_{\mathbb{P}^{2}})=1.$
Finally, by the Riemann-Roch theorem we have that $p_{a}(\Sigma_{\bullet}'') = p_{g}(\Sigma_{\bullet}'')- q(\Sigma_{\bullet}'') = -1.$
\end{proof}

Let us now recall some definitions. Let $R$ be a smooth surface and $\Gamma$ a smooth, irreducible curve. We say that $R$ is a \textit{ruled surface over} $\Gamma$ if there is a surjective morphism $f : R \to \Gamma$ such that, for a general point $x\in \Gamma$, we have that $f^{-1}(x)$ is isomorphic to $\mathbb{P}^{1}$. 
It is equivalent to saying that $R$ is birational to $\Gamma \times \mathbb{P}^{1}$ (see \cite[Theorem III.4]{Beau78}). Furthermore, we say that a smooth variety $Z$ is \textit{uniruled} if it is covered by a family of rational curves. More precisely, $Z$ is a uniruled variety if there is a variety $K$ with $\dim K = \dim Z -1$ and there is a dominant rational map $ K \times \mathbb{P}^{1} \dashrightarrow Z$. Every uniruled variety $Z$ has Kodaira dimension $\kappa (Z) = -\infty$.

\begin{theorem}\label{thm:elliptic ruled surface}
Let $\mathcal{S}_{\bullet}$ be the linear system on $\mathbb{P}^{3}$ given by the sextic surfaces of $\mathbb{P}^{3}$ double along the six edges of the tetrahedron $T$ and triple at the general point $p \in \mathbb{P}^{3}$. 
Then the strict transform $\Sigma_{\bullet}''$ of a general element $\Sigma_{\bullet}\in \mathcal{S}_{\bullet}$, via the blow-ups described above, is an elliptic ruled surface.
\end{theorem}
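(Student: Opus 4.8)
The plan is to combine the geometric picture of $\mathcal{S}_{\bullet}$ built up in this section with the Enriques--Kodaira classification. By Theorem~\ref{thm:propertiesSigma*} we already know that $\Sigma_{\bullet}''$ is a smooth surface with $q(\Sigma_{\bullet}'')=1$ and $p_g(\Sigma_{\bullet}'')=0$. Since a minimal surface of Kodaira dimension $-\infty$ is either $\mathbb{P}^{2}$ (which has $q=0$) or a $\mathbb{P}^{1}$-bundle over a smooth curve $B$ (which has $q=g(B)$), the equality $q(\Sigma_{\bullet}'')=1$ forces the minimal model of $\Sigma_{\bullet}''$, \emph{once we know} $\kappa(\Sigma_{\bullet}'')=-\infty$, to be a $\mathbb{P}^{1}$-bundle over an elliptic curve; as $\Sigma_{\bullet}''$ is smooth, it is obtained from this minimal model by blowing up finitely many points, hence it is birational to $\Gamma_{0}\times\mathbb{P}^{1}$ for an elliptic curve $\Gamma_{0}$, i.e.\ it is an elliptic ruled surface in the sense recalled above. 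Thus the whole statement reduces to proving $\kappa(\Sigma_{\bullet}'')=-\infty$, and for this it is enough, by the fact recalled just before the statement, to show that $\Sigma_{\bullet}''$ is uniruled.

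To produce a covering family of rational curves I would use the congruence $\mathcal{V}$ of nodal cubics $\gamma_{\alpha}$. By Theorem~\ref{thm: image Cayley cubic} the image $\Delta:=\nu_{\bullet}(\mathbb{P}^{3})$ is the Cayley cubic surface in $\mathbb{P}^{3}$, and a general $\Sigma_{\bullet}\in\mathcal{S}_{\bullet}$ is, outside the base locus, the preimage $\nu_{\bullet}^{-1}(\Gamma)$ of the general hyperplane section $\Gamma:=\nu_{\bullet}(\Sigma_{\bullet})$ of $\Delta$. Since $\Delta$ has only the four nodes of Proposition~\ref{prop:nodesDelta} as singularities, by Bertini $\Gamma$ is a smooth plane cubic, i.e.\ an elliptic curve (which, via Proposition~\ref{prop:C-E L*}, is consistent with $q(\Sigma_{\bullet}'')=1$). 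For a general point $\gamma\in\Gamma$, hence a general point of $\Delta$, the fibre $\nu_{\bullet}^{-1}(\gamma)$ is the general fibre of $\nu_{\bullet}$, namely an irreducible cubic plane curve $\gamma_{\alpha}$ with a node at $p$ (Lemma~\ref{lem:unique gamma_alpha}, Lemma~\ref{lem:contraction of congruence}, Remark~\ref{rem:generalfibreM2}); such a curve is rational, and it lies in $\Sigma_{\bullet}=\overline{\nu_{\bullet}^{-1}(\Gamma)}$. Hence through a general point of $\Sigma_{\bullet}$ there passes one of the rational curves $\gamma_{\alpha}$, and the same is true on $\Sigma_{\bullet}''$ since the strict transform of $\gamma_{\alpha}$ under $bl'\circ bl''$ is still a rational curve of $\Sigma_{\bullet}''$; equivalently, the curve $\Gamma$ (or its normalization) together with the family $\{\gamma_{\alpha}\}$ provides a dominant rational map $\Gamma\times\mathbb{P}^{1}\dashrightarrow\Sigma_{\bullet}''$. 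Therefore $\Sigma_{\bullet}''$ is uniruled, so $\kappa(\Sigma_{\bullet}'')=-\infty$, and by the first paragraph $\Sigma_{\bullet}''$ is an elliptic ruled surface.

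An essentially equivalent and perhaps more transparent route is to work directly with the fibration: $\nu_{\bullet}$ induces a dominant rational map $\Sigma_{\bullet}''\dashrightarrow\Gamma$ onto the elliptic curve $\Gamma$ whose general fibre is the rational curve $\gamma_{\alpha}$; resolving it to a morphism on a suitable blow-up and applying the Noether--Enriques theorem (\cite[Theorem~III.4]{Beau78}) shows that this blow-up, hence $\Sigma_{\bullet}''$, is birational to $\Gamma\times\mathbb{P}^{1}$, and one concludes as before. In either approach the only genuinely non-formal input is the identification of the general fibre of $\nu_{\bullet}$ with the nodal (hence rational) cubic $\gamma_{\alpha}$ together with the control of the base locus of $\mathcal{S}_{\bullet}$; but this has already been settled in Lemmas~\ref{lem:unique gamma_alpha}--\ref{lem:contraction of congruence}, Remark~\ref{rem:generalfibreM2} and Corollary~\ref{cor:onlybasecurves edgesT and ri}, so the hard part is essentially behind us, and what remains is the Enriques--Kodaira classification applied to a smooth surface with $q=1$, $p_g=0$ and $\kappa=-\infty$.
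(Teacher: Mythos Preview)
Your proposal is correct and follows essentially the same route as the paper: both argue that the general hyperplane section $\Gamma$ of the Cayley cubic $\Delta$ is a smooth elliptic curve, that the fibres of $\nu_{\bullet}$ over points of $\Gamma$ are the rational nodal cubics $\gamma_{\alpha}$, hence $\Sigma_{\bullet}''$ is uniruled with $\kappa=-\infty$, and then conclude by Enriques--Kodaira together with $q(\Sigma_{\bullet}'')=1$ from Theorem~\ref{thm:propertiesSigma*}. Your write-up is somewhat more explicit (and adds the alternative Noether--Enriques phrasing), but the strategy and the ingredients invoked are the same.
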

\begin{proof}
Let us take a general $\Sigma_{\bullet}\in \mathcal{S}_{\bullet}$ and its image $\Gamma:=\nu_{\bullet}(\Sigma_{\bullet})$, which is a general hyperplane section of the Cayley cubic surface $\Delta\subset\mathbb{P}^{3}$. Since $\Delta$ only has isolated singularities, then $\Gamma$ is a smooth elliptic cubic plane curve. Furthermore, $\Sigma_{\bullet}$ is union of $\infty^{1}$ rational cubic plane curves, fibres of $\nu_{\bullet}$, given by the preimages of the $\infty^{1}$ points of $\Gamma$ (see Lemma~\ref{lem:contraction of congruence} and Remark~\ref{rem:generalfibreM2}). So $(\nu_{\bullet}\circ bl''\circ bl') : \Sigma_{\bullet}'' \to \Gamma$ is a uniruled surface. Since $\kappa (\Sigma_{\bullet}'') = -\infty$, we have that $\Sigma_{\bullet}''$ is an irrational elliptic ruled surface by Enriques-Kodaira classification and by Theorem~\ref{thm:propertiesSigma*}. 
\end{proof}

By construction, for a general surface $S_{\bullet}\in \mathcal{L}_{\bullet}$ there exists a unique surface $\Sigma_{\bullet}\in \mathcal{S}_{\bullet}$ such that $S_{\bullet} = \nu_{\bullet} (\Sigma_{\bullet})$. So if we denote by $\phi_{\bullet} : W \dashrightarrow \mathbb{P}^{3}$ the rational map defined by the linear system $\mathcal{L}_{\bullet}$, we have the following commutative diagram
$$\begin{tikzcd}
\mathbb{P}^3 \arrow[d, dashrightarrow, "\nu"] \arrow[dr, dashrightarrow, "\nu_{\bullet}"] & \\
W \arrow[d, hookrightarrow, "\phi_{\mathcal{L}}"]   \arrow[r, dashrightarrow, "\phi_{\bullet}"] & \Delta\subset\mathbb{P}^{3}\\
\mathbb{P}^{13} &
\end{tikzcd}$$
and we obtain Theorem~\ref{thm:MAINthmCONJ} (see Theorems~\ref{thm: image Cayley cubic},~\ref{thm:elliptic ruled surface}). We have thus proved that case (\ref{castelnuovoA}) of Castelnuovo's conjecture occurs for the classical Enriques-Fano threefold and that the consequences stated in Remark~\ref{rem:caseEllipticGamma} are verified.

\section{Consequences for the Prokhorov-Enriques-Fano threefolds}\label{sec: generalization}

It is known that all Enriques surfaces appear as the desingularization of some sextic surface of $\mathbb{P}^{3}$ double along the six edges of a tetrahedron and triple at the four vertices (see \cite[p.275]{CoDo89}).
By using notations of previous sections, we can say that all Enriques surfaces are birational to a surface $\Sigma\in \mathcal{S}$ and so to a hyperplane section of the classical Enriques-Fano threefold.
If we consider an Enriques-Fano threefold $(W, \mathcal{L})$ of genus $13 \le p \le 17$, we can say that a general $S\in \mathcal{L}$ is birational to a hyperplane section of the classical Enriques-Fano threefold. 
In particular, a general element of $\mathcal{L}$ having a triple point at a general point $w\in W$ is birational to a hyperplane section of the classical Enriques-Fano threefold 
with triple point at a point on it.
Let $(W_P^{17},\mathcal{L})$ be the Prokhorov-Enriques-Fano threefold of genus $17$: since a general $S\in \mathcal{L}$ is a general Enriques surface (see proof of \cite[Proposition 4.7]{CDGK20}), then, by Theorem~\ref{thm:MAINthmCONJ}, we obtain Corollary~\ref{cor:castelnuovop17}.

\subsection*{Open questions}
Verifying whether the linear system $\mathcal{L}_{\bullet}$ on the Prokhorov-

-Enriques-Fano threefold of genus $17$ has dimension $7=17-10$ would be an interesting question to address in later studies.
Similarly, understanding what happens on the Prokhorov-Enriques-Fano threefold of genus $13$ would account for a stimulating topic of investigation.

\appendix
\section{Computational analysis and Macaulay2 codes}\label{app:code}

We will essentially use the package \textit{Cremona} of Staglian\`{o} (see \cite{Sta18}) and in particular the following functions, commands and methods:
\textit{toMap}, to construct the rational map defined by a linear system;
\textit{rationalMap}, to construct rational maps between projective varieties;
\textit{image}, to compute the image of a rational map;
\textit{degree}, to compute the degree of a rational map;
\textit{isBirational}, to verify the birationality of a rational map;
\textit{inverseMap}, to compute the inverse of a birational map;
\textit{ideal}, to compute the base locus of a rational map.
We will also use the function \textit{tangentCone}, to compute the tangent cone to an affine variety at the origin, and the following standard functions: \textit{associatedPrimes}, to compute the irreducible components of a variety; \textit{jacobian}, to compute the Jacobian matrix of the generators of an ideal; \textit{minors}, to compute the ideal generated by the minors of a certain order of a given matrix.

\begin{code}\label{code:bayle13}
Let $W_{BS}^{13}$ be the Enriques-Fano threefold of genus $13$ found by Bayle and Sano (see \cite[\S 6.3.2]{Ba94} and \cite[Theorem 1.1 No.14]{Sa95}) and let us use the notation of \S~\ref{sec:EFGENUSgreater13}. Let us illustrate how to show, with a computational approach, that the embedding of $W_{BS}^{13}$ in $\mathbb{P}^{13}$ is the classical Enriques-Fano threefold. First we will explain the strategy to use and then we will give the Macaulay2 code.

We construct $W_{BS}^{13}$ by using the fact that the $\sigma$-invariant multihomogeneous polynomials of multidegree $(2,2,2)$ define the coordinates of the quotient morphism $\pi : X \to W_{BS}^{13} \subset \mathbb{P}^{13}_{\left[w_0:\dots :w_{13}\right]}$, that is
\begin{center}
$\left[x_0:x_1\right] \times \left[y_0:y_1\right] \times \left[z_0:z_1\right]$

$\downmapsto^\pi$

$[ x_0^2y_0^2z_0^2: x_0^2y_0^2z_1^2: x_0^2y_0y_1z_0z_1: x_0^2y_1^2z_0^2: x_0^2y_1^2z_1^2: x_0x_1y_0^2z_0z_1: x_0x_1y_0y_1z_0^2:$

$x_0x_1y_0y_1z_1^2:  x_0x_1y_1^2z_0z_1: x_1^2y_0^2z_0^2: x_1^2y_0^2z_1^2: x_1^2y_0y_1z_0z_1: x_1^2y_1^2z_0^2: x_1^2y_1^2z_1^2 ].$
\end{center}
We find that the threefold $W_{BS}^{13}\subset \mathbb{P}^{13}$ has ideal generated by the following $42$ quadratic polynomials:
\begin{center}
${w}_{10} {w}_{12}-{w}_{9} {w}_{13},\,\,
{w}_{7}{w}_{12}-{w}_{6} {w}_{13},\,\,
{w}_{4} {w}_{12}-{w}_{3} {w}_{13},\,\,
{w}_{1}{w}_{12}-{w}_{0} {w}_{13},\,\,
{w}_{11}^{2}-{w}_{9} {w}_{13},$

${w}_{8}{w}_{11}-{w}_{6} {w}_{13},\,\,
{w}_{7} {w}_{11}-{w}_{5} {w}_{13},\,\,
{w}_{6}{w}_{11}-{w}_{5} {w}_{12},\,\,
{w}_{4} {w}_{11}-{w}_{2} {w}_{13},\,\,
{w}_{3}{w}_{11}-{w}_{2} {w}_{12},$

${w}_{2} {w}_{11}-{w}_{0} {w}_{13},\,\,
{w}_{8}{w}_{10}-{w}_{5} {w}_{13},\,\,
{w}_{6} {w}_{10}-{w}_{5} {w}_{11},\,\,
{w}_{4}{w}_{10}-{w}_{1} {w}_{13},\,\,
{w}_{3} {w}_{10}-{w}_{0} {w}_{13},$

${w}_{2}{w}_{10}-{w}_{1} {w}_{11},\,\,
{w}_{8} {w}_{9}-{w}_{5} {w}_{12},\,\,
{w}_{7}{w}_{9}-{w}_{5} {w}_{11},\,\,
{w}_{4} {w}_{9}-{w}_{0} {w}_{13},\,\,
{w}_{3}{w}_{9}-{w}_{0} {w}_{12},$

${w}_{2} {w}_{9}-{w}_{0} {w}_{11},\,\,
{w}_{1}{w}_{9}-{w}_{0} {w}_{10},\,\,
{w}_{8}^{2}-{w}_{3} {w}_{13},\,\,
{w}_{7}{w}_{8}-{w}_{2} {w}_{13},\,\,
{w}_{6} {w}_{8}-{w}_{2} {w}_{12},$

${w}_{5}{w}_{8}-{w}_{0} {w}_{13},\,\,
{w}_{7}^{2}-{w}_{1} {w}_{13},\,\,
{w}_{6}{w}_{7}-{w}_{0} {w}_{13},\,\,
{w}_{5} {w}_{7}-{w}_{1} {w}_{11},\,\,
{w}_{3}{w}_{7}-{w}_{2} {w}_{8},$

${w}_{2} {w}_{7}-{w}_{1}{w}_{8},\,\,
{w}_{6}^{2}-{w}_{0} {w}_{12},\,\,
{w}_{5} {w}_{6}-{w}_{0}{w}_{11},\,\,
{w}_{4} {w}_{6}-{w}_{2} {w}_{8},\,\,
{w}_{2} {w}_{6}-{w}_{0}{w}_{8},$

${w}_{1} {w}_{6}-{w}_{0} {w}_{7},\,\,
{w}_{5}^{2}-{w}_{0}{w}_{10},\,\,
{w}_{4} {w}_{5}-{w}_{1} {w}_{8},\,\,
{w}_{3} {w}_{5}-{w}_{0}{w}_{8},\,\,
{w}_{2} {w}_{5}-{w}_{0} {w}_{7},\,\,
{w}_{1} {w}_{3}-{w}_{0}{w}_{4},$

${w}_{2}^{2}-{w}_{0} {w}_{4}.$
\end{center}
We project $\mathbb{P}^{13}$ from the $\mathbb{P}^7$ spanned by the eight singular points of $W_{BS}^{13}$: we obtain the rational map  
$\rho: \mathbb{P}^{13} \dashrightarrow \mathbb{P}^{5}$, $\left[w_0 : \dots : w_{13} \right] \mapsto \left[w_2: w_5: w_6: w_7: w_8: w_{11} \right].$ 
In particular the restriction map $\rho|_{ W_{BS}^{13}} : W_{BS}^{13} \dashrightarrow \mathbb{P}^{5}$ is birational onto the image, which is the quartic threefold $Q:=\{t_1t_4  - t_0t_5 = 0,\, t_2t_3  - t_0t_5 = 0\}$ given by the complete intersection of two quadric hypersurfaces of $\mathbb{P}^{5}_{\left[t_0:\dots :t_5\right]}$.
Such a threefold $Q$ is birational to $\mathbb{P}^{3}_{\left[s_0:\dots :s_3\right]}$ via the rational map defined by the linear system of the quadric surfaces passing through the four vertices of the tetrahedron $\{s_0s_1s_2s_3=0\}$, i.e.
$q : \mathbb{P}^{3} \dashrightarrow Q \subset \mathbb{P}^{5}, \quad
\left[s_0 : s_1 : s_2 : s_3\right] \mapsto \left[s_0 s_1 : s_1 s_2 : s_1 s_3 : s_0 s_2 : s_0 s_3 : s_2 s_3\right].$
We can compute the inverse map of $q$, which is the map
$q^{-1} : Q \subset \mathbb{P}^{5} \dashrightarrow\mathbb{P}^{3}$ given by 
$\left[t_0 : t_1 : t_2 : t_3 : t_4 : t_5\right] \mapsto \left[t_3t_4 : t_0 t_5 : t_3t_5 : t_4 t_5\right].$
Thus, we have the birational map
$(q^{-1}\circ \rho |_{W_{BS}^{13}}) : W_{BS}^{13} \subset \mathbb{P}^{13} \dashrightarrow \mathbb{P}^{3}$,  $\left[w_0 : \dots : w_{13}\right] \mapsto \left[w_7 w_8 : w_2 w_{11} : w_7 w_{11} : w_8 w_{11}\right].$ We compute its inverse map $\nu : \mathbb{P}^{3} \dashrightarrow  W_{BS}^{13} \subset \mathbb{P}^{13}$.
We obtain that $\nu$ is the map defined by
$\left[s_0 : s_1 : s_2 : s_3 \right] \mapsto  \left[w_0 : \dots : w_{13}\right],$
where
$w_{0} = {s}_{0} {s}_{1}^{3} {s}_{2} {s}_{3}$,
$w_{1} = {s}_{0}^{2} {s}_{1}^{2} {s}_{2}^{2}$,
$w_{2} = {s}_{0}^{2} {s}_{1}^{2} {s}_{2} {s}_{3}$,
$w_{3} = {s}_{0}^{2} {s}_{1}^{2} {s}_{3}^{2}$,
$w_{4} = {s}_{0}^{3} {s}_{1} {s}_{2} {s}_{3},$
$w_{5} = {s}_{0} {s}_{1}^{2} {s}_{2}^{2} {s}_{3}$,
$w_{6} = {s}_{0} {s}_{1}^{2} {s}_{2} {s}_{3}^{2}$,
$w_{7} = {s}_{0}^{2} {s}_{1} {s}_{2}^{2} {s}_{3}$,
$w_{8} = {s}_{0}^{2} {s}_{1} {s}_{2} {s}_{3}^{2}$,
$w_{9} = {s}_{1}^{2} {s}_{2}^{2} {s}_{3}^{2},$
$w_{10} = {s}_{0} {s}_{1} {s}_{2}^{3} {s}_{3}$,
$w_{11} = {s}_{0} {s}_{1} {s}_{2}^{2} {s}_{3}^{2}$,
$w_{12} = {s}_{0} {s}_{1} {s}_{2} {s}_{3}^{3}$,
$w_{13} = {s}_{0}^{2} {s}_{2}^{2} {s}_{3}^{2}.$
Thus, $\nu$ is the rational map defined by the linear system of the sextic surfaces of $\mathbb{P}^{3}$ double along the six edges of the tetrahedron $\{s_0s_1s_2s_3=0\}$ and so $W_{BS}^{13}\subset \mathbb{P}^{13}$ is the classical Enriques-Fano threefold.

\medskip

\begin{scriptsize}
\begin{verbatim}
Macaulay2, version 1.11
with packages: ConwayPolynomials, Elimination, IntegralClosure, InverseSystems,
               LLLBases, PrimaryDecomposition, ReesAlgebra, TangentCone
i1 : needsPackage "Cremona";
i2 : PP1x = ZZ/10000019[x_0,x_1];
i3 : PP1y = ZZ/10000019[y_0,y_1];
i4 : PP1z = ZZ/10000019[z_0,z_1];
i5 : X = PP1x ** PP1y ** PP1z;
i6 : use X;
i7 : pigreca = rationalMap map(X, ZZ/10000019[w_0..w_13], matrix{{x_0^2*y_0^2*z_0^2, 
     x_0^2*y_0^2*z_1^2, x_0^2*y_0*y_1*z_0*z_1, x_0^2*y_1^2*z_0^2, x_0^2*y_1^2*z_1^2, 
     x_0*x_1*y_0^2*z_0*z_1, x_0*x_1*y_0*y_1*z_0^2, x_0*x_1*y_0*y_1*z_1^2, 
     x_0*x_1*y_1^2*z_0*z_1, x_1^2*y_0^2*z_0^2, x_1^2*y_0^2*z_1^2, x_1^2*y_0*y_1*z_0*z_1, 
     x_1^2*y_1^2*z_0^2, x_1^2*y_1^2*z_1^2}});
i8 : WB13 = image pigreca
i9 : (dim WB13 -1, degree WB13) == (3, 24)
i10 : PP13 = ring WB13;
i11 : P1 = pigreca(ideal{x_1,y_0,z_0});
i12 : P2 = pigreca(ideal{x_1,y_1,z_1});
i13 : P3 = pigreca(ideal{x_0,y_1,z_0});
i14 : P4 = pigreca(ideal{x_0,y_0,z_1});
i15 : P1' = pigreca(ideal{x_0,y_1,z_1});
i16 : P2' = pigreca(ideal{x_0,y_0,z_0});
i17 : P3' = pigreca(ideal{x_1,y_0,z_1});
i18 : P4' = pigreca(ideal{x_1,y_1,z_0});
i19 : proj1 = rationalMap toMap(P1,1,1);
i20 : proj2 = rationalMap toMap(proj1(P2),1,1);
i21 : proj3 = rationalMap toMap(proj2(proj1(P3)),1,1);
i22 : proj4 = rationalMap toMap(proj3(proj2(proj1(P4))),1,1);
i23 : proj5 = rationalMap toMap(proj4(proj3(proj2(proj1(P1')))),1,1);
i24 : proj6 = rationalMap toMap(proj5(proj4(proj3(proj2(proj1(P2'))))),1,1);
i25 : proj7 = rationalMap toMap(proj6(proj5(proj4(proj3(proj2(proj1(P3')))))),1,1);
i26 : proj8 = rationalMap toMap(proj7(proj6(proj5(proj4(proj3(proj2(proj1(P4'))))))),1,1);
i27 : rho = proj1*proj2*proj3*proj4*proj5*proj6*proj7*proj8
i28 : Q = rho(WB13)
i29 : (dim Q -1, degree Q) == (3, 4)
i30 : isBirational((rho|WB13)||Q) == true
i31 : PP5 = ring Q;
i32 : PP3 = ZZ/10000019[t_0..t_3];
i33 : q = rationalMap map(PP3, PP5, matrix{{(gens PP3)_0*(gens PP3)_1, 
      (gens PP3)_1*(gens PP3)_2,(gens PP3)_1*(gens PP3)_3, (gens PP3)_0*(gens PP3)_2, 
      (gens PP3)_0*(gens PP3)_3, (gens PP3)_2*(gens PP3)_3}});
i34 : (image q == Q) == true
i35 : isBirational(q||Q)
i36 : mapP5toP3 = rationalMap map(PP5, PP3, sub(matrix(inverseMap(q||Q)), PP5)) 
i37 : mapWB13toP3 = (rho*mapP5toP3) | WB13;
i38 : (isBirational mapWB13toP3) == true
i39 : nu = rationalMap map( PP3, ring WB13, matrix(inverseMap(mapWB13toP3))) 
i40 : (image nu == WB13) == true
\end{verbatim}
\end{scriptsize}
\end{code}

\begin{code}\label{code:castelnuovoClassical}
Let $\mathcal{S}_{\bullet}$ be the linear system on $\mathbb{P}^{3}$ given by the sextic surfaces of $\mathbb{P}^{3}$ double along the six edges of a tetrahedron $T$ and triple at a general point $p \in \mathbb{P}^{3}$. Let us use the notation of \S~\ref{sec: classical EF3-fold} and in particular let us see the proof of Theorem~\ref{thm:propertiesSigma*}. Let $\Sigma_{\bullet}$ be a general element of $\mathcal{S}_{\bullet}$ and let $\pi$ be a general plane of $\mathbb{P}^3$, that is a plane not containing the point $p$. Thanks to Macaulay2, one can find that the tangent cone to $\Sigma_{\bullet}$ at $p$ is a cone with vertex $p$ over a cubic plane curve on $\pi$ passing through the three points $\pi\cap r_1$, $\pi\cap r_3$ and $\pi\cap r_3$. In particular, by moving the surface $\Sigma_{\bullet}\in \mathcal{S}_{\bullet}$, these cubic cones cut on $\pi$ a linear system of cubic curves whose base locus is given exactly by the three points $\pi\cap r_1$, $\pi\cap r_3$ and $\pi\cap r_3$. Before providing the Macaulay2 code, let us explain the strategy to use: 
\begin{itemize}
\item[(i)] we consider the linear system $\mathcal{S}$ of the sextic surfaces of $\mathbb{P}^3_{\left[s_0:\dots :s_3\right]}$ having double points along the six edges of the tetrahedron $T:=\{s_0s_1s_2s_3=0\}$, which has equation
$$l_0s_0s_1^3s_2s_3+l_1s_0^2s_1^2s_2^2+l_2s_0^2s_1^2s_2s_3+
     l_3s_0^2s_1^2s_3^2+l_4s_0^3s_1s_2s_3+$$
     $$+l_5s_0s_1^2s_2^2s_3
     +l_6s_0s_1^2s_2s_3^2+l_7s_0^2s_1s_2^2s_3+l_8s_0^2s_1s_2s_3^2+$$
     $$+l_9s_1^2s_2^2s_3^2+l_{10}s_0s_1s_2^3s_3+
     l_{11}s_0s_1s_2^2s_3^2+l_{12}s_0s_1s_2s_3^3+l_{13}s_0^2s_2^2s_3^2=0;$$
\item[(ii)] we choose a point $p\in \mathbb{P}^3$ sufficiently general so that, by setting it as a triple point for the surfaces of $\mathcal{S}$, it imposes 10 linearly independent conditions to the coefficients $l_0, \dots ,l_{13}$: in our example we choose $p:=\left[1:1:1:-1\right]$;
\item[(iii)] we find the equation of $\mathcal{S}_{\bullet}$: in our example we have
$$l_{10}(s_0^3s_1s_2s_3-2s_0^2s_1s_2^2s_3+s_0s_1s_2^3s_3+s_0^2s_1^2s_3^2-2s_0s_1^2s_2s_3^2+s_1^2s_2^2s_3^2)+$$
$$+l_{11}(s_0^2s_1^2s_2s_3-s_0^3s_1s_2s_3+s_0^2s_1s_2^2s_3-s_0s_1^2s_2^2s_3-s_0^2s_1s_2s_3^2+s_0s_1^2s_2s_3^2+$$
$$+s_0s_1s_2^2s_3^2-s_1^2s_2^2s_3^2)+l_{12}(s_0^2s_1^2s_2^2+s_0^3s_1s_2s_3+2s_0s_1^2s_2^2s_3+2s_0^2s_1s_2s_3^2+$$
$$+s_1^2s_2^2s_3^2+s_0s_1s_2s_3^3)+l_{13}(s_0s_1^3s_2s_3+2s_0^2s_1s_2^2s_3-s_0^3s_1s_2s_3-2s_0s_1^2s_2^2s_3+$$
$$-2s_0^2s_1s_2s_3^2+2s_0s_1^2s_2s_3^2+s_0^2s_2^2s_3^2-s_1^2s_2^2s_3^2)=0.$$
We see that a general fibre of the rational map defined by $\mathcal{S}_{\bullet}$ is a cubic plane curve with node at $p$ and intersecting each edge of $T$ at a point.
We also recall that the base locus of $\mathcal{S}_{\bullet}$ is given by the union of the six edges of $T$ and by three lines $r_1$, $r_2$, $r_3$ intersecting at $p$ (see Corollary~\ref{cor:onlybasecurves edgesT and ri});
\item[(iv)] we consider a change of coordinates of $\mathbb{P}^{3}$, with respect to which $p$ has coordinates $\left[0: 0: 0: 1\right]$. By abuse of notation let us denote the new coordinates by $\left[s_0:\dots :s_3\right]$. Let $\Sigma_{\bullet}$ be a general element of $\mathcal{S}_{\bullet}$, obtained by fixing general values for $l_{10}, \dots, l_{13}$. The point $p$ can be viewed as the origin of the open affine set $U_0 := \{s_3 \ne 0\}$ and we can find the ideal of the tangent cone $TC_{p}(\Sigma_{\bullet}\cap U_0)$: in our example we obtain
$$(l_{10}-l_{11}+l_{12}-l_{13})s_0^3+(-l_{10}+l_{11}-l_{12}+l_{13})s_0^2s_1-l_{13}s_0s_1^2+l_{13}s_1^3+$$
$$-(l_{10}-l_{11}+l_{12}-l_{13})s_0^2s_2+(2l_{10}-l_{11})s_0s_1s_2-l_{13}s_1^2s_2-l_{10}s_0s_2^2+$$
$$-l_{10}s_1s_2^2+l_{10}s_2^3=0,$$
thus, $TC_p\Sigma_{\bullet}$ is a cone with vertex $p$ over a cubic plane curve on the plane $\pi:=\{s_3=0\}$; 
\item[(v)] by moving $\Sigma_{\bullet}\in \mathcal{S}_{\bullet}$, i.e. by varying the coefficients $l_{10}, \dots , l_{13}$, the cubic cones $TC_p\Sigma_{\bullet}$ identify a linear system of cubic plane curves on $\pi$; we see that the base locus of this linear system is given by the union of the three points $r_1\cap \pi$, $r_2\cap \pi$, $r_3\cap \pi$: we verify this by studying the intersection of the four cubic curves given by $\left[l_{10}:\dots :l_{13}\right] \in\{\left[1:0:0:0\right]$, $\left[0:1:0:0\right]$, $\left[0:0:1:0\right]$, $\left[0:0:0:1\right]\}$.
\end{itemize}

\medskip

\begin{scriptsize}
\begin{verbatim}
Macaulay2, version 1.11
with packages: ConwayPolynomials, Elimination, IntegralClosure, InverseSystems,
               LLLBases, PrimaryDecomposition, ReesAlgebra, TangentCone
i1 : needsPackage "Cremona";
i2 : PP3 = ZZ/10000019[s_0..s_3];
i3 : -- let us take a general point of PP3 with random coordinates:
     -- for i to 3 list random(-5,10)
     -- in our example we take p=[1: 1: 1: -1]
     p = ideal{s_0+s_3,s_1+s_3,s_2+s_3}
i4 : -- let us take the linear system of the sextic surfaces of PP3 
     -- double along the six edges of the coordinate tetrahedron
     R = ZZ/10000019[l_0..l_13][s_0..s_3];
i5 : use R
i6 : Sigma = ideal{l_0*s_0*s_1^3*s_2*s_3+l_1*s_0^2*s_1^2*s_2^2+l_2*s_0^2*s_1^2*s_2*s_3+
     l_3*s_0^2*s_1^2*s_3^2+l_4*s_0^3*s_1*s_2*s_3+l_5*s_0*s_1^2*s_2^2*s_3+l_6*s_0*s_1^2*s_2*s_3^2+
     l_7*s_0^2*s_1*s_2^2*s_3+l_8*s_0^2*s_1*s_2*s_3^2+l_9*s_1^2*s_2^2*s_3^2+l_10*s_0*s_1*s_2^3*s_3
     +l_11*s_0*s_1*s_2^2*s_3^2+l_12*s_0*s_1*s_2*s_3^3+l_13*s_0^2*s_2^2*s_3^2};
i7 : -- for a fixed value of [l_0:..:l_13], we have that Sigma is a hypersurface of PP3
     -- let us find the values for [l_0:..:l_13] in order to have p as triple point for Sigma
     J = jacobian(Sigma);
i8 : JJ = jacobian(J);
i9 : triplelocus = minors(1,J)+minors(1,JJ)+Sigma;
i10 : substitute(triplelocus, {s_0=>1, s_1=>1, s_2=>1, s_3=>-1})
i11 : -- we have the following 10 independent conditions
      substitute(oo,{l_0 => l_13})
i12 : substitute(oo,{l_1 => l_12})
i13 : substitute(oo,{l_2 => l_11})
i14 : substitute(oo,{l_3 => l_10})
i15 : substitute(oo,{l_4 => l_10-l_11+l_12-l_13})
i16 : substitute(oo,{l_5 => -l_11 + 2*l_12 - 2*l_13})
i17 : substitute(oo,{l_6 => -2*l_10+l_11+2*l_13})
i18 : substitute(oo,{l_7 => -2*l_10+l_11+2*l_13})
i19 : substitute(oo,{l_8 => -l_11+2*l_12-2*l_13})
i20 : substitute(oo,{l_9 => l_10-l_11+l_12-l_13})
i21 : -- thus, we let:
      substitute(Sigma,{l_0 => l_13})
i22 : substitute(oo,{l_1 => l_12})
i23 : substitute(oo,{l_2 => l_11})
i24 : substitute(oo,{l_3 => l_10})
i25 : substitute(oo,{l_4 => l_10-l_11+l_12-l_13})
i26 : substitute(oo,{l_5 => -l_11 + 2*l_12 - 2*l_13})
i27 : substitute(oo,{l_6 => -2*l_10+l_11+2*l_13})
i28 : substitute(oo,{l_7 => -2*l_10+l_11+2*l_13})
i29 : substitute(oo,{l_8 => -l_11+2*l_12-2*l_13})
i30 : substitute(oo,{l_9 => l_10-l_11+l_12-l_13})
i31 : -- the linear system of the sextic surfaces of PP3
      -- double along the edges of the coordinate tetrahedron
      -- and triple at the point p has the following equation,
      -- depending on the coefficients l_10,l_11,l_12,l_13
      SigmaTripleAtp = oo
i32 : -- let us find the rational map defined by SigmaTripleAtp
      generator1 = substitute( SigmaTripleAtp, {l_10 =>1, l_11=>0, l_12=>0, l_13=>0})
i33 : generator2 = substitute( SigmaTripleAtp, {l_10 =>0, l_11=>1, l_12=>0, l_13=>0})
i34 : generator3 = substitute( SigmaTripleAtp, {l_10 =>0, l_11=>0, l_12=>1, l_13=>0})
i35 : generator4 = substitute( SigmaTripleAtp, {l_10 =>0, l_11=>0, l_12=>0, l_13=>1})
i36 : PP3' = ZZ/10000019[x_0..x_3]
i37 : sexticsbullet = rationalMap map(PP3,PP3',matrix{{sub(generator1_0,PP3),
      sub(generator2_0,PP3),sub(generator3_0,PP3),sub(generator4_0,PP3)}});
i38 : CayleyCubic = image oo
i39 : (dim oo -1, degree oo) == (2, 3)
i40 : -- let us find the general fibre of sexticsbullet
      gamma = sexticsbullet^*(sexticsbullet(ideal{random(1,PP3),random(1,PP3),random(1,PP3)}))
i41 : (dim oo -1, degree oo) == (1, 3)
i42 : alpha = ideal{gamma_0}
i43 : (dim oo -1, degree oo) == (2, 1)
i44 : (dim(gamma+ideal{(gens PP3)_0,(gens PP3)_1})-1, 
      degree(gamma+ideal{(gens PP3)_0,(gens PP3)_1}))==(0, 1)
i45 : (dim(gamma+ideal{(gens PP3)_0,(gens PP3)_2})-1, 
      degree(gamma+ideal{(gens PP3)_0,(gens PP3)_2}))==(0, 1)
i46 : (dim(gamma+ideal{(gens PP3)_0,(gens PP3)_3})-1, 
      degree(gamma+ideal{(gens PP3)_0,(gens PP3)_3}))==(0, 1)
i47 : (dim(gamma+ideal{(gens PP3)_1,(gens PP3)_2})-1, 
      degree(gamma+ideal{(gens PP3)_1,(gens PP3)_2}))==(0, 1)
i48 : (dim(gamma+ideal{(gens PP3)_1,(gens PP3)_3})-1, 
      degree(gamma+ideal{(gens PP3)_1,(gens PP3)_3}))==(0, 1)
i49 : (dim(gamma+ideal{(gens PP3)_2,(gens PP3)_3})-1, 
      degree(gamma+ideal{(gens PP3)_2,(gens PP3)_3}))==(0, 1)
i50 : (alpha+p == p, gamma+p == p) == (true, true)
i51 : (p == saturate(gamma+minors(2,jacobian(gamma)))) == true
i52 : -- let us find the base locus of SigmaTripleAtp
      associatedPrimes(ideal sexticsbullet)
i53 : -- it is the union of the six edges of T, the point p
      -- and the following three lines r1, r2, r3 intersecting at p
      -- such that ri intersects the edges ideal{s_j,s_k}, ideal{s_0,s_i}
      -- for i,j,k distinct indices in {1,2,3} 
      use PP3
i54 : r1 = ideal{s_2+s_3,s_0-s_1}
i55 : r2 = ideal{s_1+s_3,s_0-s_2}
i56 : r3 = ideal{s_1-s_2,s_0+s_3}
i57 : -- let us find the tangent cone at the point p
      -- to a general sextic surface of the linear system SigmaTripleAtp
      newR = ZZ/10000019[l_10,l_11,l_12,l_13][s_0..s_3];
i58 : -- let us consider the change of coordinates thanks to which
      -- the point p is the point [0:0:0:1]
      -- (by abuse of notation ,let [s_0..s_3] be the new coordinates)
      substitute(SigmaTripleAtp, newR)
i59 : sub(oo, {(gens newR)_0 =>(gens newR)_0-(gens newR)_3, 
      (gens newR)_1=>(gens newR)_1-(gens newR)_3, (gens newR)_2=>(gens newR)_2-(gens newR)_3, 
      (gens newR)_3=>(gens newR)_3});
i60 : sub(oo, {(gens newR)_3 => 1})
i61 : TCp = tangentCone oo
i62 : -- TCp is a cone of vertex p over a cubic plane curve on the plane ideal{s_3}.
      -- By moving the surfaces of the linear system, i.e. by varying the values 
      -- l_10,l_11,l_12,l_13, we obtain a linear system of cubic plane curves on 
      -- ideal{s_3} which only has three base points,
      -- given by the intersection with the three lines r1, r2, r3
      c0 =sub(ideal{sub(TCp,{l_10=>1, l_11=>0, l_12=>0, l_13=>0})},PP3)
i63 : c1 =sub(ideal{sub(TCp,{l_10=>0, l_11=>1, l_12=>0, l_13=>0})},PP3)
i64 : c2 =sub(ideal{sub(TCp,{l_10=>0, l_11=>0, l_12=>1, l_13=>0})},PP3)
i65 : c3 =sub(ideal{sub(TCp,{l_10=>0, l_11=>0, l_12=>0, l_13=>1})},PP3)
i66 : threepts = associatedPrimes(ideal{(gens PP3)_3}+c0+c1+c2+c3)
i67 : threepts#0 == ideal{(gens PP3)_3}+sub(r1, {(gens PP3)_0 =>(gens PP3)_0-(gens PP3)_3,
                    (gens PP3)_1=>(gens PP3)_1-(gens PP3)_3, 
                    (gens PP3)_2=>(gens PP3)_2-(gens PP3)_3, (gens PP3)_3=>(gens PP3)_3})
i68 : threepts#1 == ideal{(gens PP3)_3}+sub(r2, {(gens PP3)_0 =>(gens PP3)_0-(gens PP3)_3, 
                   (gens PP3)_1=>(gens PP3)_1-(gens PP3)_3, 
                   (gens PP3)_2=>(gens PP3)_2-(gens PP3)_3, (gens PP3)_3=>(gens PP3)_3})
i69 : threepts#2 == ideal{(gens PP3)_3}+sub(r3, {(gens PP3)_0 =>(gens PP3)_0-(gens PP3)_3, 
                    (gens PP3)_1=>(gens PP3)_1-(gens PP3)_3, 
                    (gens PP3)_2=>(gens PP3)_2-(gens PP3)_3, (gens PP3)_3=>(gens PP3)_3})
\end{verbatim}
\end{scriptsize}
\end{code}


\begin{thebibliography}{}

\bibitem{Ba94} L. Bayle, \textit{Classification des vari\'{e}t\'{e}s complexes projectives de dimension trois dont une section hyperplane g\'{e}n\'{e}rale est une surface d'Enriques.} J. Reine Angew. Math., 449, 9-63, (1994).

\bibitem{BPV84} W. Barth, C. Peters, A. Van de Ven, \textit{Compact Complex Surfaces}, Springer-Verlag, (1984).

\bibitem{Beau78} A. Beauville, \textit{Surfaces alg\'{e}briques complexes.} Asterisque 54, (1978).

\bibitem{Ca37} G. Castelnuovo, \textit{Memorie scelte}, N. Zanichelli, pp. X-588, (1937).

\bibitem{CE1906} G. Castelnuovo, F. Enriques, \textit{Sur les int \'{e}grales simples de primi\`{e}re esp\`{e}ce d' une surface ou d'une vari\'{e}t\'{e} alg\'{e}brique \`{a} plusieurs dimensions.} Annales scientifiques de l'\'{E}cole Normale Sup\'{e}rieure, 3, 23, (1906). 

\bibitem{Ch96} I. A. Cheltsov, \textit{Singularities of 3-Dimensional Varieties Admitting an Ample Effective Divisor of Kodaira Dimension Zero.} Mathematical Notes, Vol. 59, No. 4, (1996).

\bibitem{CDGK20} C. Ciliberto, T. Dedieu, C. Galati, A. L. Knutsen, \textit{Moduli of curves on Enriques surfaces}, Advances in Mathematics 365 (2020).

\bibitem{CoDo89} F. R. Cossec, I. V. Dolgachev, \textit{Enriques surfaces. I} Progress in Mathematics, 76, Birkh\"{a}user Boston, Inc., Boston, Ma, (1989).

\bibitem{CoMu85} A. Conte, J. P. Murre, \textit{Algebraic varieties of dimension three whose hyperplane sections are Enriques surfaces.} Ann. Scuola Norm. Sup. Pisa Cl. Sci. (4), 12(1):43-80, (1985).

\bibitem{Dolg12} I.V. Dolgachev, \textit{Classical Algebraic Geometry. A Modern View}, Cambridge University Press, Cambridge, (2012)

\bibitem{Fa38} G. Fano, \textit{Sulle variet\`{a} algebriche a tre dimensioni le cui sezioni iperpiane sono superficie di genere zero e bigenere uno.} Memorie societ\`{a} dei XL, 24, 41-66, (1938).

\bibitem{GH} P. Griffiths, J. Harris, \textit{Principles of algebraic geometry}, Reprint of the 1978 original. Wiley Classic Library. John Wiley \& Sons, Inc. , New York, (1994).


\bibitem{KLM11} A.L. Knutsen, A.F. Lopez, R. Munoz, \textit{On the extendability of projective surfaces and a genus bound for Enriques-Fano threefolds}, J. Diff. Geom. 88, 483-518, (2011).


\bibitem{Mi99} T. Minagawa, \textit{Deformations of Q-Calabi-Yau 3-folds and Q-Fano 3-folds of Fano index 1}, J. Math. Sci. Univ. Tokyo, 6(2):397-414, (1999).

\bibitem{Pro07} Yu. G. Prokhorov, \textit{On Fano-Enriques varieties}, Mat. Sb. 198, 117-134, (2007).


\bibitem{Sa95} T. Sano, \textit{On classification of non-Gorenstein} $\mathbb{Q}$-\textit{Fano 3-folds of Fano index 1}, J. Math. Soc. Japan 47, 369-380, (1995).

\bibitem{Sta18} G. Staglian{\`o}, {\textit{A Macaulay2 package for computations with rational maps}, The Journal of Software for Algebra and Geometry}, 8, (2018).

\end{thebibliography}
\end{document}